\definecolor{darkblue}{rgb}{0.0,0.0,0.3}
\theoremstyle{plain}
\newtheorem{thm}{Theorem}[section]
\newtheorem{cor}[thm]{Corollary}
\newtheorem{prop}[thm]{Proposition}
\newtheorem{lem}[thm]{Lemma}
\theoremstyle{definition}
\newtheorem{defn}[thm]{Definition}
\newtheorem{example}[thm]{Example}
\newtheorem{rem}[thm]{Remark}
\newtheorem{question}[thm]{Question}
\newtheorem*{thmA}{Theorem A}
\numberwithin{equation}{section}
\newcommand{\fix}{\operatorname{Fix}}
\newcommand{\tdel}{\widetilde{\delta}}
\newcommand{\re}{\operatorname{Re}}
\newcommand{\matsp}{\operatorname{mat-span}}
\newcommand{\Max}{\operatorname{Max}}
\def\mcc{M\raise.5ex\hbox{c}C}
\def\mccarthy{M\raise.5ex\hbox{c}Carthy}
\newcommand{\cF}{{\mathcal F}}
\newcommand{\cH}{{\mathcal H}}
\newcommand{\cI}{{\mathcal I}}
\newcommand{\cK}{{\mathcal K}}
\newcommand{\cL}{{\mathcal L}}
\newcommand{\cO}{{\mathcal O}}
\newcommand{\cT}{{\mathcal T}}
\newcommand{\fB}{{\mathfrak B}}
\newcommand{\fS}{{\mathfrak S}}
\newcommand{\fV}{{\mathfrak V}}
\newcommand{\fW}{{\mathfrak W}}
\newcommand{\C}{{\mathbb C}}
\newcommand{\R}{{\mathbb R}}
\newcommand{\N}{{\mathbb N}}
\newcommand{\M}{{\mathbb M}}
\newcommand{\D}{{\mathbb D}}
\newcommand{\BB}{{\mathbb B}}
\newcommand{\HH}{{\mathbb H}}
\newcommand{\bA}{\ensuremath{{\mathbf A}}}
\newcommand{\bX}{\ensuremath{{\mathbf X}}}
\newcommand{\bY}{\ensuremath{{\mathbf Y}}}
\newcommand{\bZ}{\ensuremath{{\mathbf Z}}}
\newcommand{\bW}{\ensuremath{{\mathbf W}}}
\newcommand{\GL}{\operatorname{{\mathbf GL}}}
\begin{document}
\title{Iteration theory of noncommutative maps}

\begin{abstract}
This note aims to study the iteration theory of noncommutative self-maps of bounded matrix convex domains. We prove a version of the Denjoy-Wolff theorem for the row ball and the maximal quantization of the unit ball of $\C^d$. For more general bounded matrix convex sets, we prove a version of Wolff's theorem inspired by the results of Abate. Lastly, we use iteration and fixed point theory to generalize the commutative results of Davidson, Ramsey, and Shalit to quotients of the free semigroup algebra by WOT closed ideals. 
\end{abstract}

\author{Serban T. Belinschi}
\address{CNRS - IMT}
\thanks{This work was initiated in November 2019 while STB was a visiting fellow supported by the Faculty of Natural Sciences Distinguished Scientist Visitors Program of the Ben Gurion University of the Negev, Beer-Sheva, Israel.}

\author{Eli Shamovich}
\email{shamovic@bgu.ac.il}
\address{Ben-Gurion University of the Negev}

\maketitle


\section{Introduction}

This note aims to study the iteration theory of noncommutative functions on matrix convex sets and, more generally, noncommutative domains. Noncommutative analysis is a rapidly growing area of research. The origins of noncommutative analysis can be traced back to the works of Takesaki \cite{Takesaki}, Taylor \cite{Taylor-frame, Taylor-ncfunc}, and Voiculescu \cite{Voic-coalgebra, Voic04, Voic10}. Though their motivations varied greatly, the notion of a noncommutative (nc for short) function is apparent in all of these works. Additional impetus to the field was given by results in noncommutative convexity with a view towards applications (see, for example, \cite{DHM17, Hel02, HMV06}). Arveson \cite{Arveson-I} pioneered a different approach to noncommutative convexity. Arveson has generalized the classical notion of Choquet boundary to the setting of operator systems. In particular, Arveson's extension theorem led Wittstock to prove an extension theorem on completely bounded maps and introduce matrix convex sets \cite{Wittstock-HB, Wittstock-survey}. The reader should consult the monograph \cite{DavKen-ncchoquet} and the references therein for more information on noncommutative convexity. A basic introduction to nc domains and functions is given in Section \ref{sec:ncfunc}. We refer the reader to the books \cite{AgMcCYng-book, KVV} for more details on nc function theory.

Classically, open convex domains in $\C^d$ are a primary example of hyperbolic spaces in the sense of Kobayashi \cite{Kobayashi-book}. A fundamental result of Lempert \cite{Lempert-metrique, Lempert-intrinsic} states that the Caratheodory and Kobayashi metrics on a convex bounded domain coincide (see also \cite{Agler-lempert} and \cite{RoydenWong}). This hyperbolic metric is an essential tool in the study of iteration theory and fixed points of holomorphic self-maps of convex domains \cite{Abate-horospheres, Abate-book, AbateRaissy, Bedford, Vigue-geodesique, Vigue-tams}. Nc hyperbolic metrics were introduced by the first author and Vinnikov in \cite{BelVin17}. The definition and basic properties of the nc
Lempert function and hyperbolic metrics appear in Section \ref{sec:hup_metric}.

A particularly nice example of a convex set is the unit ball; in the noncommutative case, it is the row ball. Popescu has extensively studied nc function theory on the row ball (see, for example, \cite{Popescu-funcI, Popescu-hyp_geom, Popescu-aut, Popescu-funcII, Popescu-composition}). Davidson and Pitts \cite{DavPit-inv, DavPit-alg, DavPit-NPint} have studied the free semigroup algebra and, in doing so, studied nc bounded analytic functions on the row ball (see also \cite{3S1,3S2}). In particular, Popescu proved a noncommutative version of Wolff's theorem for the row ball \cite[Theorem 3.1]{Popescu-composition}. A noncommutative version of a fixed point theorem of Rudin \cite{Rudin-ball_book} and Herv\'e \cite{Herve} was obtained by the second author in \cite{Sha18}. The primary motivation for the proof of the latter fixed point theorem was the classification of quotients of the free semigroup algebra by WOT closed ideals up to completely isometric isomorphism \cite{3S1,3S2}. 

In Section \ref{sec:denjoy_wolff}, we prove a version of the Denjoy-Wolff theorem for the row ball (Theorem \ref{thm:nc_denjoy_wolff_ball} and the maximal quantization of the unit ball $\BB_d \subset \C^d$ (Corollary \ref{cor:nc_denjoy_wolff_max}). In particular, this answers a question raised by Popescu in \cite{Popescu-composition} on the Denjoy-Wolff theorem for the row ball. One of the key ingredients of the proof of these results are forms of the nc maximum modulus principle. Popescu obtained one version of the maximum modulus principle for the row ball \cite[Theorem 3.3]{Popescu-funcI}, and a different version (more applicable to the problem at hand) was obtained in \cite[Lemma 6.11]{3S1}. In this paper, we obtain a maximum modulus principle for the maximal quantization of $\BB_d$ (see Theorem \ref{thm:max_modulus_max}). It is interesting whether such a strong form of maximum modulus principle is available for every quantization of the unit ball.

Section \ref{sec:nc_vigue} contains an nc version of the classical Vigu\'{e} and Bedford theorems on iterations of holomorphic functions (Theorems \ref{thm:nc_vigue} and \ref{thm:nc_bedford}). We use these results to obtain an extension of \cite[Theorem 4.1]{Sha18} that is the full nc version of a result of Davidson, Ramsey, and Shalit \cite[Theorem 4.4]{DRS15}.
\begin{thmA}[Theorem \ref{thm:nc_DRS}]
Let $\fV \subset \fB_d$ and $\fW \subset \fB_e$ be two nc analytic varieties. If $H^{\infty}(\fV)$ and $H^{\infty}(\fW)$ are completely isometrically isomorphic, then there exists $k \in \N$ and an automorphism $F\colon \fB_k \to \fB_k$, such that $\fV, \fW \subset \fB_k$ and $F$ maps $\fV$ onto $\fW$.
\end{thmA}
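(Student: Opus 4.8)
The plan is to carry over the scheme behind \cite[Theorem~4.4]{DRS15}, using the iteration theory of Section~\ref{sec:nc_vigue} in place of the rescaling homotopy that is available only for homogeneous varieties. Write the completely isometric isomorphism as $\Phi\colon H^{\infty}(\fW)\to H^{\infty}(\fV)$. The restrictions $z_1|_{\fW},\dots,z_e|_{\fW}$ of the coordinate functions form a row of norm at most $1$ in $H^{\infty}(\fW)$, so $\varphi:=(\Phi(z_1|_{\fW}),\dots,\Phi(z_e|_{\fW}))$ is a row of multipliers on $\fV$ of norm at most $1$, hence an nc map $\varphi\colon\fV\to\overline{\fB_e}$ with $\Phi(g)=g\circ\varphi$ for all $g$; applying the same construction to $\Phi^{-1}$ gives $\psi\colon\fW\to\overline{\fB_d}$, and the nc identity theorem turns $\Phi^{-1}\Phi=\id$ and $\Phi\Phi^{-1}=\id$ into $\psi\circ\varphi=\id_{\fV}$ and $\varphi\circ\psi=\id_{\fW}$. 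The nc maximum modulus principle (\cite[Lemma~6.11]{3S1}) shows $\varphi$ and $\psi$ take values in the open balls, so $\varphi\colon\fV\to\fW$ is an nc biholomorphism; this much is essentially \cite[Theorem~4.1]{Sha18}. Since $H^{\infty}(\fV)$ and $H^{\infty}(\fW)$ are complete quotients of $H^{\infty}(\fB_d)$ and $H^{\infty}(\fB_e)$ by WOT-closed ideals (\cite{DavPit-inv}), the contractive rows defining $\varphi$ and $\psi$ lift to nc maps $\tilde\varphi\colon\fB_d\to\overline{\fB_e}$ and $\tilde\psi\colon\fB_e\to\overline{\fB_d}$ extending them. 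Put $k:=d+e$ and include $\fB_d$ and $\fB_e$ into $\fB_k$ as the first $d$ and last $e$ coordinates; these inclusions are completely isometric on the level of $H^{\infty}$, so henceforth $\fV,\fW$ and the lifts all live in $\fB_k$, and after a harmless rescaling the lifts take values in the open ball.

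Composing $\varphi$ with automorphisms of $\fB_k$ carrying a chosen point of $\fV$ and its $\varphi$-image to the origin --- and conjugating the lifts accordingly --- we may assume $0\in\fV\cap\fW$ and $\varphi(0)=0=\psi(0)$. By the Schwarz lemma for the nc ball (Section~\ref{sec:hup_metric}) the differentials $L:=D\tilde\varphi(0)$ and $M:=D\tilde\psi(0)$ are contractions, and $\psi\circ\varphi=\id$, $\varphi\circ\psi=\id$ force $ML=\id$ on $T_0\fV$ and $LM=\id$ on $T_0\fW$; a two-sided-contraction argument then shows that $L$ restricts to a surjective isometry $T_0\fV\to T_0\fW$. (One can also verify directly that the scaled graph $\{(tz,t\varphi(z)):z\in\fV\}\subset\fB_k$ with $t=2^{-1/2}$ carries an $H^{\infty}$-algebra completely isometrically isomorphic to both $H^{\infty}(\fV)$ and $H^{\infty}(\fW)$, which also indicates why enlarging the ambient ball is unavoidable.)

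The remaining and substantive step is to produce an automorphism of $\fB_k$ carrying $\fV$ onto $\fW$ --- equivalently, to show that after the above normalization $\varphi$ is the restriction of a linear isometry; for then extending $L$ to a unitary $U$ of $\C^k$ (possible since the orthogonal complements of $T_0\fV$ and $T_0\fW$ in $\C^k$ have the same dimension) yields the linear automorphism $z\mapsto zU$ of $\fB_k$, and undoing the earlier conjugations completes the proof. When $\fV,\fW$ are homogeneous this follows at once from the homotopy $z\mapsto t^{-1}\varphi(tz)$, $t\in(0,1]$, joining $\varphi$ to $L$; in general $tz\notin\fV$ and the homotopy is lost. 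Instead one builds from $\tilde\varphi$ and $\tilde\psi$ an nc self-map $\Omega$ of $\fB_k$ whose fixed-point set contains the graph $\{(z,\varphi(z)):z\in\fV\}=\{(\psi(w),w):w\in\fW\}$, a single nc set that is at once a copy of $\fV$ and of $\fW$; Theorem~\ref{thm:nc_vigue} furnishes an nc holomorphic retraction of $\fB_k$ onto the fixed-point set of $\Omega$, Theorem~\ref{thm:nc_bedford} shows $\Omega$ restricts to an automorphism of that retract, and matching differentials at $0$ with the isometry $L$ pins the retract --- hence $\varphi$, after one further conjugation by automorphisms of $\fB_k$ --- down as affine.

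The hard part is this last step. All that precedes it is standard nc function theory together with the known complete-quotient structure of the algebras $H^{\infty}(\fB_n)$; but since the rescaling homotopy is genuinely unavailable for non-homogeneous varieties, linearity of $\varphi$ must be extracted from the dynamics of the auxiliary self-map alone. The delicate technical points are arranging $\Omega$ to be an honest self-map of the \emph{open} ball $\fB_k$ (a nontrivial normalization, as rows of noncommutative multipliers do not add in norm as one would like) and controlling its fixed-point set finely enough for the nc Vigu\'e and Bedford theorems to produce the affineness. This is precisely where the iteration theory of Section~\ref{sec:nc_vigue} enters, and where the present argument goes beyond \cite[Theorem~4.1]{Sha18} and the noncommutative counterpart of the passage from homogeneous to general varieties in \cite{DRS15}.
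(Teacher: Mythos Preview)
Your proposal takes a considerably more complicated route than the paper and leaves the decisive step as an acknowledged ``hard part'' that is never carried out. In the paper the only role of Section~\ref{sec:nc_vigue} is Corollary~\ref{cor:no_level_1_fixed}: an nc self-map of a bounded matrix convex domain with any fixed point must have a \emph{scalar} fixed point. With that single consequence the argument is short. One first reduces via \cite[Lemma~8.2]{3S1} to the case $\matsp(\fV)(n)=\C^d\otimes M_n$ and $\matsp(\fW)(n)=\C^e\otimes M_n$ for large $n$. Theorem~\ref{thm:3s1_isom} already supplies nc maps $g\colon\fB_e\to\fB_d$ and $f\colon\fB_d\to\fB_e$ on the \emph{open} balls with $g\circ f|_{\fV}=\id_{\fV}$ and $f\circ g|_{\fW}=\id_{\fW}$; no lifting to closed balls, no embedding into $\fB_{d+e}$, and no rescaling is needed. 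Set $h=g\circ f\colon\fB_d\to\fB_d$; since $\fV\subset\fix(h)$, Corollary~\ref{cor:no_level_1_fixed} produces a scalar fixed point, which automorphisms move to $0$. Then Theorem~\ref{thm:fixed_points} (i.e.\ \cite[Theorem~3.12]{Sha18}) gives $\fix(h)(n)=(V\otimes M_n)\cap\fB_d(n)$ for a linear subspace $V\subset\C^d$; the matrix-span hypothesis forces $V=\C^d$, so $h=\id$ on all of $\fB_d$. Symmetrically $f\circ g=\id$ on $\fB_e$, whence $d=e$ and $f,g$ are mutually inverse automorphisms of $\fB_d$ carrying $\fV$ onto $\fW$.

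Your scheme never reaches $g\circ f=\id$ globally; instead you attempt to show directly that $\varphi$ is the restriction of a linear isometry, by building an unspecified self-map $\Omega$ of $\fB_{d+e}$ whose fixed set is to contain the rescaled graph of $\varphi$ and then invoking Theorems~\ref{thm:nc_vigue} and~\ref{thm:nc_bedford}. But neither of those theorems says that a holomorphic retract of $\fB_k$ is affine, and ``matching differentials at $0$ with the isometry $L$'' does not by itself pin down a retract as linear; you yourself flag this as the hard part, and it is in fact the entire content of the theorem. The two ingredients your outline omits are precisely what make the paper's proof work: the matrix-span reduction of \cite[Lemma~8.2]{3S1} and the structural fixed-point result Theorem~\ref{thm:fixed_points}. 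Together they remove any need to argue affineness of $\varphi$ directly. (A smaller point: your ``harmless rescaling'' of $\tilde\varphi,\tilde\psi$ to land in the open ball would destroy the relations $\tilde\psi\circ\tilde\varphi|_{\fV}=\id_{\fV}$, so it is not harmless --- but this becomes moot once one uses Theorem~\ref{thm:3s1_isom}, which already lands in the open balls.)
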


Lastly, in Section \ref{sec:horospheres}, we provide an nc version of Wolff's theorem in the spirit of the result of Abate \cite{Abate-horospheres}. Namely, we define nc analogs of big and small horospheres centered at a scalar point on the boundary of a bounded matrix convex set and prove that iterations of an nc analytic self-map send the small one into the big one.

\section{Noncommutative functions and domains} \label{sec:ncfunc}

Let $E$ be a vector space, we set $\M(E) = \bigsqcup_{n=1}^{\infty} M_n(E)$, where $M_n(E) = M_n \otimes E$. We write $\M_d = \M(\C^d)$ and treat $\M_d$ as the set of all $d$-tuples of matrices of all sizes. Note that $\GL_n$ acts on $M_n(E)$ by $S \cdot (A 
\otimes \xi) = (S^{-1} A S) \otimes \xi$. We will denote this action for $\bX \in M_n(E)$ simply by $S^{-1}\bX S$.

Additionally, for $\bX \in M_n(E)$ and $\bY \in M_m(E)$ we will write \[
\bX \oplus \bY = \begin{pmatrix} \bX & 0 \\ 0 & \bY \end{pmatrix} \in M_{n+m}(E).
\]
A subset $\Omega\subset \M(E)$ is called an nc set, if for every $\bX, \bY \in \Omega$, $\bX \oplus \bY \in \Omega$.

If $E$ is an operator space, then we set:
\[
\fB_E = \bigsqcup_{n=1}^{\infty} \left\{\bX \in M_n(E) \mid \|\bX\|_n < 1\right\}.
\]
We endow $\C^d$ with the row norm and write $\fB_d$ for $\fB_{\C^d}$. We will write $\BB_d = \fB_d(1)$ for the unit ball of $\C^d$ and $\D$ for the disc. A subset $\Omega \subset \M(E)$ will be called an nc domain if $\Omega$ is an nc set, level-wise open, and invariant under unitary similarities.

An nc set $\Omega \subset \M(E)$ is called matrix convex if, for every $\bX \in \Omega(n)$ and $V \colon \C^m \to \C^n$ an isometry, we have that $V^* \bX V \in \Omega(m)$.

Let $\Omega \subset \M(E)$ be an nc domain. We say that a function $f \colon \Omega \to \M(F)$ is an nc function, if
\begin{itemize}
    \item $f$ is graded, i.e, $f(\Omega(n)) \subset M_n(F)$, for every $n \in \N$. We write $f_n = f|_{\Omega(n)}$.
    
    \item $f$ respects direct sums, i.e, $f(\bX \oplus \bY)= f(\bX) \oplus f(\bY)$.
    
    \item $f$ respects similarities, for every $n \in \N$, every $\bX \in \Omega(n)$, and every $S \in \GL_n$, if $S^{-1} \bX S \in \Omega(n)$, then $f(S^{-1} \bX S) = S^{-1} f(\bX) S$.
\end{itemize}
It is a remarkable result of Kalyuzhnyi-Verbovetskyi and Vinnikov that a locally bounded nc function is both Gateaux and Frechet differentiable (see also \cite{AgMcC15-GHF}). Hence, we will say an analytic nc function when we mean locally bounded. The key to proving this result is the nc difference-differential operator. Namely, for every $\bX \in \Omega(n)$, $\bY \in \Omega(m)$, there exists a linear map $\Delta f(\bX, \bY) \colon M_{n,m} \otimes E \to M_{n,m} \otimes E$, such that for every $\bZ \in M_{n,m} \otimes E$, such that $\left(\begin{smallmatrix} \bX & \bZ \\ 0 & \bY \end{smallmatrix}\right) \in \Omega(n+m)$,
\[
f \left( \begin{pmatrix} \bX & \bZ \\ 0 & \bY \end{pmatrix} \right) = \begin{pmatrix} f(\bX) & \Delta f(\bX,\bY)(\bZ) \\ 0 & f(\bY) \end{pmatrix}.
\]
In fact, $\Delta f(\bX, \bX)$ is the Frechet derivative of a locally bounded nc function at $\bX$. Moreover, we have the identity
\[
\Delta f(\bX,\bY)(\bX - \bY) = f(\bX) - f(\bY).
\]
For the properties of the nc difference-differential operator, we refer the reader to \cite{KVV}.

The nc sets that we will focus on are described in the following definition.
\begin{defn} \label{def:D_Q}
Let $\cH$, $\cK$ be a Hilbert space. Let $\Omega \subset \M(E)$ be an nc domain and $Q \colon \Omega \to \M(B(\cH,\cK))$ be an nc analytic function. We define $\D_{Q} = \left\{\bZ \in \Omega \mid Q(\bZ) Q(\bZ)^* < I \right\}$. 
\end{defn}

\begin{rem}
For $\bX \in \Omega(n)$, $\bY \in \Omega(m)$, and $\bZ \in M_{n,m} \otimes B(\cH)$, define the nc affine kernel $G(\bX,\bY)(\bZ) = \bZ - Q(\bX) \bZ Q(\bY^*)^*$.  Then, $\D_Q$ coincides with a generalized noncommutative ball, as defined in \cite{BelVin17}. Domains of the form $\D_Q$ were originally defined by Agler and McCarthy for $Q$, a matrix of nc polynomials (see, for example, \cite{AgMcC15-GHF, AgMcC16}). In this form, the sets $\D_Q$ first appeared in the works of Ball, Marx, and Vinnikov \cite{BMV-ncrkhs, BMV-int}.
\end{rem}

Similarly, we have a generalized analog of the half-plane, which by the Effros-Winkler separation theorem, includes all matrix convex sets.

\begin{defn} \label{def:H_L}
Let $\cH$ be a Hilbert space. Let $\Omega \subset \M(E)$ be an nc domain invariant under pointwise conjugation and $L \colon \Omega \to \M(B(\cH))$ be an injective nc analytic function. We define $\HH_L= \left\{\bZ \in \Omega \mid \re(L(\bZ)) < I \right\}$. Following \cite{BMV-int}, we also have a representation $\Omega = \D_Q$, where $Q$ is obtained from $L$ using the Cayley transform. 
\end{defn}

Note that as observed by Taylor in \cite[Section 6]{Taylor-frame}, for every nc domain $\Omega \subset \M_d$, the algebra $\cT(\Omega)$ of holomorphic nc functions on $\Omega$ is a nuclear Frechet algebra. The topology on $\cT(\Omega)$ is the topology of uniform convergence on levelwise compacta. Note that the map that takes an nc function to its restriction to the various levels gives a continuous embedding $\cT(\Omega) \hookrightarrow \prod_{n=1}^{\infty} \cO(\Omega(n)) \otimes M_n$, where we write $\cO(\Omega(n))$ for the algebra of holomorphic functions on $\Omega(n)$ with the topology of uniform convergence on compacta. Since $\cO(\Omega(n)) \otimes M_n$ is nuclear for every $n \in \N$, the product is also nuclear by \cite[Proposition 50.1]{Treves-tvs}. Moreover, since the conditions of respecting direct sums and similarities are closed, we immediately see that $\cT(\Omega)$ is a closed subspace and is also nuclear. Hence, by \cite[Proposition 50.2]{Treves-tvs}, every bounded subset of a nuclear space is precompact, and thus, in particular, in a Frechet space, every bounded sequence has a convergent subsequence. In other words, we can apply Montel's theorem to sequences of nc functions and maps. The obtained convergence is, of course, uniform convergence on compacta on each level. There are stronger versions of Montel's theorem (see, for example, \cite[Theorem 13.14]{AgMcCYng-book} and \cite[Theorem 5.2]{Popescu-funcI}). However, this is the one that will be used almost exclusively throughout this paper. We will apply Popescu's version of Montel's theorem for the row ball \cite[Theorem 5.2]{Popescu-funcI} to obtain a stronger Denjpy-Wolff theorem in this setting.

\section{Noncommutative hyperbolic metrics} \label{sec:hup_metric}

Noncommutative hyperbolic metrics and the nc Lempert function were introduced by the first author and Vinnikov in \cite{BelVin17}. For a contraction $T \in B(\cH,\cK)$, we will write $D_T = 1 - T T^*$ and $D_{T^*} = 1 - T^* T$ for the defect operators. The nc Lempert function for an nc domain $\Omega \subset \M(E)$ is defined by
\[
\delta(\bX,\bY)(\bZ) = \left[ \sup\left\{ t \in [0,\infty] \mid \forall s\in [0,t), \begin{pmatrix} \bX & s \bZ \\ 0 & \bY \end{pmatrix} \in \Omega \right\} \right]^{-1}
\]
Here $\bX \in \Omega(n)$, $\bY \in \Omega(m)$, and $\bZ \in M_{n,m} \otimes E$. We also set for $\bX, \bY \in \Omega(n)$.
\[
\tdel(\bX,\bY) = \delta(\bX,\bY)(\bX - \bY).
\]
In the case of $\D_Q$ we have a special form for the nc Lempert function, which is a particular case of the equality appearing in \cite[Equation 21]{BelVin17}.

\begin{lem} \label{lem:D_Q_nc_lempert}
For $\bX \in \D_Q(n)$, $\bY \in \D_Q(m)$, and $\bZ \in M_{n,m} \otimes E$ we have
\[
\delta(\bX,\bY)(\bZ) = \left\| D_{Q(\bX)}^{-1/2} \Delta Q(\bX,\bY)(\bZ) D_{Q(\bY)^*}^{-1/2}\right\|.
\]
\[
\tdel(\bX,\bY) = \left\| D_{Q(\bX)}^{-1/2} \left(Q(\bX) - Q(\bY)\right) D_{Q(\bY)^*}^{-1/2}\right\|
\]
\end{lem}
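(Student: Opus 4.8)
The plan is to unwind the definition of the nc Lempert function for the domain $\D_Q$ directly, reducing the block-membership condition to a scalar norm inequality by a Schur complement argument; this is the specialization of \cite[Equation 21]{BelVin17} to the present notation, so an alternative would simply be to invoke that equality.

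First, fix $\bX \in \D_Q(n)$, $\bY \in \D_Q(m)$ and $\bZ \in M_{n,m}\otimes E$, and abbreviate $W = Q(\bX)$, $V = Q(\bY)$, $R = \Delta Q(\bX,\bY)(\bZ)$. For $s \geq 0$ small enough that the block matrix lies in the ambient $\Omega$ (automatic when $\Omega = \M(E)$, as for all examples treated here), the difference-differential identity for $Q$ gives
\[
Q\!\left(\begin{pmatrix}\bX & s\bZ \\ 0 & \bY\end{pmatrix}\right) = \begin{pmatrix} W & sR \\ 0 & V\end{pmatrix}.
\]
Hence $\left(\begin{smallmatrix}\bX & s\bZ \\ 0 & \bY\end{smallmatrix}\right) \in \D_Q$ if and only if $I - TT^* > 0$ for $T = \left(\begin{smallmatrix}W & sR \\ 0 & V\end{smallmatrix}\right)$, that is,
\[
\begin{pmatrix} D_{Q(\bX)} - s^2 RR^* & -sRV^* \\ -sVR^* & D_{Q(\bY)} \end{pmatrix} > 0 .
\]

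Since $\bY \in \D_Q$ forces $D_{Q(\bY)} = I - VV^* > 0$, I would apply the Schur complement criterion with respect to the $(2,2)$ block: positivity of the matrix above is equivalent to
\[
D_{Q(\bX)} - s^2 R\bigl(I + V^* D_{Q(\bY)}^{-1} V\bigr) R^* > 0 .
\]
The next step is the elementary intertwining $V^*(I - VV^*)^{-1} = (I - V^*V)^{-1} V^*$, valid on the nose because $V$ is a strict contraction and both defect operators are boundedly invertible; it collapses the middle factor to $I + V^*(I-VV^*)^{-1}V = (I-V^*V)^{-1} = D_{Q(\bY)^*}^{-1}$. Thus the condition reads $D_{Q(\bX)} - s^2 R\,D_{Q(\bY)^*}^{-1}R^* > 0$, and conjugating by $D_{Q(\bX)}^{-1/2} > 0$ and passing to norms it becomes
\[
s\,\bigl\| D_{Q(\bX)}^{-1/2}\,\Delta Q(\bX,\bY)(\bZ)\,D_{Q(\bY)^*}^{-1/2}\bigr\| < 1 .
\]
So the set of admissible $s$ is the half-line up to the reciprocal of that norm, and the supremum in the definition of $\delta$ equals that reciprocal, which is the first formula. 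The second formula is then immediate on taking $\bZ = \bX - \bY$ (so $n = m$) and using $\Delta Q(\bX,\bY)(\bX-\bY) = Q(\bX) - Q(\bY)$.

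The computation is short; the points to be careful about are the bookkeeping of which defect operator sits on which side ($D_{Q(\bX)} = I - Q(\bX)Q(\bX)^*$ on the left versus $D_{Q(\bY)^*} = I - Q(\bY)^*Q(\bY)$ on the right), the verification of the intertwining identity, and — in the generality of Definition \ref{def:D_Q} — the fact that the path $s\mapsto\left(\begin{smallmatrix}\bX & s\bZ \\ 0 & \bY\end{smallmatrix}\right)$ remains in $\Omega$ throughout the relevant range, which is exactly where one leans on \cite{BelVin17}; for the row ball and the maximal quantization of $\BB_d$ this is vacuous since there $\Omega = \M(E)$.
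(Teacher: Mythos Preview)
Your proof is correct and follows essentially the same route as the paper: both apply the difference-differential formula to $Q$, take the Schur complement of $I - TT^*$ with respect to the $(2,2)$ block, and then simplify using the identity $I + V^*(I-VV^*)^{-1}V = (I-V^*V)^{-1}$ (the paper states this as $T^* D_T^{-1} T = D_{T^*}^{-1} - I$, derived via the Neumann series, which is the same thing). The only cosmetic difference is that you invoke the intertwining $V^*(I-VV^*)^{-1} = (I-V^*V)^{-1}V^*$ directly, and you are slightly more explicit about the ambient-domain caveat.
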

\begin{proof}
Let $t > 0$ be such that $\left(\begin{smallmatrix} \bX & t \bZ \\ 0 & \bY \end{smallmatrix} \right) \in \D_Q$. Then
\[
\begin{pmatrix} Q(\bX) & t \Delta Q(\bX, \bY)(\bZ) \\ 0 & Q(\bY) \end{pmatrix} \begin{pmatrix} Q(\bX)^* & 0 \\ t \Delta Q(\bX,\bY)(\bZ)^* & Q(\bY)^* \end{pmatrix} < I.
\]
Multiplying the matrices and applying Schur's complement, we see that
\begin{multline*}
D_{Q(\bX)} - t^2 \Delta Q(\bX,\bY)(\bZ) \Delta Q(\bX,\bY)(\bZ)^* > \\ t^2 \Delta Q(\bX,\bY)(\bZ) Q(\bY)^* D_{Q(\bY)}^{-1} Q(\bY) \Delta Q(\bX, \bY)(\bZ)^*.
\end{multline*}
Let $T \in B(\cH,\cK)$ be a strict row contraction. Writing $D_T^{-1}$ as a power series in $T T^*$, we get that $T^* D_T^{-1} T = D_{T^*}^{-1} - I$. Hence,
\[
D_{Q(\bX)} > t^2 \Delta Q(\bX,\bY)(\bZ) D_{Q(\bY)^*}^{-1} \Delta Q(\bX, \bY)(\bZ)^*.
\]
Thus
\[
\frac{1}{t^2} > D_{Q(\bX)}^{-1/2} \Delta Q(\bX,\bY)(\bZ) D_{Q(\bY)^*}^{-1} \Delta Q(\bX, \bY)(\bZ)^* D_{Q(\bX)}^{-1/2}.
\]
Running the argument back, we see that this condition is necessary and sufficient. Hence we conclude that
\[
\delta(\bX,\bY)(\bZ) = \left\| D_{Q(\bX)}^{-1/2} \Delta Q(\bX,\bY)(\bZ) D_{Q(\bY)^*}^{-1/2}\right\|
\]
The second equality follows from the nc difference-differential formula \cite[Theorem 2.10]{KVV}.
\end{proof}

We have a similar result for generalized half-planes.

\begin{lem} \label{lem:H_L_nc_lempert}
For $\bX \in \HH_L(n)$, $\bY \in \HH_L(m)$, and $\bZ \in M_{n,m} \otimes \C^d$ we have
\[
\delta(\bX,\bY)(\bZ) = \frac{1}{2} \left\| \left(I - \re(L(\bX))\right)^{-1/2} \Delta L(\bX,\bY)(\bZ) \left( I - \re(L(\bY))\right)^{-1/2}\right\|.
\]
\[
\tdel(\bX,\bY) = \frac{1}{2} \left\| \left(I -\re(L(\bX))\right)^{-1/2} \left(L(\bX) - L(\bY)\right) \left( I - \re(L(\bY)) \right)^{-1/2}\right\|
\]
\end{lem}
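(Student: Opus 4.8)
The plan is to derive Lemma~\ref{lem:H_L_nc_lempert} from Lemma~\ref{lem:D_Q_nc_lempert} via the Cayley transform, using the translation between the half-plane picture $\HH_L$ and the ball picture $\D_Q$ that is already alluded to in Definition~\ref{def:H_L}. Concretely, if $W = L(\bZ)$, then the Cayley transform $W \mapsto (W-I)(W+I)^{-1}$ (suitably normalized) sends the region $\re(W) < I$ to the region $\|\cdot\| < 1$, and one sets $Q = \mathrm{(Cayley)} \circ L$ so that $\D_Q = \HH_L$ as nc sets. Since the nc Lempert function $\delta(\bX,\bY)(\bZ)$ depends only on the underlying nc \emph{set} $\Omega$ (it is defined purely in terms of which upper-triangular tuples lie in $\Omega$), and since $\D_Q$ and $\HH_L$ are literally the same subset of $\M(E)$, the two Lemmas compute the same quantity. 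So it suffices to show that the formula of Lemma~\ref{lem:D_Q_nc_lempert}, written in terms of $Q$, equals the claimed formula written in terms of $L$.

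The key computational steps are as follows. First, relate the defect operator of $Q(\bX)$ to $I - \re(L(\bX))$: if $Q(\bX) = (L(\bX)-I)(L(\bX)+I)^{-1}$ then a direct manipulation gives
\[
D_{Q(\bX)} = I - Q(\bX)Q(\bX)^* = 2(L(\bX)+I)^{-1}\bigl(I - \re(L(\bX))\bigr)(L(\bX)^*+I)^{-1} \cdot 2,
\]
and symmetrically $D_{Q(\bX)^*} = I - Q(\bX)^*Q(\bX) = 2(L(\bX)^*+I)^{-1}(I-\re(L(\bX)))(L(\bX)+I)^{-1}\cdot 2$; I will need to be careful about the normalization constant so that the factor of $\tfrac12$ in the statement comes out correctly. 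Second, compute $\Delta Q(\bX,\bY)(\bZ)$ in terms of $\Delta L(\bX,\bY)(\bZ)$ using the chain rule and product rule for the nc difference-differential operator (from \cite{KVV}): since $Q = \varphi \circ L$ with $\varphi$ the M\"obius map $w \mapsto (w-1)(w+1)^{-1}$, one gets $\Delta Q(\bX,\bY)(\bZ) = 2(L(\bX)+I)^{-1}\,\Delta L(\bX,\bY)(\bZ)\,(L(\bY)+I)^{-1}$ after simplification, the factor $2$ coming from $\varphi'(w) = 2(w+1)^{-2}$. Third, substitute all three expressions into
\[
\delta(\bX,\bY)(\bZ) = \bigl\| D_{Q(\bX)}^{-1/2}\, \Delta Q(\bX,\bY)(\bZ)\, D_{Q(\bY)^*}^{-1/2} \bigr\|
\]
and observe that the intertwining factors $(L(\bX)+I)^{\pm 1}$ and $(L(\bY)+I)^{\pm 1}$ cancel, leaving exactly $\tfrac12 \bigl\| (I-\re(L(\bX)))^{-1/2}\,\Delta L(\bX,\bY)(\bZ)\,(I-\re(L(\bY)))^{-1/2} \bigr\|$. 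The second displayed formula then follows by setting $\bZ = \bX - \bY$ and invoking the nc difference-differential identity $\Delta L(\bX,\bY)(\bX-\bY) = L(\bX)-L(\bY)$ from \cite[Theorem~2.10]{KVV}, just as in the proof of Lemma~\ref{lem:D_Q_nc_lempert}.

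The main obstacle I anticipate is bookkeeping rather than anything conceptual: tracking the normalization constants and the noncommutativity of the factors $(L(\bX)+I)^{-1}$ through the square roots, since $D_{Q(\bX)}^{-1/2}$ does not factor as a product of the individual pieces but the \emph{norm} is unchanged under the two-sided multiplication by the invertibles $(L(\bX)+I)$ and $(L(\bY)+I)$ (one uses that $\|A X B\|$ with $A, B$ invertible can be rewritten, and that $D_{Q(\bX)}^{-1/2}$ absorbs one factor on each side because $D_{Q(\bX)} = c\,(L(\bX)+I)^{-1}(I-\re L(\bX))(L(\bX)^*+I)^{-1}$ so that $D_{Q(\bX)}^{-1/2} = $ a polar-type rearrangement of $(L(\bX)^*+I)(I-\re L(\bX))^{-1/2}$ up to a unitary, which drops out in the norm). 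A clean way to finesse this is to avoid computing $D_{Q(\bX)}^{-1/2}$ explicitly: instead note that $\delta(\bX,\bY)(\bZ)^{-2}$ is characterized, as in the proof of Lemma~\ref{lem:D_Q_nc_lempert}, as the supremum of $1/t^2$ such that an operator inequality holds, rewrite that inequality by conjugating with $(L(\bX)+I)$ and $(L(\bY)+I)$, and read off the half-plane formula directly. Alternatively, one may simply cite \cite[Equation 21]{BelVin17}, of which this is a stated special case, and present the Cayley-transform computation as the verification. I would write it the second way for brevity, flagging the constant-chasing as routine.
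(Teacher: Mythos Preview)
The paper does not prove this lemma: it is stated immediately after Lemma~\ref{lem:D_Q_nc_lempert} with only the sentence ``We have a similar result for generalized half-planes,'' so the intended proof is the direct analogue of that argument. That direct route is shorter than going through the Cayley transform. The condition $\left(\begin{smallmatrix}\bX & t\bZ \\ 0 & \bY\end{smallmatrix}\right)\in\HH_L$ unpacks to
\[
\begin{pmatrix} I-\re L(\bX) & -\tfrac{t}{2}\,\Delta L(\bX,\bY)(\bZ) \\[2pt] -\tfrac{t}{2}\,\Delta L(\bX,\bY)(\bZ)^{*} & I-\re L(\bY) \end{pmatrix} > 0,
\]
and one Schur complement gives
\[
\frac{4}{t^{2}} > \left\|\bigl(I-\re L(\bX)\bigr)^{-1/2}\Delta L(\bX,\bY)(\bZ)\bigl(I-\re L(\bY)\bigr)^{-1/2}\right\|^{2},
\]
which yields the formula with the factor $\tfrac12$ immediately. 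Because the block matrix is already self-adjoint, the identity $T^{*}D_T^{-1}T = D_{T^{*}}^{-1}-I$ used in Lemma~\ref{lem:D_Q_nc_lempert} is not even needed.

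Your Cayley-transform strategy is conceptually fine and is indeed what Definition~\ref{def:H_L} alludes to, but the specific transform you wrote down is for the wrong half-plane. The map $W\mapsto (W-I)(W+I)^{-1}$ sends $\{\re W>0\}$ to the unit ball, not $\{\re W<I\}$: carrying out your own defect computation (using that $L\pm I$ commute) gives
\[
I-QQ^{*}=4\,(L+I)^{-1}\,\re(L)\,(L^{*}+I)^{-1},
\]
not $4(L+I)^{-1}(I-\re L)(L^{*}+I)^{-1}$ as you asserted; and $L+I$ need not even be invertible on $\HH_L$ (take $L=-I$). The correct choice is to Cayley-transform $I-L$, i.e.\ set $Q=L(L-2I)^{-1}$, after which your cancellation argument does go through. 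But at that point you are doing strictly more work than the two-line Schur complement above. Your own suggested fallback---``rewrite the operator inequality and read off the half-plane formula directly''---\emph{is} the paper's implied argument, and is the one to write.
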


\begin{rem}
As stated in \cite{BelVin17}, $\tilde{\delta}$ is symmetric. We include a brief proof of this fact for the sake of completeness. Let $\Omega$ be a unitarily invariant nc domain and let $t \in (0, \pi/2)$. For every $\bX,\, \bY \in \Omega(n)$, we have 
\[
\begin{pmatrix} \cos t & -\sin t \\ - \sin t & -\cos t \end{pmatrix} \begin{pmatrix} \bY & \frac{\cos t}{\sin t} (\bY - \bX) \\ 0 & \bX \end{pmatrix} \begin{pmatrix} \cos t & - \sin t \\ -\sin t & -\cos t \end{pmatrix} = \begin{pmatrix} \bX & \frac{\cos t}{\sin t} (\bX - \bY) \\ 0 & \bY \end{pmatrix}.
\]
Hence, $\tdel(\bX,\bY) = \tdel(\bY,\bX)$.
\end{rem}

We record the following simple observation, that is, however, using the power of the nc assumption (see also \cite[Theorem 5]{AbdKV-fixed}).

\begin{lem} \label{lem:fixed_point_scalar}
Let $\Omega \subset \M(E)$ be a bounded matrix convex nc domain that contains $0$. Let $f \colon \Omega \to \Omega$ be an nc analytic map. If there exists $0< r < 1$, such that $f(\Omega) \subset r\Omega$, then $f$ has a unique fixed point in $\Omega(1)$. Namely, there exists $\bX \in \Omega(1)$, such that $f(\bX) =\bX$ and the only fixed points of $f$ are ampliations of $\bX$.
\end{lem}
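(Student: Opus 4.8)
The plan is to exploit the hypothesis $f(\Omega)\subset r\Omega$ to produce a genuine strict contraction on some fixed level in the nc Lempert (or Kobayashi) metric, apply the Banach fixed point theorem there, and then use the nc structure to descend to a scalar fixed point. Concretely, consider the level $\Omega(1)\subset E$, which is an ordinary bounded convex domain in $E$ (a Banach space, if $E$ is finite dimensional an ordinary bounded convex set; more generally I would first reduce to $E$ finite dimensional by restricting to a finite-dimensional subspace — but since $\Omega(1)$ is bounded convex containing $0$, the Kobayashi/Lempert distance on $\Omega(1)$ is a genuine metric for which $\Omega(1)$, or a suitable subset, is complete). The map $f_1\colon\Omega(1)\to\Omega(1)$ is holomorphic and satisfies $f_1(\Omega(1))\subset r\Omega(1)$.

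First I would show that $f_1$ is a strict contraction for the Lempert/Kobayashi metric. By the Schwarz–Pick property, $f_1$ is distance-nonincreasing for $\delta$ on $\Omega(1)$. The improvement to a strict contraction comes from the inclusion $f_1(\Omega(1))\subset r\Omega(1)$: the identity embedding $r\Omega(1)\hookrightarrow\Omega(1)$ strictly decreases the Kobayashi metric by a factor bounded away from $1$ on the relatively compact set $\overline{r\Omega(1)}$ — this is exactly the classical mechanism (cf. the Earle–Hamilton theorem): since $\overline{f_1(\Omega(1))}$ is a bounded, norm-closed, convex set lying in the interior of $\Omega(1)$ at positive distance $\varepsilon>0$ from $\partial\Omega(1)$, one gets $\delta_{\Omega(1)}(f_1(\bX),f_1(\bY))\le k\,\delta_{\Omega(1)}(\bX,\bY)$ for a constant $k<1$ depending on $\varepsilon$ and the diameter. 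Hence $f_1$ is a $k$-contraction of the complete metric space $(\overline{f_1(\Omega(1))},\delta_{\Omega(1)})$ into itself, and by the Banach contraction principle it has a unique fixed point $\bX\in\Omega(1)$, with $f_1^{\circ m}(\bY)\to\bX$ for every $\bY\in\Omega(1)$.

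Next I would upgrade uniqueness from ``unique in $\Omega(1)$'' to ``the only fixed points in all of $\Omega$ are ampliations of $\bX$''. Suppose $\bW\in\Omega(n)$ satisfies $f(\bW)=\bW$. Since $\delta$ is Schwarz–Pick-contractive for $f$ on every level and, using the same Earle–Hamilton argument applied to $f$ on $\Omega(n)$ (the inclusion $f(\Omega)\subset r\Omega$ holds on every level), $f_n$ is a strict contraction on $\Omega(n)$ with respect to $\tdel$; so it has a unique fixed point there too. But the ampliation $\bX^{(n)}=\bX\oplus\cdots\oplus\bX$ is a fixed point of $f_n$ because $f$ respects direct sums: $f(\bX^{(n)})=f(\bX)\oplus\cdots\oplus f(\bX)=\bX^{(n)}$. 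By uniqueness on $\Omega(n)$, $\bW=\bX^{(n)}$. This also forces $\bX$ itself to be unique in $\Omega(1)$ (already shown) and shows the fixed point set of $f$ is precisely $\{\bX^{(n)}:n\in\N\}$.

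The main obstacle is making the ``strict contraction'' step fully rigorous in the Banach-space (possibly infinite-dimensional $E$) setting: one must verify that $\delta_{\Omega(1)}$ (equivalently the Kobayashi pseudodistance) is an honest metric, that $\overline{f(\Omega)}\cap\Omega(n)$ is $\delta$-complete, and that the contraction constant can be taken uniform. I expect this to follow from the Earle–Hamilton theorem, or from Lempert's theorem together with boundedness of $\Omega$, applied levelwise; alternatively, if $E$ is finite dimensional (the case of real interest for $\fB_d$ and its quantizations), these are standard facts about bounded convex domains. A secondary subtlety is that $\Omega(1)$ need not be circled, but boundedness plus convexity plus $0\in\Omega$ already guarantees the Kobayashi metric is a complete metric on $\Omega(1)$, which is all that is needed.
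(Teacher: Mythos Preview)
Your proposal is correct and follows essentially the same approach as the paper: apply the Earle--Hamilton theorem levelwise to obtain a unique fixed point on each $\Omega(n)$, then observe that the ampliation $\bX^{\oplus n}$ of the level-one fixed point $\bX$ is fixed by $f_n$, so by uniqueness it is the only fixed point on level $n$. The paper simply invokes Earle--Hamilton directly rather than unpacking the strict-contraction mechanism as you do, and it does not dwell on the infinite-dimensional subtleties you flag.
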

\begin{proof}
Let $f_n = f|_{\Omega(n)}$. By the Earle-Hamilton theorem, $f_n \colon \Omega(n) \to \Omega(n)$ has a unique fixed point. In particular, let $\bX \in \Omega(1)$ be the unique fixed point on this level, then $\bX^{\oplus n}$ is fixed by $f_n$ since $f$ respects direct sums. Thus, by uniqueness, the fixed point on level $n$ is $\bX^{\oplus n}$.
\end{proof}

\begin{lem} \label{lem:finsler_seminorm}
Let $\Omega = \D_Q$. Then for every $n$, $\rho_{\bX}(\bZ) = \delta(\bX,\bX)(\bZ)$ is a seminorm. Furthermore, $\rho_{\bX}$ is a norm if and only if $\Delta Q(\bX,\bX)$ is injective.
\end{lem}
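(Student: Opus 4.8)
The plan is to read everything off the explicit formula for the Lempert function on $\D_Q$ provided by Lemma \ref{lem:D_Q_nc_lempert}. Setting $\bX = \bY$ in that formula gives
\[
\rho_{\bX}(\bZ) = \delta(\bX,\bX)(\bZ) = \left\| D_{Q(\bX)}^{-1/2}\, \Delta Q(\bX,\bX)(\bZ)\, D_{Q(\bX)^*}^{-1/2}\right\|,
\]
since $D_{Q(\bX)^*}$ is exactly the defect that appears on the right when $\bY = \bX$. So $\rho_{\bX}$ is the composition of the linear map $\bZ \mapsto D_{Q(\bX)}^{-1/2}\Delta Q(\bX,\bX)(\bZ) D_{Q(\bX)^*}^{-1/2}$ with the operator norm.

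First I would observe that $\bZ \mapsto \Delta Q(\bX,\bX)(\bZ)$ is linear — this is part of the definition of the nc difference-differential operator recalled in Section \ref{sec:ncfunc} (indeed $\Delta Q(\bX,\bX)$ is the Fr\'echet derivative of $Q$ at $\bX$, hence linear) — and that left/right multiplication by the fixed invertible positive operators $D_{Q(\bX)}^{-1/2}$ and $D_{Q(\bX)^*}^{-1/2}$ is again linear. Composing a linear map into a normed space with the norm yields a seminorm: absolute homogeneity and the triangle inequality are immediate from the corresponding properties of $\|\cdot\|$ together with linearity. This gives the first assertion.

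For the second assertion, I would argue that $\rho_{\bX}(\bZ) = 0$ if and only if $D_{Q(\bX)}^{-1/2}\Delta Q(\bX,\bX)(\bZ) D_{Q(\bX)^*}^{-1/2} = 0$, which — since $D_{Q(\bX)}^{-1/2}$ and $D_{Q(\bX)^*}^{-1/2}$ are invertible (here we use that $\bX \in \D_Q$, so $Q(\bX)Q(\bX)^* < I$, hence both defect operators are strictly positive and invertible) — holds if and only if $\Delta Q(\bX,\bX)(\bZ) = 0$. Therefore the null space of $\rho_{\bX}$ coincides with $\ker \Delta Q(\bX,\bX)$, and $\rho_{\bX}$ is a norm precisely when this kernel is trivial, i.e., when $\Delta Q(\bX,\bX)$ is injective. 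One small point worth spelling out is that $\Delta Q(\bX,\bX)$ maps $M_n \otimes E$ into $M_n \otimes B(\cH,\cK)$ and the conjugation by the defect operators is an isomorphism of the target onto itself (or at least a left/right multiplication that is injective on the relevant subspace), so no kernel is created or destroyed by it.

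I do not expect a serious obstacle here: the statement is essentially a formal consequence of Lemma \ref{lem:D_Q_nc_lempert} plus the linearity of $\Delta Q(\bX,\bX)$ and the invertibility of the defect operators on $\D_Q$. The only thing to be a little careful about is making sure the defect operators are genuinely invertible (rather than merely positive), which is exactly the content of the strict inequality $Q(\bX)Q(\bX)^* < I$ defining $\D_Q$, and that left and right multiplication by invertible operators does not change the kernel — this is where the "if and only if" in the norm characterization comes from.
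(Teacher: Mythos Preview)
Your proposal is correct and follows exactly the approach of the paper, which simply says the lemma is ``obvious from Lemma \ref{lem:D_Q_nc_lempert} and the linearity of $\Delta Q(\bX,\bX)$.'' You have merely unpacked the details (invertibility of the defect operators, identification of the kernel) that the paper leaves implicit.
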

\begin{proof}
Obvious from Lemma \ref{lem:D_Q_nc_lempert} and the linearity of $\Delta Q(\bX,\bX)$.
\end{proof}

Note that for every $\bX \in \D_Q(n)$, we can define a collection of seminorms on $M_k(M_n(E))$, via $\rho_{k,\bX}(\bZ) = \delta(I_k \otimes \bX,  I_k \otimes \bX)(\bZ)$.

\begin{prop}
Let $Q$ be such that for some $\bX \in \D_Q(n)$, $\Delta Q(\bX, \bX)$ is injective. Then $\rho_{k,\bX}$ form an operator space structure on $M_n(E)$.
\end{prop}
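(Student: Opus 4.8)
The plan is to exhibit the family $\{\rho_{k,\bX}\}_{k\ge 1}$ as the operator space structure on $M_n(E)$ pulled back through a single injective linear map into the concrete operator space $B(\cH^n,\cK^n)$; once this is done, the operator space axioms are automatic.

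First I would record how the constituents of the formula in Lemma~\ref{lem:D_Q_nc_lempert} behave under ampliation. Since $I_k\otimes\bX=\bX^{\oplus k}$ and $Q$ respects direct sums, $Q(I_k\otimes\bX)=I_k\otimes Q(\bX)$, hence $D_{Q(I_k\otimes\bX)}=I_k\otimes D_{Q(\bX)}$ and $D_{Q(I_k\otimes\bX)^*}=I_k\otimes D_{Q(\bX)^*}$, and the same holds for their inverse square roots by functional calculus. The one substantive point is that the nc difference--differential operator at an ampliation acts block-entrywise: identifying $M_k(M_n(E))$ with $k\times k$ arrays $(\bZ_{ij})$, $\bZ_{ij}\in M_n(E)\cong M_{n,n}\otimes E$, one has
\[
\Delta Q(I_k\otimes\bX,\,I_k\otimes\bX)(\bZ)=\bigl(\Delta Q(\bX,\bX)(\bZ_{ij})\bigr)_{i,j=1}^{k}.
\]
By linearity in $\bZ$ it suffices to treat $\bZ$ with a single nonzero block. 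If that block is diagonal, $\bX^{\oplus k}+t\bZ$ is a direct sum and the identity follows from $Q$ respecting direct sums after differentiating in $t$. If the nonzero block $\bZ_{ab}$ is off-diagonal, then $\bX^{\oplus k}+t\bZ$ leaves invariant the coordinate subspace spanned by the $a$-th and $b$-th blocks; compressing to an isometry $T$ onto that subspace yields an intertwining $(\bX^{\oplus k}+t\bZ)\,T=T\left(\begin{smallmatrix}\bX & t\bZ_{ab}\\ 0 & \bX\end{smallmatrix}\right)$, and since nc functions preserve intertwiners, the defining property of $\Delta$ forces the $(a,b)$-block of $Q(\bX^{\oplus k}+t\bZ)$ to equal $t\,\Delta Q(\bX,\bX)(\bZ_{ab})$ and all other off-diagonal blocks to vanish; differentiating at $t=0$ gives the claim. (This is the statement that $\Delta$ respects direct sums; cf.\ \cite{KVV}.)

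Combining these observations with Lemma~\ref{lem:D_Q_nc_lempert}, for every $k$ and every $\bZ=(\bZ_{ij})\in M_k(M_n(E))$,
\[
\rho_{k,\bX}(\bZ)=\bigl\|\,(I_k\otimes D_{Q(\bX)}^{-1/2})\,\bigl(\Delta Q(\bX,\bX)(\bZ_{ij})\bigr)_{ij}\,(I_k\otimes D_{Q(\bX)^*}^{-1/2})\,\bigr\|.
\]
Define the linear map $\psi\colon M_n(E)\to B(\cH^n,\cK^n)$ by $\psi(W)=D_{Q(\bX)}^{-1/2}\,\Delta Q(\bX,\bX)(W)\,D_{Q(\bX)^*}^{-1/2}$; it is injective because $\Delta Q(\bX,\bX)$ is injective and the defect operators are invertible (as $\bX\in\D_Q(n)$). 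The displayed identity says exactly that $\rho_{k,\bX}(\bZ)=\|\psi^{(k)}(\bZ)\|$, where $\psi^{(k)}\colon M_k(M_n(E))\to M_k(B(\cH^n,\cK^n))=B((\cH^n)^k,(\cK^n)^k)$ is the $k$-th ampliation $(\bZ_{ij})\mapsto(\psi(\bZ_{ij}))$. Injectivity of $\psi$ makes each $\psi^{(k)}$ injective, so each $\rho_{k,\bX}$ is a genuine norm, and $\psi$ identifies $(M_n(E),\{\rho_{k,\bX}\})$ with the linear subspace $\psi(M_n(E))\subseteq B(\cH^n,\cK^n)$ carrying the matrix norms inherited from this concrete operator space. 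A subspace of $B(\cH^n,\cK^n)$ with the inherited matrix norms satisfies Ruan's axioms, so $\{\rho_{k,\bX}\}_{k\ge 1}$ is an operator space structure on $M_n(E)$.

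The genuinely nontrivial ingredient is the block-entrywise formula for $\Delta Q$ at an ampliation --- this is where the nc hypotheses on $Q$ are used --- while everything else is bookkeeping with functional calculus and the universal property of operator subspaces; I expect that step to be the main obstacle, though it is standard in nc function theory.
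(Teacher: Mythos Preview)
Your proof is correct and rests on the same key identity as the paper's, namely that $\Delta Q(I_k\otimes\bX,I_k\otimes\bX)=I_{M_k}\otimes\Delta Q(\bX,\bX)$ (your block-entrywise formula). The difference is in how the conclusion is drawn: the paper uses this identity to verify Ruan's axioms by hand---checking the direct-sum rule via the properties of $\delta$ and the rectangular-multiplication rule via the commutation of $\Delta Q$ with $\alpha\otimes I$, $\beta\otimes I$---whereas you repackage the same computation as the statement that $\rho_{k,\bX}=\|\psi^{(k)}(\cdot)\|$ for a single injective linear map $\psi$ into the concrete operator space $B(\cH^n,\cK^n)$, so the axioms come for free. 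Your route is a bit more conceptual and avoids the axiom-by-axiom check; the paper's route is slightly more self-contained in that it does not need to invoke that subspaces of $B(\cH,\cK)$ inherit an operator space structure. Either way the substance is the ampliation formula for $\Delta Q$, and your justification of it via intertwiners is exactly the standard nc-function argument.
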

\begin{proof}
By our assumption and the properties of the nc derivative $\Delta Q(I_k \otimes \bX, I_k \otimes \bX)$ is injective for every $k \in \N$. Thus, each $\rho_k$ is a norm by the previous lemma. Hence, we will verify Ruan's axioms.

Let $\bZ \in\M_{k n}(E)$ and $\bW \in M_{\ell n}(E)$. By the properties of $\delta$, we have that
\[
\rho_{k+\ell}(\bZ \oplus \bW) = \delta(I_{k+\ell} \otimes \bX, I_{k+\ell} \otimes \bX)(\bZ \oplus \bW) = \max\{\rho_k(\bZ),\rho_{\ell}(\bW)\}.
\]

Now let $\alpha \in M_{\ell,k}$, $\beta \in M_{k,\ell}$, and $Z \in M_k (M_{n}(E))$. Write $I = I_{M_n(E)}$ and observe that since $\Delta Q(\bX, \bX)$ is linear and satisfies for every $m \in \N$, $\Delta Q(I_m \otimes \bX, I_m \otimes \bX) = I_{M_m} \otimes \Delta Q(\bX, \bX)$, we get that
\[
\Delta Q(I_{\ell} \otimes \bX, I_{\ell} \otimes \bX)((\alpha \otimes I) \bZ (\beta \otimes I)) = (\alpha \otimes I) \Delta Q(I_k \otimes \bX, I_k \otimes \bX)(\bZ) (\beta \otimes I).
\]
Hence,
\[
\rho_{\ell,\bX}\left((\alpha \otimes I)\bZ (\beta \otimes I)\right) \leq \|\alpha\|\|\beta\|\rho_{k,\bX}(\bZ).
\]
This completes the proof.
\end{proof}

\begin{rem}
If $Q \colon \M(E) \to \M(B(\cH,\cK))$ is an injective analytic nc function, it is not hard to check that $\Delta Q(\bX, \bX)$ is injective for every $\bX \in \M(E)$. Indeed, for every $\bZ \in M_n(E)$, such that $\Delta Q(\bX, \bX)(\bZ) = 0$,
\[
Q \left(\begin{pmatrix} \bX & \bZ \\ 0 & \bX \end{pmatrix} \right) = \begin{pmatrix}
    Q(\bX) & 0 \\ 0 & Q(\bX)
\end{pmatrix} = Q(\bX \oplus \bX).
\]
Hence, for every point $\bX \in \D_Q$, we have an operator space structure on the ``tangent space'' at $\bX$.
\end{rem}

\begin{cor}
Let $f \colon \D_Q \to \D_P$ be an nc function. Then, for every $\bX \in \D_Q$, the linear map $\Delta f(\bX,\bX)$ is a complete contraction.
\end{cor}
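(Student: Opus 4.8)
The plan is to deduce this from the Schwarz–Pick-type behavior of $\tdel$ together with Lemma~\ref{lem:D_Q_nc_lempert} and Lemma~\ref{lem:finsler_seminorm}. First I would recall the fundamental contractivity property of the nc Lempert function: if $f\colon \Omega\to\Omega'$ is any nc analytic map, then for all $\bX\in\Omega(n)$, $\bY\in\Omega(m)$, and $\bZ\in M_{n,m}\otimes E$, one has $\delta_{\Omega'}(f(\bX),f(\bY))(\Delta f(\bX,\bY)(\bZ))\le \delta_{\Omega}(\bX,\bY)(\bZ)$. This is the infinitesimal/difference-differential form of the distance-decreasing property established in \cite{BelVin17}; it follows directly from the definition of $\delta$ by applying $f$ to the block upper-triangular matrix $\left(\begin{smallmatrix}\bX & s\bZ\\ 0 & \bY\end{smallmatrix}\right)$ and using that $f\left(\begin{smallmatrix}\bX & s\bZ\\ 0 & \bY\end{smallmatrix}\right)=\left(\begin{smallmatrix}f(\bX) & s\Delta f(\bX,\bY)(\bZ)\\ 0 & f(\bY)\end{smallmatrix}\right)$ lies in $\Omega'$ whenever the original matrix lies in $\Omega$.

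Next I would specialize to the diagonal, i.e. take $m=n$, $\bY=\bX$, so that $f(\bY)=f(\bX)$ and $\Delta f(\bX,\bX)$ is the Frechet derivative at $\bX$. The contractivity inequality becomes
\[
\rho^{\D_P}_{f(\bX)}\bigl(\Delta f(\bX,\bX)(\bZ)\bigr)=\delta_{\D_P}\bigl(f(\bX),f(\bX)\bigr)\bigl(\Delta f(\bX,\bX)(\bZ)\bigr)\le \delta_{\D_Q}(\bX,\bX)(\bZ)=\rho^{\D_Q}_{\bX}(\bZ).
\]
So $\Delta f(\bX,\bX)$ is a contraction from the seminormed space $(M_n(E),\rho^{\D_Q}_{\bX})$ to $(M_n(F),\rho^{\D_P}_{f(\bX)})$. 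To upgrade from contraction to complete contraction, I would apply the same argument after ampliation: for each $k$, the map $f$ induces an nc map on the $k$-th matrix amplification, and applying the inequality at the point $I_k\otimes\bX\in\D_Q(kn)$ gives
\[
\rho^{\D_P}_{k,f(\bX)}\bigl((\id_{M_k}\otimes\Delta f(\bX,\bX))(\bZ)\bigr)\le \rho^{\D_Q}_{k,\bX}(\bZ),
\]
using the identity $\Delta f(I_k\otimes\bX,I_k\otimes\bX)=I_{M_k}\otimes\Delta f(\bX,\bX)$ (the same identity invoked for $Q$ in the proof of the Proposition above). Since $\rho_{k,\bX}$ and $\rho_{k,f(\bX)}$ are precisely the matrix norms of the operator space structures on the tangent spaces (as established in the Proposition and Remark), this says exactly that $\Delta f(\bX,\bX)$ is completely contractive with respect to these operator space structures.

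The one point requiring a little care — and the main obstacle — is the degenerate case where $\Delta Q(\bX,\bX)$ or $\Delta P(f(\bX),f(\bX))$ fails to be injective, so that $\rho_{\bX}$, respectively $\rho_{f(\bX)}$, is only a seminorm rather than a norm. In that situation "operator space structure" should be read as "matrix seminorm structure," and complete contractivity is the statement that the inequality $\rho^{\D_P}_{k,f(\bX)}\circ(\id_{M_k}\otimes\Delta f(\bX,\bX))\le \rho^{\D_Q}_{k,\bX}$ holds for all $k$; the argument above proves exactly this and no issue arises, since at no stage do we need to invert the Lempert seminorm. The explicit formula from Lemma~\ref{lem:D_Q_nc_lempert} can be used as an alternative, more computational route: it reduces the claim to the operator inequality $\|D_{P(f(\bX))}^{-1/2}(\id_{M_k}\otimes\Delta P(f(\bX),f(\bX)))(\Delta f(\bX,\bX)(\bZ))D_{P(f(\bX))^*}^{-1/2}\|\le \|D_{Q(\bX)}^{-1/2}(\id_{M_k}\otimes\Delta Q(\bX,\bX))(\bZ)D_{Q(\bX)^*}^{-1/2}\|$, but the slick path is via the distance-decreasing property of $\delta$ applied to ampliations, so that is what I would write up.
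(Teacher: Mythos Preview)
Your proposal is correct and follows essentially the same approach as the paper: the paper's one-line proof simply invokes \cite[Proposition 3.2]{BelVin17} (the distance-decreasing property of $\delta$ under nc maps) together with the ampliation identity $\Delta f(I_k\otimes\bX,I_k\otimes\bX)=I_{M_k}\otimes\Delta f(\bX,\bX)$, which is exactly what you have unpacked. Your additional discussion of the degenerate seminorm case is a reasonable clarification but not something the paper addresses explicitly.
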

\begin{proof}
The claim follows immediately from \cite[Proposition 3.2]{BelVin17} and the properties of $\delta$ and the nc difference-differential operator.
\end{proof}

Recall from \cite{BelVin17} that there are several ways to define distances on an nc domain $\Omega$ using $\widetilde{\delta}$. The simplest one is a level-dependent distance. Let $\bX, \bY \in \Omega(n)$, we set:
\[
\widetilde{d}_{\Omega}(\bX,\bY) = \inf\{ \sum_{k=1}^m \widetilde{\delta}(\bZ_{k-1},\bZ_k) \mid m \in \N,\, \bX = \bZ_0,\bZ_1,\ldots,\bZ_m = \bY \in \Omega(n) \}
\]
One can remove the dependence on levels by altering the definition slightly
\[
\widetilde{d}_{\Omega,\infty}(\bX, \bY) = \inf\{ \sum_{k=1}^m \widetilde{\delta}(\bZ_{k-1},\bZ_k) \mid m, \ell \in \N,\,I_{\ell} \otimes \bX = \bZ_0,\bZ_1,\ldots,\bZ_m = I_{\ell} \otimes \bY \in \Omega(\ell n) \}
\]
Lastly, one can define a distance by measuring the lengths of paths connecting the two points.
\[
d_{\Omega}(\bX,\bY) = \inf\{ \int_0^1 \delta(\gamma(t),\gamma(t))(\gamma'(t))dt \mid \gamma \in C^1_{pw}([0,1],\Omega(n))\}.
\]
\[
d_{\Omega,\infty}(\bX,\bY) = \inf\{ \int_0^1 \delta(\gamma(t),\gamma(t))(\gamma'(t))dt \mid \gamma \in C^1_{pw}([0,1],\Omega(\ell n)), \ell \in \N\}.
\]

Here $C^1_{pw}([0,1],\Omega(n))$ stands for the piecewise continuously differentiable paths in $\Omega(n)$. Namely $\gamma \colon [0,1] \to \Omega(n)$, such that there exists finitely many $0 = t_1 < t_2 < \cdots < t_k = 1$, such that $f$ is $C^1$ on every open segment $(t_i,t_{i+1})$ and the one-sided derivatives exist at all the $t_i$.

\begin{lem}  \label{lem:delta_and_isom}
Let $\Omega \subset \M(E)$ be an open bounded matrix convex nc domain. Let $V \colon \C^k \to \C^n$ and $W \colon \C^{\ell} \to \C^m$ be isometries. Then, for every $\bX \in \Omega(n)$, $\bY \in \Omega(m)$,  and $\bZ \in M_{n,m}(E)$, we have
\[
\delta(V^* \bX V, W^*\bY W)(V^*\bZ W) \leq \delta(\bX,\bY)(\bZ ).
\]
In particular, if $\bX,\bY \in \Omega(n)$, then for every isometry $V \colon \C^k \to \C^n$:
\[
\widetilde{\delta}(V^* \bX V,V^* \bY V) \leq \widetilde{\delta}(\bX,\bY).
\]
\end{lem}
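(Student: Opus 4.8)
The plan is to argue directly from the defining formula for the nc Lempert function, reducing everything to a single application of matrix convexity.

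First I would fix $s>0$ with $\left(\begin{smallmatrix}\bX & s\bZ\\0 & \bY\end{smallmatrix}\right) \in \Omega(n+m)$ and show that the compressed block matrix $\left(\begin{smallmatrix}V^*\bX V & s\, V^*\bZ W\\0 & W^*\bY W\end{smallmatrix}\right)$ again lies in $\Omega$. The key observation is that the block-diagonal map $U = V \oplus W \colon \C^{k+\ell} \to \C^{n+m}$ is an isometry, since $U^*U = V^*V \oplus W^*W = I_{k+\ell}$, and a direct computation gives
\[
U^* \begin{pmatrix}\bX & s\bZ\\0 & \bY\end{pmatrix} U = \begin{pmatrix}V^*\bX V & s\, V^*\bZ W\\0 & W^*\bY W\end{pmatrix}.
\]
Because $\Omega$ is matrix convex, compression by the isometry $U$ keeps us inside $\Omega$, so the right-hand side lies in $\Omega(k+\ell)$.

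Then I would feed this into the definition of $\delta$: for every $s \in [0,t)$ with $\left(\begin{smallmatrix}\bX & s\bZ\\0 & \bY\end{smallmatrix}\right) \in \Omega$, the previous paragraph shows $\left(\begin{smallmatrix}V^*\bX V & s\, V^*\bZ W\\0 & W^*\bY W\end{smallmatrix}\right)\in\Omega$, so the set of admissible $t$ in the supremum defining $\delta(V^*\bX V, W^*\bY W)(V^*\bZ W)$ contains the corresponding set for $\delta(\bX,\bY)(\bZ)$. Hence that supremum is at least as large, and taking reciprocals reverses the inequality, giving $\delta(V^*\bX V, W^*\bY W)(V^*\bZ W) \le \delta(\bX,\bY)(\bZ)$. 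For the ``in particular'' statement, apply this with $W = V$, $m = n$, $\ell = k$, and $\bZ = \bX - \bY$, using $V^*(\bX - \bY)V = V^*\bX V - V^*\bY V$, so that
\[
\widetilde{\delta}(V^*\bX V, V^*\bY V) = \delta(V^*\bX V, V^*\bY V)(V^*\bX V - V^*\bY V) \le \delta(\bX,\bY)(\bX - \bY) = \widetilde{\delta}(\bX,\bY).
\]

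There is essentially no serious obstacle here; the only points requiring care are: (i) that $V \oplus W$ really is an isometry and that the compression identity respects the upper-triangular block pattern — which is why the ordering of the blocks in $U$ must match that of $\left(\begin{smallmatrix}\bX & s\bZ\\0 & \bY\end{smallmatrix}\right)$; and (ii) that the supremum defining $\delta$ behaves monotonically under enlarging the admissible set, including the degenerate cases where it equals $0$ or $\infty$ (the inequality then being trivial, with the convention $\infty^{-1} = 0$). Boundedness and openness of $\Omega$ play no role in the inequality itself; they serve only to make $\delta$ the genuinely useful object of \cite{BelVin17}.
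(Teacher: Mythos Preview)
Your proposal is correct and follows essentially the same argument as the paper: compress the upper-triangular block matrix by the isometry $V\oplus W$, use matrix convexity to stay in $\Omega$, and conclude that the admissible set of $t$'s for the compressed data contains that for the original, whence the inequality after taking reciprocals. Your handling of the ``in particular'' clause and of the degenerate $0/\infty$ cases is also in line with (indeed slightly more explicit than) the paper's treatment.
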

\begin{proof}
Note that for $t > 0$
\[
\begin{pmatrix} V^* & 0 \\ 0 & W^* \end{pmatrix} \begin{pmatrix} \bX  &  t\bZ \\ 0 & \bY \end{pmatrix} \begin{pmatrix}  V & 0 \\ 0 & W \end{pmatrix} = \begin{pmatrix} V^* \bX V & t V^* \bZ W \\ 0 & W^* \bY W \end{pmatrix}.
\]
Hence, if $\left( \begin{smallmatrix} \bX  &  t \bZ  \\ 0 & \bY  \end{smallmatrix} \right) \in \Omega(n+m)$, then since $\Omega$ is matrix convex and $V \oplus W$ is an isometry, we conclude that $\left( \begin{smallmatrix} V^* \bX V & t V^* \bZ W \\ 0 & W^* \bY W \end{smallmatrix} \right) \in \Omega(n+m)$. Hence
\begin{multline*}
A = \left\{ t \in [0,\infty] \mid \begin{pmatrix} \bX  &  s \bZ \\ 0 & \bY \end{pmatrix} \in \Omega(n+m),\, \text{for all } s\in [0,t) \right\} \subseteq \\ \left\{ t \in [0,\infty] \mid \begin{pmatrix} V^* \bX V & s V^* \bZ W \\ 0 & W^* \bY W \end{pmatrix} \in \Omega(n+m),\, \text{for all } s\in [0,t) \right\} =B.
\end{multline*}
Since $\delta(V^* \bX V, W^* \bY W)(V^* \bZ W) = \left(\sup B \right)^{-1}$ and $\delta(\bX,\bY)(\bZ) = \left(\sup A \right)^{-1}$, we get the desired inequality.

The second inequality follows immediately from the first.
\end{proof}

\begin{cor} \label{cor:no_infty_needed}
Let $\Omega$ be an open matrix convex domain and let $\bX, \bY \in \Omega(n)$, then
\[
\widetilde{d}_{\Omega}(\bX,\bY) = \widetilde{d}_{\Omega,\infty}(\bX,\bY),\, d_{\Omega}(\bX,\bY) = d_{\Omega,\infty}(\bX,\bY).
\]
\end{cor}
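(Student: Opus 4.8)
The plan is to establish each of the two equalities by proving two inequalities. The inequalities $\widetilde{d}_{\Omega,\infty}(\bX,\bY) \le \widetilde{d}_{\Omega}(\bX,\bY)$ and $d_{\Omega,\infty}(\bX,\bY) \le d_{\Omega}(\bX,\bY)$ are immediate: taking $\ell = 1$ in the definitions of $\widetilde{d}_{\Omega,\infty}$ and $d_{\Omega,\infty}$ shows that the admissible families of chains (respectively paths) for the ``$\infty$'' distances contain those for the level-$n$ distances, with the same quantity being minimized, so the former infima cannot be larger. All the content is therefore in the reverse inequalities, which I would obtain by compressing a chain or path living at level $\ell n$ down to level $n$ by a single isometry and controlling the effect on $\widetilde{\delta}$ (respectively on the Finsler integrand) by Lemma \ref{lem:delta_and_isom}.

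Fix $\ell \in \N$ and let $V \colon \C^n \to \C^{\ell n} \cong \C^\ell \otimes \C^n$ be the isometry $V = e_1 \otimes I_n$ onto the first block. A direct computation in the identification $M_{\ell n} = M_\ell \otimes M_n$ gives $V^*(I_\ell \otimes \bX)V = \bX$ and $V^*(I_\ell \otimes \bY)V = \bY$, while matrix convexity of $\Omega$ gives $V^*\bZ V \in \Omega(n)$ for every $\bZ \in \Omega(\ell n)$. Hence, given a chain $I_\ell \otimes \bX = \bZ_0, \bZ_1, \dots, \bZ_m = I_\ell \otimes \bY$ in $\Omega(\ell n)$, the compressions $\bX = V^*\bZ_0 V, V^*\bZ_1 V, \dots, V^*\bZ_m V = \bY$ form a chain in $\Omega(n)$, and the ``in particular'' assertion of Lemma \ref{lem:delta_and_isom} yields $\widetilde{\delta}(V^*\bZ_{k-1}V, V^*\bZ_k V) \le \widetilde{\delta}(\bZ_{k-1},\bZ_k)$ for each $k$. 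Summing over $k$ and then taking the infimum over all chains and all $\ell$ gives $\widetilde{d}_{\Omega}(\bX,\bY) \le \widetilde{d}_{\Omega,\infty}(\bX,\bY)$, which is the first equality.

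For the path-length distances I would run the same compression on paths: given $\gamma \in C^1_{pw}([0,1],\Omega(\ell n))$ with $\gamma(0) = I_\ell \otimes \bX$ and $\gamma(1) = I_\ell \otimes \bY$, put $\widetilde{\gamma}(t) = V^*\gamma(t)V$. Since compression by $V$ is a fixed bounded linear map, $\widetilde{\gamma}$ is again piecewise $C^1$, with $\widetilde{\gamma}'(t) = V^*\gamma'(t)V$; moreover it takes values in $\Omega(n)$ by matrix convexity and has endpoints $\bX$ and $\bY$. Applying the first assertion of Lemma \ref{lem:delta_and_isom} pointwise, with both isometries there taken equal to $V$, with $\bX = \bY = \gamma(t)$ and $\bZ = \gamma'(t)$, gives $\delta(\widetilde{\gamma}(t),\widetilde{\gamma}(t))(\widetilde{\gamma}'(t)) \le \delta(\gamma(t),\gamma(t))(\gamma'(t))$ for every $t$, hence $\int_0^1 \delta(\widetilde{\gamma},\widetilde{\gamma})(\widetilde{\gamma}')\,dt \le \int_0^1 \delta(\gamma,\gamma)(\gamma')\,dt$; taking the infimum over $\gamma$ and $\ell$ gives $d_{\Omega}(\bX,\bY) \le d_{\Omega,\infty}(\bX,\bY)$. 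I do not expect a genuine obstacle: the only points needing a line of care are the identity $V^*(I_\ell\otimes\bX)V = \bX$ under the tensor-product identification, the fact that compression preserves piecewise-$C^1$ paths and commutes with differentiation, and the remark that although Lemma \ref{lem:delta_and_isom} is stated for bounded domains its proof uses only matrix convexity and so applies here without change.
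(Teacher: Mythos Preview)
Your proposal is correct and follows essentially the same approach as the paper's proof: both establish the easy inequalities by taking $\ell=1$, and for the reverse inequalities both compress a chain or path at level $\ell n$ to level $n$ via the isometry onto the first block and invoke Lemma~\ref{lem:delta_and_isom} term-by-term (respectively pointwise). Your remark that the boundedness hypothesis in Lemma~\ref{lem:delta_and_isom} is not actually used is accurate and worth noting.
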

\begin{proof}
By definition $\widetilde{d}_{\Omega,\infty}(\bX,\bY) \leq \widetilde{d}_{\Omega}(\bX,\bY)$ and $d_{\Omega,\infty}(\bX,\bY) \leq d_{\Omega}(\bX,\bY)$ (see also \cite{BelVin17}). Now let $\epsilon >0$ be arbitrary. We can find $\ell, m \in \N$ and $I_{\ell} \otimes \bX = \bZ_0,\bZ_1,\ldots,\bZ_m = I_{\ell} \otimes \bY \in \Omega(\ell n)$, such that
\[
\sum_{k=1}^m \widetilde{\delta}(\bZ_{k-1},\bZ_k) < \widetilde{d}_{\Omega,\infty}(\bX,\bY) + \epsilon.
\]
Let $V \colon \C^n \to \C^{\ell n}$ be the isometry that embeds $\C^n$ as the subspace spanned by the first $n$ standard basis vectors. Then $V^* \bZ_0 V = \bX$, $V^* \bZ_m V = \bY$, and since $\Omega$ is matrix convex for every $1 \leq k \leq m-1$, $V^* \bZ_k V \in \Omega(n)$. By Lemma \ref{lem:delta_and_isom} we have that for every $1 \leq k \leq m$, $\widetilde{\delta}(V^* \bZ_{k-1} V, V^* \bZ_k V) \leq \widetilde{\delta}(\bZ_{k-1},\bZ_k)$ and thus
\[
\sum_{k=1}^m \widetilde{\delta}(V^*\bZ_{k-1} V,V^*\bZ_k V) < \widetilde{d}_{\Omega,\infty}(\bX,\bY) + \epsilon.
\]
This proves that $\widetilde{d}_{\Omega,\infty}(\bX,\bY) = \widetilde{d}_{\Omega}(\bX,\bY)$.

Similarly, let $\gamma \in C^1_{pw}([0,1],\Omega(\ell n))$ be such that $\gamma(0) = I_{\ell} \otimes \bX$, $\gamma(1) = I_{\ell} \otimes \bY$, and
\[
\int_0^1 \delta(\gamma(t),\gamma(t))(\gamma'(t)) < d_{\Omega,\infty}(\bX,\bY) + \epsilon.
\]
Let $V$ be as above and set $\beta = V^* \gamma V \in C^1_{pw}([0,1],\Omega(n))$. Then, $\beta(0) = \bX$, $\beta(1) = \bY$, and $\beta'(t) = V^* \gamma'(t) V$. Thus by Lemma \ref{lem:delta_and_isom}, 
\[
\int_0^1 \delta(\beta(t),\beta(t))(\beta'(t)) \leq \int_0^1 \delta(\gamma(t),\gamma(t))(\gamma'(t)) < d_{\Omega,\infty}(\bX,\bY) + \epsilon.
\]
Thus $d_{\Omega,\infty}(\bX,\bY) = d_{\Omega}(\bX,\bY)$.
\end{proof}

\begin{lem} \label{lem:delta_conv_comb}
Let $\Omega$ be an open matrix convex domain. Let $\bX, \bY \in \Omega(n)$ and $\bA \in M_n(E)$. Then for every $0 < t < 1$
\[
\delta(t \bX +(1-t)\bY, t\bX + (1-t) \bY)(\bA) \leq \frac{1}{2 \sqrt{t (1-t)}}\max\{ \delta(\bX,\bY)(\bA),\delta(\bY, \bX)(\bA)\}.
\]
\[
\delta(t \bX +(1-t)\bY, t\bX + (1-t) \bY)(\bA) \leq \max\{ \delta(\bX,\bX)(\bA), \delta(\bY,\bY)(\bA) \}.
\]
\end{lem}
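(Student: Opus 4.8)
The plan is to derive both inequalities from matrix convexity of $\Omega$ together with the definition of the nc Lempert function, by exhibiting the $2n\times 2n$ block matrix $\bigl(\begin{smallmatrix}\bC & c\,s\,\bA\\ 0 & \bC\end{smallmatrix}\bigr)$ — where $\bC:=t\bX+(1-t)\bY$ and $c$ is the relevant constant — as a compression $V^*MV$ of some $M\in\Omega(4n)$ by an isometry $V\colon\C^{2n}\to\C^{4n}$. Matrix convexity then forces $V^*MV\in\Omega(2n)$, and taking the supremum over the admissible off-diagonal parameter in the definition of $\delta$ converts this containment into the claimed bound. The constant $c$ is $1$ in the second inequality and $2\sqrt{t(1-t)}$ in the first; the difference is entirely a matter of which diagonal blocks of $M$ the isometry mixes together.

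For the second inequality, fix $s\ge 0$ with $\bigl(\begin{smallmatrix}\bX & s\bA\\0&\bX\end{smallmatrix}\bigr),\bigl(\begin{smallmatrix}\bY & s\bA\\0&\bY\end{smallmatrix}\bigr)\in\Omega(2n)$. Since $\Omega$ is an nc set, their direct sum $M$ — a $4\times4$ array of $n\times n$ blocks, upper triangular, with diagonal $(\bX,\bX,\bY,\bY)$ and $s\bA$ in slots $(1,2)$ and $(3,4)$ — lies in $\Omega(4n)$. Take $V$ with block columns $(\sqrt t\,I_n,0,\sqrt{1-t}\,I_n,0)$ and $(0,\sqrt t\,I_n,0,\sqrt{1-t}\,I_n)$. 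A $2\times 2$ block computation gives $V^*MV=\bigl(\begin{smallmatrix}\bC & s\bA\\0&\bC\end{smallmatrix}\bigr)$: both diagonal blocks equal $t\bX+(1-t)\bY$; the $(2,1)$ block vanishes because $M$ is a direct sum of block-upper-triangular matrices while $V$ carries the new coordinates $1,2$ into the old spans $\{1,3\},\{2,4\}$; and the $(1,2)$ block is $\bigl(t+(1-t)\bigr)s\bA=s\bA$. Hence $\bigl(\begin{smallmatrix}\bC & s\bA\\0&\bC\end{smallmatrix}\bigr)\in\Omega(2n)$ for every $s$ strictly below both $\delta(\bX,\bX)(\bA)^{-1}$ and $\delta(\bY,\bY)(\bA)^{-1}$, so $\delta(\bC,\bC)(\bA)^{-1}\ge\min\{\delta(\bX,\bX)(\bA)^{-1},\delta(\bY,\bY)(\bA)^{-1}\}$, which is the second assertion.

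For the first inequality the recipe is the same, but starting from $\bigl(\begin{smallmatrix}\bX & s\bA\\0&\bY\end{smallmatrix}\bigr),\bigl(\begin{smallmatrix}\bY & s\bA\\0&\bX\end{smallmatrix}\bigr)\in\Omega(2n)$, so that $M\in\Omega(4n)$ has diagonal $(\bX,\bY,\bY,\bX)$ with $s\bA$ again in slots $(1,2),(3,4)$. Now choose $V$ with block columns $(\sqrt t\,I_n,0,\sqrt{1-t}\,I_n,0)$ and $(0,\sqrt{1-t}\,I_n,0,\sqrt t\,I_n)$, so that new coordinate $1$ mixes the $\bX$ of the first summand with the $\bY$ of the second and new coordinate $2$ mixes the $\bY$ of the first summand with the $\bX$ of the second. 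One computes $V^*MV=\bigl(\begin{smallmatrix}\bC & 2\sqrt{t(1-t)}\,s\bA\\0&\bC\end{smallmatrix}\bigr)$: the diagonal blocks are again $\bC$, the $(2,1)$ block vanishes exactly as before, and the $(1,2)$ block receives $\sqrt{t(1-t)}\,s\bA$ from each of the two couplings. So $\bigl(\begin{smallmatrix}\bC & 2\sqrt{t(1-t)}\,s\bA\\0&\bC\end{smallmatrix}\bigr)\in\Omega(2n)$ whenever $s<\min\{\delta(\bX,\bY)(\bA)^{-1},\delta(\bY,\bX)(\bA)^{-1}\}$, hence $\delta(\bC,\bC)(\bA)^{-1}\ge 2\sqrt{t(1-t)}\,\min\{\delta(\bX,\bY)(\bA)^{-1},\delta(\bY,\bX)(\bA)^{-1}\}$, which rearranges to the first estimate.

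The only real decision is the design of the two isometries so that compressing the block-upper-triangular $M$ keeps the lower-left block zero while rescaling the off-diagonal coupling by exactly $1$ (when each diagonal block is coupled to an equal one) or $2\sqrt{t(1-t)}$ (when $\bX$ is coupled to $\bY$); everything else is the $2\times 2$ block multiplication indicated above and the bookkeeping of suprema in the definition of $\delta$, along the lines of Lemma~\ref{lem:delta_and_isom}. One should also record, by the same compression argument applied to $M=\bX\oplus\bY$, that $t\bX+(1-t)\bY\in\Omega(n)$, so that the left-hand sides are defined.
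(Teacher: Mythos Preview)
Your proof is correct and follows essentially the same approach as the paper's: both compress an element of $\Omega(4n)$ by an isometry built from the weights $\sqrt{t},\sqrt{1-t}$ to land on $\bigl(\begin{smallmatrix}\bC & c\,s\bA\\0&\bC\end{smallmatrix}\bigr)\in\Omega(2n)$. The paper packages this via Lemma~\ref{lem:delta_and_isom} applied with $V_t=\bigl(\begin{smallmatrix}\sqrt{t}\,I_n\\\sqrt{1-t}\,I_n\end{smallmatrix}\bigr)$ and invokes \cite[Lemma~3.5]{BelVin17} for the direct-sum decomposition of $\delta$, whereas you arrange $M$ directly as a direct sum of two $2n\times 2n$ upper-triangular blocks and compute the compression by hand; this is a minor reorganization rather than a different argument.
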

\begin{proof}
Let us define for every $0 < t < 1$, an isometry $V_t \colon \C^n \to \C^{2n}$:
\[
V_t = \begin{pmatrix} \sqrt{t} I_n \\ \sqrt{1-t} I_n \end{pmatrix}
\]
Then
\[
t\bX + (1-t)\bY = V_t^* (\bX \oplus \bY) V_t
\]
 Applying Lemma \ref{lem:delta_and_isom} and \cite[Lemma 3.5]{BelVin17}, we get
\begin{multline*}
\delta(V_t^* (\bX \oplus \bY) V_t, V_t^* (\bX \oplus \bY) V_t)\left( \frac{1}{2 \sqrt{t (1-t)}}V_t^*  \begin{pmatrix} 0 & \bA \\ \bA & 0 \end{pmatrix} V_t \right) \leq \\  \frac{1}{2 \sqrt{t (1-t)}} \delta(\bX \oplus \bY, \bX \oplus \bY)\left( \begin{pmatrix} 0 & \bA \\ \bA & 0 \end{pmatrix} \right) =  \frac{1}{2 \sqrt{t (1-t)}}\max\{ \delta(\bX,\bY)(\bA),\delta(\bY, \bX)(\bA)\}.
\end{multline*}
The proof of the second inequality is identical. We only note that $\bA = V_t^* (\bA \oplus \bA) V_t$.
\end{proof}

\begin{thm} \label{thm:conv_one_dist}
Let $\Omega$ be an open matrix convex domain. Then for every $\bX, \bY \in U(n)$, we have that
\[
\widetilde{d}_{\Omega}(\bX,\bY) \leq d_{\Omega}(\bX,\bY) \leq \pi \widetilde{d}_{\Omega}(\bX,\bY).
\]
In other words, the metrics $d_{\Omega}$ and $\widetilde{d}_{\Omega}$ are equivalent.
\end{thm}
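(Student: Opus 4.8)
The plan is to prove the two inequalities separately. The left inequality $\widetilde{d}_{\Omega}(\bX,\bY) \leq d_{\Omega}(\bX,\bY)$ is the easy one: given any piecewise-$C^1$ path $\gamma$ from $\bX$ to $\bY$ in $\Omega(n)$, one partitions $[0,1]$ finely, $0 = s_0 < s_1 < \cdots < s_m = 1$, and sets $\bZ_k = \gamma(s_k)$. Using the estimate $\widetilde{\delta}(\bZ_{k-1},\bZ_k) = \delta(\bZ_{k-1},\bZ_k)(\bZ_{k-1}-\bZ_k)$ together with the (local Lipschitz / infinitesimal) comparison between $\widetilde{\delta}$ of nearby points and the integral $\int_{s_{k-1}}^{s_k} \delta(\gamma(t),\gamma(t))(\gamma'(t))\,dt$ — which follows from the formula in Lemma \ref{lem:D_Q_nc_lempert}, the local boundedness of $Q$ and its derivative, and continuity of $D_{Q(\gamma(t))}^{-1/2}$ along the compact curve — one gets $\sum_k \widetilde{\delta}(\bZ_{k-1},\bZ_k) \le \int_0^1 \delta(\gamma(t),\gamma(t))(\gamma'(t))\,dt + o(1)$ as the mesh tends to $0$. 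Taking the infimum over $\gamma$ yields $\widetilde{d}_{\Omega}(\bX,\bY) \le d_{\Omega}(\bX,\bY)$. (Alternatively, since every matrix convex $\Omega$ is a $\D_Q$ by the remark after Definition \ref{def:H_L}, one may transfer the computation to the operator-ball side.)

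For the right inequality $d_{\Omega}(\bX,\bY) \le \pi\,\widetilde{d}_{\Omega}(\bX,\bY)$, the idea is that it suffices to prove, for any single pair $\bZ_{k-1},\bZ_k \in \Omega(n)$, the bound $d_{\Omega}(\bZ_{k-1},\bZ_k) \le \pi\,\widetilde{\delta}(\bZ_{k-1},\bZ_k)$; summing over a near-optimal chain for $\widetilde{d}_{\Omega}$ and using the triangle inequality for $d_\Omega$ then gives the result. So I would fix $\bX, \bY \in \Omega(n)$ and exhibit an explicit piecewise-$C^1$ path from $\bX$ to $\bY$ whose $\delta$-length is at most $\pi\,\widetilde{\delta}(\bX,\bY)$. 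The natural candidate is the straight segment $\gamma(t) = (1-t)\bX + t\bY = \bY + (1-t)(\bX - \bY)$, which stays in $\Omega(n)$ by matrix convexity. Its length is $\int_0^1 \delta(\gamma(t),\gamma(t))(\bX-\bY)\,dt$, and the first inequality of Lemma \ref{lem:delta_conv_comb} (with the roles of $\bX,\bY$ there played by $\bX,\bY$ and $\bA = \bX - \bY$, at parameter $1-t$) gives
\[
\delta(\gamma(t),\gamma(t))(\bX-\bY) \le \frac{1}{2\sqrt{t(1-t)}}\max\{\delta(\bX,\bY)(\bX-\bY),\,\delta(\bY,\bX)(\bX-\bY)\} = \frac{\widetilde{\delta}(\bX,\bY)}{2\sqrt{t(1-t)}},
\]
where the last equality uses symmetry of $\widetilde\delta$ (recorded in the remark after Lemma \ref{lem:H_L_nc_lempert}) and $\delta(\bX,\bY)(\bX-\bY) = \widetilde\delta(\bX,\bY)$. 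Hence
\[
d_{\Omega}(\bX,\bY) \le \widetilde{\delta}(\bX,\bY)\int_0^1 \frac{dt}{2\sqrt{t(1-t)}} = \widetilde{\delta}(\bX,\bY)\cdot\frac{\pi}{2}.
\]

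This actually yields the sharper constant $\pi/2$ rather than $\pi$; either way, applying this to each link of an $\epsilon$-optimal chain $\bX = \bZ_0,\dots,\bZ_m = \bY$ for $\widetilde{d}_\Omega$ and summing gives $d_{\Omega}(\bX,\bY) \le \tfrac{\pi}{2}\sum_k \widetilde{\delta}(\bZ_{k-1},\bZ_k) < \tfrac{\pi}{2}(\widetilde{d}_{\Omega}(\bX,\bY)+\epsilon)$, and letting $\epsilon \to 0$ finishes it. Combined with the first inequality, $d_\Omega$ and $\widetilde{d}_\Omega$ are equivalent. The one point requiring a little care — and the place I expect to spend the most effort — is the first inequality: one must justify that the Riemann-sum comparison between $\sum_k \widetilde{\delta}(\gamma(s_{k-1}),\gamma(s_k))$ and $\int_0^1\delta(\gamma(t),\gamma(t))(\gamma'(t))\,dt$ is legitimate, i.e. that $\widetilde{\delta}$ is comparable to the $\delta$-length of the short sub-arc up to $o(\text{mesh})$; this uses the explicit operator-ball formula, uniform continuity of $t \mapsto D_{Q(\gamma(t))}^{\pm 1/2}$ and $t\mapsto \Delta Q(\gamma(t),\gamma(t))$ on the compact image of $\gamma$, and the differentiability of $Q$, so that $Q(\gamma(s_k)) - Q(\gamma(s_{k-1})) = \Delta Q(\gamma(s_{k-1}),\gamma(s_k))(\gamma(s_k)-\gamma(s_{k-1}))$ is well-approximated by $\int_{s_{k-1}}^{s_k}\Delta Q(\gamma(t),\gamma(t))(\gamma'(t))\,dt$.
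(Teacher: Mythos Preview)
Your approach is essentially the same as the paper's: for the right inequality you use straight-line segments together with Lemma~\ref{lem:delta_conv_comb}, and for the left inequality the paper simply cites \cite{BelVin17} rather than carrying out the Riemann-sum comparison you sketch. Your evaluation $\int_0^1 \frac{dt}{2\sqrt{t(1-t)}} = \tfrac{\pi}{2}$ is correct and genuinely sharpens the constant---the paper's claim that this integral equals $\pi$ is an arithmetic slip---so your observation that one actually gets $d_\Omega \le \tfrac{\pi}{2}\,\widetilde d_\Omega$ is a real improvement; the one small point you gloss over is that $\delta(\bY,\bX)(\bX-\bY) = \delta(\bY,\bX)(\bY-\bX)$, which follows from unitary invariance of $\Omega$ via conjugation by $\operatorname{diag}(I,-I)$.
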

\begin{proof}
By \cite{BelVin17}, $\widetilde{d}_{\Omega}(\bX,\bY) \leq d_{\Omega}(\bX,\bY)$. Now let $\epsilon >0$ and find $\bX = \bZ_0,\bZ_1,\ldots,\bZ_m = \bY \in \Omega(n)$, such that
\[
\sum_{k=1}^m \widetilde{\delta}(\bZ_{k-1},\bZ_k) < \widetilde{d}_{\Omega}(\bX,\bY) + \frac{\epsilon}{\pi}.
\]
Now consider the paths $\beta_k(t) = t \bZ_k + (1-t) \bZ_{k-1}$ and concatenate them to obtain a piecewise smooth path $\beta \colon [0,1] \to \Omega(n)$, such that $\beta(0) =\bX$ and $\beta(1) = \bY$. By Lemma \ref{lem:delta_conv_comb} we have that
\[
\int_0^1 \delta(\beta_k(t),\beta_k(t))(\beta_k'(t)) dt \leq \widetilde{\delta}(\bZ_{k-1},\bZ_k) \int_0^1 \frac{dt}{2 \sqrt{t(1-t)}} = \pi \widetilde{\delta}(\bZ_{k-1},\bZ_k)
\]
Thus,
\[
\int_0^1 \delta(\beta(t),\beta(t))(\beta'(t))dt < \pi \widetilde{d}_{\Omega}(\bX,\bY) + \epsilon.
\]
Since $\epsilon$ was arbitrary, we get the result.
\end{proof}

Now we collect a few useful properties of the distance $d_{\Omega}$.

\begin{prop} \label{lem:metric_properties}
Let $\Omega$ be a uniformly bounded and open matrix convex domain and $\bX, \bY, \bZ,\bW \in \Omega(n)$. Then
\begin{itemize}
\item[(i)] For $V \in U_n$ a unitary, we have $d_{\Omega}(V^* \bX V, V^* \bY V) = d_{\Omega}(\bX,\bY)$.

\item[(ii)] If $V \colon \C^k \to \C^n$ is an isometry, then $d_{\Omega}(V^* \bX V, V^* \bY V) \leq d_{\Omega}(\bX, \bY)$

\item[(iii)] $d_{\Omega}(\bX \oplus \bY, \bZ \oplus \bW) \geq \max\{ d_{\Omega}(\bX,\bZ), d_{\Omega}(\bY,\bW)\}$.
\end{itemize}
\end{prop}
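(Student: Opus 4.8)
The plan is to verify each of the three items by unwinding the definition of $d_\Omega$ as an infimum of path integrals $\int_0^1 \delta(\gamma(t),\gamma(t))(\gamma'(t))\,dt$, and transporting candidate paths through the relevant operations while controlling the integrand via Lemma \ref{lem:delta_and_isom} (and, for item (iii), via the direct-sum property of $\delta$).

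\medskip
\noindent\textit{Item (i).} For a unitary $V\in U_n$, both $V$ and $V^*$ are isometries, so Lemma \ref{lem:delta_and_isom} applied with $W=V$ gives $\delta(V^*\bX V, V^*\bY V)(V^*\bZ V)\le \delta(\bX,\bY)(\bZ)$ for all $\bX,\bY,\bZ$; applying it again with $V$ replaced by $V^*$ (which is also a unitary, hence an isometry) yields the reverse inequality, so in fact $\delta(V^*\bX V, V^*\bY V)(V^*\bZ V)=\delta(\bX,\bY)(\bZ)$. Now if $\gamma$ is any path in $\Omega(n)$ from $\bX$ to $\bY$, then $V^*\gamma V$ is a path from $V^*\bX V$ to $V^*\bY V$ (it stays in $\Omega$ since $\Omega$ is invariant under unitary similarity), with $(V^*\gamma V)'=V^*\gamma' V$, and the integrand is unchanged pointwise by the equality just noted. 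Hence $d_\Omega(V^*\bX V, V^*\bY V)\le d_\Omega(\bX,\bY)$; the reverse inequality follows by conjugating back with $V^*$.

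\medskip
\noindent\textit{Item (ii).} Let $V\colon\C^k\to\C^n$ be an isometry and let $\gamma$ be a path in $\Omega(n)$ from $\bX$ to $\bY$. Then $\beta:=V^*\gamma V$ is a piecewise-$C^1$ path from $V^*\bX V$ to $V^*\bY V$; it lies in $\Omega(k)$ because $\Omega$ is matrix convex. We have $\beta'(t)=V^*\gamma'(t)V$, and by Lemma \ref{lem:delta_and_isom} (applied with $W=V$, $\bX=\bY=\gamma(t)$, $\bZ=\gamma'(t)$),
\[
\delta(\beta(t),\beta(t))(\beta'(t))=\delta(V^*\gamma(t)V,V^*\gamma(t)V)(V^*\gamma'(t)V)\le \delta(\gamma(t),\gamma(t))(\gamma'(t)).
\]
Integrating and taking the infimum over $\gamma$ gives $d_\Omega(V^*\bX V,V^*\bY V)\le d_\Omega(\bX,\bY)$.

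\medskip
\noindent\textit{Item (iii).} Observe that the coordinate embedding $V\colon\C^n\to\C^{2n}$ onto the first $n$ coordinates is an isometry with $V^*(\bX\oplus\bY)V=\bX$ and $V^*(\bZ\oplus\bW)V=\bZ$; likewise the embedding onto the last $n$ coordinates extracts the second summand. Applying item (ii) with these two isometries gives $d_\Omega(\bX\oplus\bY,\bZ\oplus\bW)\ge d_\Omega(\bX,\bZ)$ and $d_\Omega(\bX\oplus\bY,\bZ\oplus\bW)\ge d_\Omega(\bY,\bW)$, hence the maximum bound. (Alternatively one runs the path argument directly: a path $\gamma$ from $\bX\oplus\bY$ to $\bZ\oplus\bW$ projects under the two embeddings to paths whose lengths are each dominated by the length of $\gamma$, using Lemma \ref{lem:delta_and_isom}.)

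\medskip
\noindent The only point requiring a little care — and the one I would expect to be the main obstacle — is making sure the transported paths actually remain inside $\Omega$ at the correct level: this is exactly where matrix convexity (for compressions by isometries) and unitary invariance (for conjugation by unitaries) of $\Omega$ are used, and it is also why item (i) gives an equality while items (ii) and (iii) give only inequalities. Everything else is a routine change of variables in the path integral combined with the pointwise estimate from Lemma \ref{lem:delta_and_isom}.
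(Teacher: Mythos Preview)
Your proof is correct and follows essentially the same approach as the paper: transport paths through the relevant operation, control the integrand via Lemma \ref{lem:delta_and_isom} (for (i) you derive the equality of $\delta$ under unitary conjugation from two applications of that lemma, whereas the paper cites \cite{BelVin17} directly), and for (iii) reduce to (ii) via the coordinate-embedding isometries. The only cosmetic difference is that you spell out the path argument for (ii) while the paper simply says it follows immediately from Lemma \ref{lem:delta_and_isom}.
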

\begin{proof}
Given $\epsilon > 0$, let $\gamma \colon [0,1] \to \Omega(n)$ be a piecewise continuously differentiable path, such that $\gamma(0) = \bX$, $\gamma(1) = \bY$, and 
\[
\int_0^1 \delta(\gamma(t),\gamma(t))(\gamma'(t))dt < d_{\Omega}(\bX,\bY) + \epsilon.
\]
Now note that $\delta(V^* \gamma(t) V, V^* \gamma(t) V)(V^* \gamma'(t) V) = \delta(\gamma(t),\gamma(t))(\gamma'(t))$ by \cite{BelVin17}. Thus taking the path $V^* \gamma V$, we get that $d_{\Omega}(V^* \bX V, V^* \bY V) < d_{\Omega}(\bX,\bY) + \epsilon$. Since $\epsilon > 0$ was arbitrary, we get that $d_{\Omega}(V^* \bX V, V^* \bY V) \leq d_{\Omega}(\bX,\bY)$. Since $\operatorname{Ad}_{V^*}$ is invertible, we get the converse inequality. 

The second item follows immediately from Lemma \ref{lem:delta_and_isom}. For the third item, let $V \colon \C^n \to \C^{2n}$ be an isometry that embeds $\C^n$ as the span of the first $n$ standard basis vectors. Then 
\[
d_{\Omega}(\bX,\bZ) = d_{\Omega}( V^* (\bX \oplus \bY) V, V^* (\bZ \oplus \bW) V) \leq d_{\Omega}(\bX \oplus \bY,\bZ \oplus \bW).
\]
Similarly $d_{\Omega}(\bY,\bW) \leq d_{\Omega}(\bX \oplus \bY,\bZ \oplus \bW)$.
\end{proof}

\section{Noncommutative Denjoy-Wolff theorems} \label{sec:denjoy_wolff}

Let $f \colon \D \to \D$ be a holomorphic function that is not an automorphism and has no fixed points. In order to prove the classical Denjoy-Wolff theorem, one applies the Montel theorem to obtain convergent subsequences of the sequence $f_n$ of iterations of $f$. We can then split the rest of the proof into two pieces. In the first part, we apply the maximum modulus principle to show that a partial limit of the sequence $f_n$ is either constant or maps $\D$ into itself. Then one applies the Wolff lemma to show that in the constant case, the value is the Wolff point of $f$ and works to rule out the second option. This observation leads us to try and extend the Denjoy-Wolff-type result of Abate and Raissy to the noncommutative setting.

\begin{defn}
Let $\Omega \subset \M_d$ be an nc domain. We will say that $\Omega$ has a simple nc boundary if for every nc holomorphic function $f \colon \Omega' \subset \M_k \to \overline{\Omega}$, if $f(\Omega') \cap \partial \Omega \neq \emptyset$, then $f$ is constant.
\end{defn}

\begin{example}
By the maximum modulus principle \cite[Lemma 6.1]{3S1}, the row ball $\fB_d$ has a simple nc boundary. 
\end{example}

The following is an extension of the result of Abate and Raissy \cite[Theorem 2]{AbateRaissy} to the noncommutative setting.

\begin{thm}\label{thm:nc_denjoy_wolff}
Let $\Omega \subset \M_d$ be a uniformly bounded matrix convex domain with a simple nc boundary. Assume that $\Omega(1)$ is strictly convex. Let $f \colon \Omega \to \Omega$ be an nc map with no fixed points. Then there exists $\zeta \in \partial \Omega(1)$, such that the sequence of iterates $f^{\circ\, n}$ converges to the constant function $\zeta$, where $$f^{\circ\, n} = \underbrace{f \circ f \circ \cdots \circ f}_{n \text{ times}}.$$
\end{thm}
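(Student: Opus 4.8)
The plan is to mimic the classical proof of the Denjoy--Wolff theorem at the level $\Omega(1)$, but leverage the nc structure (direct sums, similarities) to propagate everything to higher levels. First I would observe that $f_1 = f|_{\Omega(1)}$ is a holomorphic self-map of the bounded strictly convex domain $\Omega(1)$ with no fixed point, so by the classical Abate--Raissy theorem \cite[Theorem 2]{AbateRaissy} (applicable since $\Omega(1)$ is bounded, convex, and strictly convex) there is a point $\zeta \in \partial\Omega(1)$ such that $f_1^{\circ n} \to \zeta$ uniformly on compacta of $\Omega(1)$. Note $f^{\circ n}$ restricted to level $1$ is exactly $f_1^{\circ n}$, so this already pins down the candidate limit $\zeta$ and gives convergence on the first level.

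The heart of the matter is upgrading convergence from level $1$ to all levels. Here I would use the Montel-type theorem for nc functions discussed in Section~\ref{sec:ncfunc}: the family $\{f^{\circ n}\}$ maps $\Omega$ into the uniformly bounded set $\Omega$, hence is a bounded (therefore precompact) subset of $\cT(\Omega)$, so every subsequence has a further subsequence converging uniformly on levelwise compacta to some nc function $g \colon \Omega \to \overline{\Omega}$. It suffices to show every such partial limit $g$ equals the constant function $\zeta$. Since $g$ is a limit of nc maps into $\Omega$, its values lie in $\overline{\Omega}$; and on level $1$ we already know $g_1 \equiv \zeta \in \partial\Omega(1)$. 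Thus $g(\Omega(1)) \cap \partial\Omega \neq \emptyset$, so by the simple nc boundary hypothesis $g$ is constant, necessarily equal to $\zeta$ (an ampliation $\zeta^{\oplus m}$, which we identify with $\zeta$ since $g$ respects direct sums and the value on level $1$ is $\zeta$). As the partial limit is independent of the subsequence, the full sequence $f^{\circ n}$ converges to the constant function $\zeta$, uniformly on levelwise compacta.

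One subtlety I would address carefully: a priori a partial limit $g$ of nc self-maps of $\Omega$ could take values on $\partial\Omega$ on \emph{every} level, not map into $\Omega$; this is exactly why the ``simple nc boundary'' condition is phrased for maps into $\overline{\Omega}$ rather than $\Omega$, and it is what lets us conclude $g$ is constant directly from the fact that its level-$1$ restriction is the constant boundary point $\zeta$. A second point is to justify that the hypotheses of Abate--Raissy are genuinely met by $\Omega(1)$: uniform boundedness of $\Omega$ gives boundedness of $\Omega(1)$, matrix convexity gives convexity of $\Omega(1)$, and strict convexity is assumed outright; one should also check $f_1$ has no fixed point, which is immediate since a fixed point of $f_1$ would be a fixed point of $f$ on level $1$, contradicting the hypothesis.

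I expect the main obstacle to be purely bookkeeping rather than conceptual: namely, making sure the convergence obtained from the nc Montel theorem (uniform on levelwise compacta) is strong enough for the intended applications and is correctly matched to the classical convergence on level $1$, and verifying that the simple nc boundary property applies to the partial limit $g$ (which requires knowing $g$ is a bona fide nc holomorphic function into $\overline{\Omega}$ --- this follows because uniform-on-compacta limits of nc functions are nc functions, using that the direct-sum and similarity conditions are closed, as noted in Section~\ref{sec:ncfunc}). No single step should require a hard estimate; the classical input does the analytic work and the nc axioms do the propagation.
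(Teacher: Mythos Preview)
Your proposal is correct and follows essentially the same approach as the paper's proof: apply Abate--Raissy on level $1$ to identify $\zeta$, use the nc Montel theorem to extract convergent subsequences, and then invoke the simple nc boundary hypothesis to force every partial limit to be the constant $\zeta$. Your write-up is in fact more careful than the paper's, explicitly checking the hypotheses of Abate--Raissy, that $f_1$ has no fixed point, and that partial limits are genuine nc maps into $\overline{\Omega}$.
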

\begin{proof}
By \cite[Theorem 2]{AbateRaissy}, the sequence of iterates $(f^{\circ\, n})_1 = f_1^{\circ\, n}$ converges to the constant function $\zeta$. Now applying Montel's theorem to $f^{\circ\, n}$, we know that the sequence has convergent subsequences. Let $f^{\circ\, n_k}$ be a convergent subsequence and $g$ its limit. Since $g_1 = \zeta$, by the fact that $\Omega$ has a simple nc boundary, $g$ is constant and equals $\zeta$. Hence, every convergent subsequence converges to the same function, and thus, $f^{\circ\, n}$ converges to $\zeta$ uniformly on every compact.
\end{proof}

\begin{thm} \label{thm:nc_denjoy_wolff_ball}
Let $f \colon \fB_d \to \fB_d$ be an nc function without fixed points, then the sequence of iterates $f^{\circ\, n}$ converges uniformly on subball $r\overline{\fB_d}$ to the constant function $\zeta$, where $\zeta $ is the Denjoy-Wolff point of $f_1 = f|_{\fB_d(1)}$.
\end{thm}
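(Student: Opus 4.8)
The plan is to obtain this sharper statement from Theorem~\ref{thm:nc_denjoy_wolff} by upgrading the mode of convergence with Popescu's strong version of Montel's theorem for the row ball. First I would check that the hypotheses of Theorem~\ref{thm:nc_denjoy_wolff} are met for $\Omega = \fB_d$: it is a uniformly bounded matrix convex domain, its first level $\fB_d(1) = \BB_d$ is strictly convex, and it has a simple nc boundary by the maximum modulus principle \cite[Lemma~6.1]{3S1}. Since $f$ has no fixed points in $\fB_d$ (hence $f_1$ has none in $\BB_d$), Theorem~\ref{thm:nc_denjoy_wolff} gives that $f^{\circ\,n}$ converges, uniformly on levelwise compacta, to the constant function $\zeta$, where $\zeta \in \partial\BB_d$ is exactly the Denjoy--Wolff point of $f_1$ furnished by \cite[Theorem~2]{AbateRaissy}. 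In particular $f^{\circ\,n}(\bX) \to \zeta$ pointwise for every $\bX \in \fB_d$.

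Next I would exploit that every $f^{\circ\,n}$ is an nc self-map of $\fB_d$, hence uniformly bounded in norm by $1$. By Popescu's version of Montel's theorem \cite[Theorem~5.2]{Popescu-funcI}, for each fixed $r \in (0,1)$ every subsequence of $(f^{\circ\,n})$ has a further subsequence converging \emph{uniformly on $r\overline{\fB_d}$} to an nc analytic function; a diagonal argument across a sequence $r_m \uparrow 1$ then extracts, from any subsequence, a further subsequence $f^{\circ\,n_k}$ converging uniformly on $r\overline{\fB_d}$ for every $r \in (0,1)$, say to some nc function $g$. Uniform convergence on all the balls $r\overline{\fB_d}$ forces pointwise convergence on $\bigcup_{0<r<1} r\overline{\fB_d} = \fB_d$, so comparing with the previous paragraph we get $g \equiv \zeta$.

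Finally, the topology of uniform convergence on the family $\{r\overline{\fB_d}\}_{0<r<1}$ is metrizable, being generated by the countable family of seminorms $h \mapsto \sup_{n}\sup_{\bX \in r_m\overline{\fB_d}(n)}\|h(\bX)\|$ with $r_m \uparrow 1$, and we have just shown that every subsequence of $(f^{\circ\,n})$ has a further subsequence converging in this metric to the constant $\zeta$. Hence the whole sequence converges to $\zeta$ in this metric, i.e. uniformly on each $r\overline{\fB_d}$, which is the assertion. The only point requiring care is invoking \cite[Theorem~5.2]{Popescu-funcI} in the present setting: one must identify nc analytic self-maps of $\fB_d$ with Popescu's bounded free holomorphic functions on the row ball so that his compactness statement applies, and observe that $r\overline{\fB_d} \subset \fB_d$ for $r < 1$ so that the iterates are genuinely defined and bounded there. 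Beyond correctly citing Popescu's theorem, the rest is the standard subsequence characterization of convergence in a metric space, so I do not expect a substantial obstacle.
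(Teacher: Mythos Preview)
Your proposal is correct and follows essentially the same approach as the paper: replace the generic nc Montel theorem in the proof of Theorem~\ref{thm:nc_denjoy_wolff} by Popescu's stronger version \cite[Theorem~5.2]{Popescu-funcI}, so that subsequential limits are uniform on subballs, and then identify every such limit with the constant $\zeta$. The paper's proof is simply the terse ``argue as above'' after invoking Popescu's Montel theorem; your detour through Theorem~\ref{thm:nc_denjoy_wolff} to pin down the pointwise limit is a harmless variant of identifying $g_1 = \zeta$ and invoking the simple nc boundary directly.
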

\begin{proof}
By Popescu's Montel theorem \cite[Theorem 5.2]{Popescu-funcI}, the sequence $f^{\circ\, n}$ has subsequences that converge uniformly on subballs. Hence, we can argue as above, but the convergence is on subballs.
\end{proof}

\subsection{Maximal quantization of the unit ball}

Let $K \subset \C^d$ be a bounded open convex set and assume for simplicity that $0 \in K$. By the Hahn-Banach separation theorem, for every $z \in \C^d \setminus K$, there exists $w \in \C^d$ and $s \in \R$, such that for every $u \in K$, $\re \langle u, w \rangle < s \leq \re \langle z, w \rangle$. Since necessarily $s > 0$, we may replace $w$ by $\frac{1}{s} w$ and set $s = 1$. Hence, $K$ can be identified as an intersection of open half-planes. Let us write $K^{\circ} = \left\{ w \in \C^d \mid \forall u \in K,\, \re \langle u, w \rangle \leq 1 \right\}$. As in \cite{DDSS17} (in the compact case), we define the maximal matrix convex set that contains $K$ by $\Max(K) = \sqcup_{n \in \N} \Max(K)(n)$, where
\[
\Max(K)(n) = \left\{ \bZ \in \M_d(n) \mid \forall w \in K^{\circ},\, \re\left( \sum_{j=1}^d \overline{w_j} Z_j \right) < 1 \right\}.
\]
From now will focus on $\Max(\BB_d)$. Let $\overline{\Max(\BB_d)}$ be the levelwise closure of $\Max(\BB_d)$. Note that by continuity of the functionals, $\overline{\Max(\BB_d)} \subset \Max(\overline{\BB_d})$. Conversely, if $Z \in \Max(\overline{\BB_d})$, then for every $0 < r < 1$, $r Z \in \Max(\BB_d)$ and, thus, these sets coincide. We aim to show that the Denjoy-Wolff theorem holds for $\Max(\BB_d)$. One of the key ingredients is the maximal modulus principle. We need some preparation to prove a version of the maximum modulus principle for $\Max(\BB_d)$.

\begin{lem} \label{lem:real_part_const}
Let $\Omega \subset \M_d$ be an open nc set and $f \colon \Omega \to \M_1$ be a locally bounded nc function. If $\re f(\bW) = I$ for all $\bW \in \Omega$, then $f$ is constant.
\end{lem}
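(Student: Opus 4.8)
The plan is to reduce the statement to the classical fact that a holomorphic scalar function with constant real part is constant. The obstruction is that an nc function $f\colon\Omega\to\M_1$ is not a single holomorphic function but a graded family $f_n\colon\Omega(n)\to M_n$, and the hypothesis $\re f(\bW)=I$ says that the \emph{self-adjoint} matrix $f_n(\bW)+f_n(\bW)^*$ equals $2I$ for all $\bW\in\Omega(n)$; this is a much weaker-looking condition than ``$\re$ of a scalar is constant'' and needs the nc structure to be leveraged.

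First I would handle level one. Here $f_1\colon\Omega(1)\to\C$ is an ordinary holomorphic function (on the open set $\Omega(1)\subset\C^d$) with $\re f_1\equiv 1$; by the open mapping theorem, or by applying the one-variable result along complex lines, $f_1$ is constant, say $f_1\equiv 1+ic$ for some real $c$. Next I would show the higher levels are forced. Fix $n$ and $\bW\in\Omega(n)$. The key device is the nc difference-differential formula: for $\bW\in\Omega(n)$, $\bW'\in\Omega(1)$ (e.g. a scalar point in $\Omega$, which exists since $\Omega$ is open and nonempty — or more carefully, reduce first to the connected component and note we may translate; if $\Omega(1)$ could be empty one argues directly on $\Omega(n)$ via restriction to a $\GL_n$-orbit), we have
\[
f\!\left(\begin{pmatrix}\bW & \bZ\\ 0 & \bW'\end{pmatrix}\right)=\begin{pmatrix}f(\bW) & \Delta f(\bW,\bW')(\bZ)\\ 0 & f(\bW')\end{pmatrix}
\]
whenever the upper-triangular argument lies in $\Omega(n+1)$. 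Taking real parts of the $(n+1)\times(n+1)$ matrix and using $\re f\equiv I$ on $\Omega(n+1)$, the off-diagonal blocks of $\re\big(\text{RHS}\big)$ must vanish, which gives $\Delta f(\bW,\bW')(\bZ)=0$ for all admissible $\bZ$. A cleaner route that avoids worrying about which $\bZ$ are admissible: since $\re f\equiv I$ on all of $\Omega$, the Fr\'echet derivative $\Delta f(\bW,\bW)$ (which exists by the Kaljuzhnyi–Verbovetskyi–Vinnikov differentiability theorem, $f$ being locally bounded) satisfies $\re\big(\Delta f(\bW,\bW)(\bH)\big)=\Delta f(\bW,\bW)(\bH)+\Delta f(\bW,\bW)(\bH)^*=0$ for every $\bH$ in the directions tangent to $\Omega(n)$ — i.e. $\Delta f(\bW,\bW)(\bH)$ is always skew-adjoint. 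But $\Delta f(\bW,\bW)$ is complex-linear, so it also satisfies $\Delta f(\bW,\bW)(i\bH)=i\,\Delta f(\bW,\bW)(\bH)$; if $A:=\Delta f(\bW,\bW)(\bH)$ is skew-adjoint then so is $iA$, forcing $A=0$. Hence $\Delta f(\bW,\bW)=0$ for every $\bW$, i.e. the derivative of $f$ vanishes identically on the (connected, being levelwise open and... — if $\Omega(n)$ is not connected, argue per component) domain, so $f$ is locally constant on each level, hence constant on each $\Omega(n)$; and the value must then be $1+ic$ on every level by the direct-sum property, which matches level one.

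The step I expect to be the main obstacle is the bookkeeping about connectedness and about which directions are ``tangent'': $\Omega(n)$ is open in $M_n(\C^d)$, so $\Delta f(\bW,\bW)$ is defined on all of $M_n(\C^d)$ and the argument above applies verbatim once one knows $f$ is Fr\'echet differentiable there, which is exactly the KVV theorem quoted in Section~\ref{sec:ncfunc}. Connectedness of $\Omega(n)$ is not actually needed if we only want local constancy and then invoke that the value is determined on each component independently but pinned down by direct sums with a fixed scalar point (assuming $\Omega(1)\neq\emptyset$; if the paper's standing hypotheses guarantee $0\in\Omega$, as in the surrounding results, this is automatic). So the real content is: constant real part plus complex-linearity of the derivative forces the derivative to be zero, which is the same trick as in the classical one-variable proof, now applied levelwise to the nc Fr\'echet derivative.
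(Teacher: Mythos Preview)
Your argument is correct. Both your ``cleaner route'' and the paper's proof establish $\Delta f(\bW,\bW)=0$ and hence local constancy, but by different means. The paper works at level $2n$: for small $t>0$ the upper-triangular point $\left(\begin{smallmatrix}\bW & t\bZ\\ 0 & \bW\end{smallmatrix}\right)$ lies in $\Omega(2n)$, and applying the hypothesis $\re f=I$ there and reading off the off-diagonal block of the real part of $\left(\begin{smallmatrix}f(\bW)&\Delta f(\bW,\bW)(t\bZ)\\0&f(\bW)\end{smallmatrix}\right)$ yields $\Delta f(\bW,\bW)(\bZ)=0$ in one line --- no skew-adjointness step is needed. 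Your route stays entirely at level $n$: you differentiate the real identity $\re f_n\equiv I$ to see that $\Delta f(\bW,\bW)(\bH)$ is always skew-adjoint, and then use $\C$-linearity to force it to vanish. This is the classical scalar argument transplanted to matrix targets and makes no use of the nc upper-triangular machinery; it shows the lemma is really a level-wise fact about holomorphic matrix-valued maps. Your first approach (upper-triangular with a scalar $\bW'\in\Omega(1)$) was close to the paper's --- had you taken $\bW'=\bW$ instead, you would have recovered it exactly and avoided the ``admissible $\bZ$'' worry. Finally, your treatment of global constancy via direct sums with a fixed scalar point is more careful than the paper, which tacitly passes from vanishing derivative to ``constant'' without discussing connectedness.
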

\begin{proof}
Since $f$ is levelwise holomorphic and $\Delta f(\bW, \bW)(\bZ)$ is the directional derivative at $\bW$. It suffices to prove that for every $\bW \in \Omega$ and $\bZ \in \M_d$, $\Delta f(\bW, \bW)(\bZ) = 0$. Note that since $\Omega$ is open there exists $t > 0$, such that $\left( \begin{smallmatrix} \bW & t\bZ \\ 0 & \bW \end{smallmatrix}\right) \in \Omega$. Hence,
\[
\begin{pmatrix} I & 0 \\ 0 & I \end{pmatrix} = \re f\left( \begin{pmatrix} \bW & \bZ \\ 0 & \bW \end{pmatrix} \right) = \begin{pmatrix} I & \frac{1}{2} \Delta f(\bW, \bW)(\bZ) \\ \frac{1}{2} \Delta f(\bW, \bW)(\bZ)^* & I \end{pmatrix}.
\]
\end{proof}

\begin{lem} \label{lem:max_boundary}
Let $\bX \in \Max(\overline{\BB_d})$ and assume that $X_1 = I$. Then $X_2 = \cdots = X_d = 0$.
\end{lem}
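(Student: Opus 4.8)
The plan is to feed the defining inequalities of $\Max(\overline{\BB_d})$ a one-parameter family of norm-one linear functionals supported on the coordinates $1$ and $k$, use the normalization $X_1=I$, and then let the parameter degenerate.

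First I would record what membership in $\Max(\overline{\BB_d})$ actually says. The polar body $\BB_d^{\circ}$ occurring in the definition of $\Max$ is exactly the closed unit ball: for $w\in\C^d$ one has $\sup_{u\in\BB_d}\re\langle u,w\rangle=\|w\|$, so the condition ``$\re\langle u,w\rangle\le 1$ for all $u\in\BB_d$'' is equivalent to $\|w\|\le 1$; thus $\BB_d^{\circ}=\overline{\BB_d}$. Together with the paragraph preceding the lemma (where it is noted that $\Max(\overline{\BB_d})$ is cut out by the non-strict inequalities), this gives: $\bX\in\Max(\overline{\BB_d})(n)$ means
\[
\re\Big(\sum_{j=1}^d\overline{w_j}X_j\Big)\le I_n\qquad\text{for every }w\in\overline{\BB_d}.
\]

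Now fix $k\in\{2,\dots,d\}$, a phase $\alpha\in\R$, and an angle $\theta\in(0,\pi/2)$, and take $w\in\C^d$ with $w_1=\cos\theta$, $w_k=e^{-i\alpha}\sin\theta$, and all other coordinates zero; then $\|w\|=1$, so $w\in\overline{\BB_d}$. Substituting $w$ into the displayed inequality and using $X_1=I_n$ yields $\cos\theta\,I_n+\sin\theta\,\re(e^{i\alpha}X_k)\le I_n$, that is,
\[
\re(e^{i\alpha}X_k)\le\frac{1-\cos\theta}{\sin\theta}\,I_n=\tan(\theta/2)\,I_n .
\]
Letting $\theta\to 0^+$ gives $\re(e^{i\alpha}X_k)\le 0$ for every $\alpha\in\R$; replacing $\alpha$ by $\alpha+\pi$ gives the reverse inequality, so $\re(e^{i\alpha}X_k)=0$ for all $\alpha$. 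Taking $\alpha=0$ shows the self-adjoint part $\tfrac12(X_k+X_k^*)$ vanishes, and $\alpha=\pi/2$ shows the skew-adjoint part vanishes, hence $X_k=0$. As $k\in\{2,\dots,d\}$ was arbitrary, $X_2=\cdots=X_d=0$.

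There is no real obstacle here; the two points deserving a line of care are the identification $\BB_d^{\circ}=\overline{\BB_d}$ (equivalently, that $\Max(\overline{\BB_d})$ carries non-strict inequalities, already recorded above) and the bookkeeping of complex conjugates, so that the rotation $X_k\mapsto e^{i\alpha}X_k$ genuinely sweeps out all of $\C\cdot X_k$. Optionally one can phrase the last step more symmetrically: $\re(e^{i\alpha}X_k)\le 0$ for all $\alpha$ forces $\re\big(e^{i\alpha}\langle X_k\xi,\xi\rangle\big)\le 0$ for all unit vectors $\xi$ and all $\alpha$, so the numerical range of $X_k$ is $\{0\}$ and therefore $X_k=0$.
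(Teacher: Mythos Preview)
Your proof is correct, but it follows a different route than the paper. The paper exploits the matrix convexity of $\Max(\overline{\BB_d})$ in one line: compressing $\bX$ by any unit vector $u\in\C^n$ lands in the first level, so $(1,\langle X_2 u,u\rangle,\dots,\langle X_d u,u\rangle)\in\overline{\BB_d}$; since the first coordinate is $1$, the remaining coordinates vanish, i.e.\ the numerical range of each $X_k$ is $\{0\}$, hence $X_k=0$. Your argument instead works directly with the defining half-space inequalities, feeding in the one-parameter family $w=(\cos\theta,0,\dots,e^{-i\alpha}\sin\theta,\dots,0)$ and letting $\theta\to 0^+$ to force $\re(e^{i\alpha}X_k)\le 0$ for all $\alpha$. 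Both approaches terminate at the same numerical-range conclusion (you even note this in your optional reformulation). The paper's argument is shorter and makes the role of matrix convexity explicit; yours is more elementary in that it never invokes compression and would work for any tuple satisfying the half-space inequalities, without reference to the matrix convex structure.
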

\begin{proof}
Since $\Max(\overline{\BB_d})$ is matrix convex, for every unit vector $u \in \C^n$, we have that $u^* \bX u = (1 , \langle X_2 u, u \rangle, \cdots, \langle X_d u, u \rangle) \in \overline{\BB_d}$. Which implies that the numerical ranges of $X_2,\, \ldots,\, X_d$ are $\{0\}$ and thus these matrices are zero.

\end{proof}

The following theorem is a version of the maximum modulus from \cite{3S1} for the set $\Max(\BB_d)$.
\begin{thm} \label{thm:max_modulus_max}
Let $\Omega \subset \M_k$ be an open unitarily-invariant nc set and let $f \colon \Omega \to \Max(\overline{\BB_d})$ be a locally bounded holomorphic function. If for some $\bW_0 \in \Omega$, $f(\bW_0) \in \partial \Max(\overline{\BB_d})$, then $f$ is constant.
\end{thm}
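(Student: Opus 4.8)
The plan is to compose $f$ with a supporting linear functional at the boundary point $f(\bW_0)$, which reduces the dimension $d$ to $1$, and then apply a Cayley transform so as to land inside the noncommutative disc $\fB_1$, where the maximum modulus principle is already available.

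\textbf{Reduction to a scalar nc function.} Since $\overline{\Max(\BB_d)}=\Max(\overline{\BB_d})$, the hypothesis $f(\bW_0)\in\partial\Max(\overline{\BB_d})$ means that $f(\bW_0)\in\Max(\overline{\BB_d})$ but $f(\bW_0)\notin\Max(\BB_d)$; unwinding the definition of $\Max$, this produces $w\in\C^d$ with $|w|=1$ such that the selfadjoint matrix $\re\!\bigl(\sum_{j}\overline{w_j}f(\bW_0)_j\bigr)$ is $\le I$ but not $<I$, i.e. has $1$ in its spectrum, and we fix a unit eigenvector $\xi$ for the eigenvalue $1$. Set $g\colon\Omega\to\M_1$, $g(\bW)=\sum_{j=1}^{d}\overline{w_j}\,f(\bW)_j$. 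One checks directly that $g$ is a locally bounded nc function, and since $f(\bW)\in\Max(\overline{\BB_d})$ and $|w|=1$ we have $\re g(\bW)\le I$ for every $\bW\in\Omega$, with $\re g(\bW_0)\xi=\xi$.

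\textbf{Cayley transform into $\overline{\fB_1}$.} Because $\re\!\bigl(2I-g(\bW)\bigr)=2I-\re g(\bW)\ge I>0$, the matrix $2I-g(\bW)$ is invertible for every $\bW$, so $\tilde g:=g\,(2I-g)^{-1}$ is a well-defined nc holomorphic function on $\Omega$. The identity $\|g(\bW)u\|^{2}-\|(2I-g(\bW))u\|^{2}=-4\,\langle(I-\re g(\bW))u,u\rangle$ together with $\re g(\bW)\le I$ shows $\|\tilde g(\bW)\|\le 1$ for all $\bW$; evaluating the same identity at $\bW_0$ with $u=\xi$, where $I-\re g(\bW_0)$ annihilates $\xi$, turns this into an equality, so $\|\tilde g(\bW_0)\|=1$. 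Thus $\tilde g$ is an nc holomorphic map $\Omega\to\overline{\fB_1}$ with $\tilde g(\bW_0)\in\partial\fB_1$. Now $\fB_1$ has a simple nc boundary \cite[Lemma 6.1]{3S1}, so $\tilde g$ is constant; since $I+\tilde g=2(2I-g)^{-1}$ is invertible, $g=2\tilde g(I+\tilde g)^{-1}$ is constant as well, say $g\equiv\gamma I$, and as $\re g\le I$ on all of $\Omega$ while $\re g(\bW_0)$ has eigenvalue $1$, necessarily $\re\gamma=1$.

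\textbf{From one functional of $f$ to $f$ itself.} For any $\bW\in\Omega(n)$ and any unit vector $\eta\in\C^{n}$, matrix convexity of $\Max(\overline{\BB_d})$ gives $v:=\eta^{*}f(\bW)\eta\in\overline{\BB_d}$, and $\langle v,w\rangle=\langle g(\bW)\eta,\eta\rangle=\gamma$. From $1=|\re\gamma|\le|\gamma|=|\langle v,w\rangle|\le|v|\,|w|\le1$ we obtain $\gamma=1$, $|v|=1$, and equality in Cauchy--Schwarz, whence $v=w$. So $\eta^{*}f(\bW)_j\,\eta=w_j$ for every unit $\eta$, i.e. each $f(\bW)_j$ has numerical range $\{w_j\}$ and therefore $f(\bW)_j=w_jI_n$; thus $f\equiv w$. (Equivalently, after a unitary rotation of the coordinates of $\C^{d}$ carrying $w$ to $e_1$, this last step is precisely Lemma \ref{lem:max_boundary}.)

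\textbf{Main obstacle.} The two genuinely substantive points are: (i) verifying that the Cayley transform carries ``$f(\bW_0)$ lies on $\partial\Max(\overline{\BB_d})$ in the direction $w$'' precisely onto ``$\tilde g(\bW_0)$ lies on $\partial\fB_1$'', that is, the equality case $\|\tilde g(\bW_0)\|=1$; and (ii) the passage from ``one scalar functional of $f$ is constant'' to ``$f$ is constant'', which really uses that the target is the \emph{maximal} quantization of $\BB_d$ (for a general matrix convex target this step breaks down). The remaining verifications --- that $g$ and $\tilde g$ are locally bounded nc functions and that $2I-g$ is everywhere invertible --- are routine.
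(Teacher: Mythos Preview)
Your proof is correct but diverges from the paper's in the middle step. Both arguments begin identically, choosing a supporting direction $w$ (the paper writes $\lambda$) and forming the scalar nc function $g=\sum_j\overline{w_j}f_j$ with $\re g\le I$ and $\re g(\bW_0)$ having eigenvalue $1$; both also finish identically, using matrix convexity of $\Max(\overline{\BB_d})$ and a numerical-range argument (exactly Lemma~\ref{lem:max_boundary}) to pass from ``$g$ is a scalar constant with real part $1$'' to ``$f$ is constant''. The difference is in how one shows $g$ is constant. The paper argues from scratch: it forms the pluriharmonic functions $\varphi_k(\bW)=\re\langle g(\bW)u^{(k)},u^{(k)}\rangle$ on the amplified levels $\Omega(nk)$, applies the classical maximum principle to each, then uses unitary invariance of $\Omega$ to upgrade ``$\varphi_k\equiv1$'' to ``$\re g\equiv I$'', and finally invokes the preparatory Lemma~\ref{lem:real_part_const}. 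You instead Cayley-transform $g$ into $\tilde g=g(2I-g)^{-1}\colon\Omega\to\overline{\fB_1}$, observe that $\tilde g(\bW_0)$ hits $\partial\fB_1$, and quote the known simple-nc-boundary property of $\fB_1$ from \cite{3S1}. Your route is more economical---it recycles an existing nc maximum-modulus black box and bypasses both Lemma~\ref{lem:real_part_const} and the explicit amplification/unitary-invariance argument---while the paper's route is more self-contained, resting only on the classical maximum principle rather than importing the nc result for the row ball.
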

\begin{proof}
Assume $\bW_0 \in \Omega(n)$. Since $f(\bW_0) \in \partial \Max(\overline{\BB_d})$, there exists $\lambda \in \C^d$ with $\|\lambda\| = 1$, such that $\re \left(\sum_{j=1}^d \overline{\lambda_j} f_j(\bW_0)\right)$ has $1$ in its spectrum. Set $g(\bW) = \sum_{j=1}^d \overline{\lambda_j} f_j(\bW)$ and let $u$ be a unit vector, such that $\re g(\bW_0) u = u$. Let $k \in \N$ and define $\varphi_k \colon \Omega(nk) \to \R$ by $\varphi_k(\bW) = \re \langle g(\bW) u^{(k)}, u^{(k)} \rangle$, where $u^{(k)} = \frac{1}{\sqrt{k}} \left( \begin{smallmatrix} u \\ \vdots \\ u \end{smallmatrix} \right) \in \C^{nk}$. Since $\re g(\bW) \leq I$ for all $\bW \in \Omega$, we have that $\varphi_k(\bW) \leq 1$. Moreover, by assumption $\varphi_k(I_k \otimes \bW_0) = 1$. Since $\varphi_k$ is pluriharmonic, we can apply the classical maximum modulus to deduce that $\varphi_k$ is constant. Hence, for every $\bW \in \Omega(nk)$, we have that $\re \langle g(\bW) u, u \rangle = 1$. By unitary invariance of $\Omega$, we have that for every unitary $nk \times nk$ matrix $V$, we have that 
\[
\re \langle g(\bW) V u^{(k)}, g(\bW) Vu^{(k)} \rangle = \re \langle g(V^* \bW V) u^{(k)}, u^{(k)} \rangle  = 1.
\]
In particular, the numerical range of $\re g(\bW)$ is the singleton $1$, i.e, $\re g(\bW) = I$. Now we can apply Lemma \ref{lem:real_part_const} to deduce that the restriction of $g$ to levels that are multiples of $n$ is constant. However, for every $m$ not divisible by $n$, we have that $\Omega(m)$ can be embedded into $\Omega(nm)$ via direct sums, and thus $g$ is constant. 

Now note that if $U \in M_d$ is a unitary, then for $\bW \in \Max(\overline{\BB_d})$, we have $U(\bW) \in \Max(\overline{\BB_d})$. Hence, applying a unitary, we may assume that $\lambda$ is the first vector of the standard basis. Hence, we have that $f_1 = 1$. By Lemma \ref{lem:max_boundary}, we have that $f_2 = \cdots = f_d = 0$.
\end{proof}

\begin{cor} \label{cor:nc_denjoy_wolff_max}
Let $f \colon \Max(\BB_d) \to \Max(\BB_d)$ be an nc map without fixed points. Then, the sequence of iterates of $f$ converges to the constant function $\zeta$, where $\zeta$ is the Denjoy-Wolff point of $f_1$.
\end{cor}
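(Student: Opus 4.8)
The plan is to derive the corollary from Theorem \ref{thm:nc_denjoy_wolff}, so the main work is to check that $\Max(\BB_d)$ meets its hypotheses. First I would observe that $\Max(\BB_d)$ is a matrix convex nc domain by construction: it is the intersection of the matrix convex sets $\{\bZ : \re(\sum_{j=1}^d \overline{w_j}Z_j) < 1\}$ over $w \in \BB_d^{\circ}$, each of which is level-wise open and unitarily invariant, and an intersection of nc sets is an nc set. Its first level is $\Max(\BB_d)(1) = \BB_d$ (since $\BB_d^{\circ} = \overline{\BB_d}$), which is strictly convex. It remains to see uniform boundedness: for $\bZ \in \Max(\BB_d)(n)$ and a unit vector $u \in \C^n$, matrix convexity gives $u^*\bZ u = (\langle Z_1 u,u\rangle,\ldots,\langle Z_d u,u\rangle) \in \BB_d$, so each $Z_j$ has numerical radius at most $1$, hence $\|Z_j\| \le 2$ and $\|\bZ\| \le 2\sqrt{d}$ independently of $n$.

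Next, the simple nc boundary hypothesis is precisely the content of Theorem \ref{thm:max_modulus_max}: together with the already-recorded identification $\overline{\Max(\BB_d)} = \Max(\overline{\BB_d})$, it says that any locally bounded nc holomorphic map $f\colon \Omega' \subset \M_k \to \overline{\Max(\BB_d)}$ whose range meets $\partial\Max(\BB_d)$ must be constant. With every hypothesis of Theorem \ref{thm:nc_denjoy_wolff} verified, I would conclude that the iterates $f^{\circ\,n}$ converge to a constant function $\zeta$ with $\zeta \in \partial\Max(\BB_d)(1) = \partial\BB_d$; and, as recorded inside the proof of Theorem \ref{thm:nc_denjoy_wolff}, the classical Denjoy-Wolff theorem (equivalently \cite[Theorem 2]{AbateRaissy} applied on the first level) identifies this $\zeta$ as the Denjoy-Wolff point of $f_1$.

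Equivalently — and this is the alternative route I would spell out if one prefers not to quote Theorem \ref{thm:nc_denjoy_wolff} verbatim — one repeats the two-step scheme directly: the classical Denjoy-Wolff theorem on $\BB_d$ yields $f_1^{\circ\,n} \to \zeta$ for the Denjoy-Wolff point $\zeta$ of $f_1$; the nc Montel theorem from Section \ref{sec:ncfunc} extracts from $(f^{\circ\,n})$ a subsequence converging, uniformly on level-wise compacta, to an nc map $g$ into $\overline{\Max(\BB_d)} = \Max(\overline{\BB_d})$ with $g_1 \equiv \zeta$; since $\zeta \in \partial\BB_d$, Theorem \ref{thm:max_modulus_max} (through Lemma \ref{lem:max_boundary}) forces $g$ to be the constant equal to the ampliations of $\zeta$; as every subsequential limit is this same constant, the full sequence $f^{\circ\,n}$ converges to $\zeta$.

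I do not expect a genuine obstacle here; the only points needing care are the uniform boundedness estimate, which is immediate from matrix convexity as above, and the remark that a uniform-on-compacta limit of self-maps of the open set $\Max(\BB_d)$ takes values in $\Max(\overline{\BB_d})$ rather than escaping — this holds because each coordinate linear functional is continuous and $\Max(\overline{\BB_d})$ is level-wise closed.
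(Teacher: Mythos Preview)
Your proposal is correct and follows exactly the paper's route: invoke Theorem~\ref{thm:max_modulus_max} to establish that $\Max(\BB_d)$ has a simple nc boundary, then apply Theorem~\ref{thm:nc_denjoy_wolff}. You are simply more explicit than the paper in verifying the remaining hypotheses (matrix convexity, uniform boundedness via the numerical-radius argument, and strict convexity of $\BB_d$), which the paper leaves to the reader.
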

\begin{proof}
The above theorem shows that $\Max(\BB_d)$ has a simple nc boundary. Thus, the result is a consequence of Theorem \ref{thm:nc_denjoy_wolff}
\end{proof}

\begin{question}
Does every quantization of $\BB_d$ have a simple nc boundary?
\end{question}

\section{Noncommutative Vigu\'{e} and Bedford theorems} \label{sec:nc_vigue}

In \cite{Vigue-tams}, Vigu\'{e} has proved that for a bounded convex domain $U \subset \C^d$, the set of fixed points of an analytic self-map is a smooth analytic subvariety. Moreover, a smooth analytic subvariety $V \subset U$ is the set of fixed points of some analytic self-map if and only if $V$ is a holomorphic retract of $U$, i.e., there exists $\psi \colon U \to V$ analytic, such that $\psi|_V = \mathrm{id}_V$. Our goal here is to show that Vigu\'{e}'s methods extend almost verbatim to the setting of bounded matrix convex sets.

By \cite[Proposition 2.1.8]{Abate-book} \cite[Corollary 2.1.11]{Abate-book}, every bounded convex domain $W \subset \C^n$ is both taut and tautly embedded in $\C^n$, Hence, given a sequence of maps $f_n \colon W \to W$ that converges uniformly on compacta to $f$, we have that $f(W) \subset W$ or $f(W) \subset \partial W$.

Note that if $\Omega \subset \M_d$ is matrix convex, then for decomposable point in $\Omega$, the restriction to the invariant subspaces and the compression to coinvariant subspaces are in $\Omega$. We say that an nc subset $\Xi \subset \Omega$ is (relatively) bifull if it is closed under similarities, restrictions to invariant subspaces, and compressions to coinvariant subspaces (see \cite{BMV-int}). To be more precise,``closed under similarities'' means that for every $n\in \N$, $X \in \Xi(n)$, and every $ S \in \mathrm{GL}_n(\C)$, such that $S^{-1} X S \in \Omega$, we have that $S^{-1} X S \in \Xi$.

\begin{thm}[Noncommutative Vigu\'{e}'s theorem] \label{thm:nc_vigue}
Let $\Omega \subset \M_d$ be a bounded matrix convex set. Let $\fV \subset \Omega$ be an nc relatively bifull subset, such that for every $n \in \N$, $\fV(n)$ is a smooth analytic subvariety of $\Omega(n)$. Then, $\fV$ is the fixed point set of an nc map $f \colon \Omega \to \Omega$, if and only if $\fV$ is the image on an nc holomorphic retraction, i.e., there exists $\psi \colon \Omega \to \Omega$, such that $\psi \circ \psi = \psi$ and $\psi(\Omega) = \fV$.
\end{thm}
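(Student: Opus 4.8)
The plan is to mirror Vigué's two implications from \cite{Vigue-tams}, running the commutative arguments level by level and then checking that the nc structure is respected at each step. For the easy direction, suppose $\psi \colon \Omega \to \Omega$ is an nc holomorphic retraction with $\psi(\Omega) = \fV$. Then $\psi$ is an nc self-map and its fixed point set is exactly $\fV$: if $\psi(\bX) = \bX$ then $\bX \in \psi(\Omega) = \fV$, and conversely for $\bX \in \fV$ we have $\bX = \psi(\bY)$ for some $\bY$, whence $\psi(\bX) = \psi(\psi(\bY)) = \psi(\bY) = \bX$. So $\fV = \fix(\psi)$.

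For the hard direction, suppose $\fV = \fix(f)$ for an nc self-map $f \colon \Omega \to \Omega$. I would first analyze the sequence of iterates $f^{\circ\, n}$. By Montel's theorem for nc functions (recorded in Section \ref{sec:ncfunc}), every subsequence of $(f^{\circ\, n})$ has a convergent sub-subsequence, uniformly on levelwise compacta; by the tautness remark above, applied on each level $\Omega(m)$ to the restriction $f_m$, any limit $g$ satisfies $g(\Omega(m)) \subset \Omega(m)$ or $g(\Omega(m)) \subset \partial\Omega(m)$ — and since $\fV(m) = \fix(f_m)$ is nonempty (a smooth subvariety), the classical theory of Vigué (or Bedford) forces the interior alternative, so $g$ is again an nc self-map of $\Omega$. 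Vigué's key observation is that along a suitable subsequence $f^{\circ\, n_k}$ one gets a limit retraction: more precisely, one chooses the limit $\rho$ of $f^{\circ\, n_k}$ and then, passing to a further subsequence of the differences $n_{k+1} - n_k$, one arranges that $f^{\circ\,(n_{k+1}-n_k)} \to \id$ on the subvariety and that $\rho$ is idempotent with image $\fV$. Concretely: $\rho$ fixes $\fV$ pointwise since $f$ does, so $\rho(\Omega) \supseteq \fV$; and a convexity/derivative argument (using that $\Delta f(\bX,\bX)$ is a complete contraction at each fixed point $\bX$, as in the Corollary following the Ruan-axioms Proposition, together with smoothness of $\fV(n)$) shows the image is contained in $\fV$ and that $\rho \circ \rho = \rho$. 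Each of these manipulations is performed on every level $n$ simultaneously using the \emph{same} subsequence, and since $f$ respects direct sums and similarities, so does every $f^{\circ\, n}$, hence so does the limit $\rho$; thus $\rho \colon \Omega \to \Omega$ is a bona fide nc holomorphic retraction onto $\fV$.

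The main obstacle is the idempotency and the identification $\rho(\Omega) = \fV$ in the nc setting: in Vigué's proof this uses, on each level, that a convergent subsequence of iterates of a self-map of a bounded convex domain whose fixed-point set is a smooth retract actually converges (after extraction) to the retraction onto that fixed-point set, and the argument leans on the infinitesimal rigidity at fixed points and on the smoothness hypothesis $\fV(n)$ being a submanifold. I would isolate this as a lemma: for a bounded convex $W \subset \C^N$ and analytic $f \colon W \to W$ with $\fix(f) = V$ a smooth analytic retract, there is a subsequence $f^{\circ\, n_k}$ converging to the holomorphic retraction onto $V$ — which is exactly Vigué's theorem applied levelwise. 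The nc-specific content is then purely formal: check that the subsequence can be chosen uniformly in $n$ (possible because, e.g., one can diagonalize over $n$, or because relative bifullness lets one reduce level $m$ to level $n$ for $n \geq m$), and that the three nc axioms pass to limits, which they do since they are closed conditions (as noted in Section \ref{sec:ncfunc}). Finally, the bifullness hypothesis on $\fV$ is what guarantees that the levelwise retractions $\rho_n$ assemble into an nc function with image exactly the nc set $\fV$, rather than some larger or smaller graded subset; I would invoke it precisely at the point of verifying $\rho(\Omega) = \fV$ as nc sets.
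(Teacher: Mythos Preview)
Your proposal has a genuine gap in the hard direction. You attempt to extract a retraction onto $\fV=\fix(f)$ directly from a subsequence of iterates $f^{\circ n_k}$, and you even isolate this as a lemma: for bounded convex $W$ and $f\colon W\to W$ with $\fix(f)=V$ a smooth retract, some subsequence $f^{\circ n_k}$ converges to the holomorphic retraction onto $V$. That lemma is false. Take $W=\D$ and $f(z)=e^{i\theta}z$ with $\theta/\pi$ irrational; then $\fix(f)=\{0\}$, but every subsequential limit of $f^{\circ n}$ is again a rotation, never the constant map $0$, which is the only retraction onto $\{0\}$. The Bedford-style trick with $m_k=n_{k+1}-n_k$ that you sketch does produce a retraction $\psi$, but onto $\fix(\psi)$, which can strictly contain $\fix(f)$: in the example one can arrange $f^{\circ m_k}\to\id$, giving $\fix(\psi)=\D$. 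So neither the iterates nor the derivative-contraction argument you invoke forces the image down to $\fV$.

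The paper, following Vigu\'e, repairs this by first forming the Ces\`aro averages $\varphi_m=\frac{1}{m}\sum_{k=0}^{m-1} f^{\circ k}$, which are self-maps of $\Omega$ because each $\Omega(n)$ is convex. Averaging kills the rotational ambiguity (in the example $\varphi_m\to 0$). One passes to a subsequential limit $\varphi$, which still fixes $\fV$ and is a self-map by tautness; then one takes a subsequential limit $\psi$ of the iterates of $\varphi$ and refines to get $\psi\circ\psi=\psi$. Vigu\'e's argument (his Theorem~6.5) now shows $\psi_n(\Omega(n))\subset\fV(n)$ levelwise. The purely nc bookkeeping you outline---choosing a single diagonal subsequence across levels and noting that the nc axioms are closed conditions so pass to limits---is correct and is exactly what the paper does; what is missing from your proposal is the averaging step.
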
 
\begin{proof}
Just like classically, one direction is easy. If there exists an nc holomorphic retraction $\psi \colon \Omega \to \fV$, then $\fV$ is the fixed point set of $\psi$. Moreover, every fixed point set is level-wise smooth by the classical theorem of Vigu\'{e} and is bifull by basic properties of nc functions.

For the second direction, we can apply Vigu\'{e}'s proof of his theorem and only verify small details. Let $f$ be an nc map, so $\fV$ is the fixed point set of $f$. Write $f_0 = \mathrm{id}_U$ and $f_n = f \circ f_{n-1}$, for $n\in\N$. Consider the sequence $\varphi_m = \frac{1}{m} \sum_{k=0}^{m-1} f_k$. Since each $\Omega(n)$ is convex, each $\varphi_m$ is a self-map of $\Omega$. Passing to a subsequence, we may assume that $\varphi_m$ converges level-wise on compacta to an nc function $\varphi$. The fixed point set of $\varphi$ contains $\fV$. Again, since each $\Omega(n)$ is convex and bounded, it is taut, and thus $\varphi$ is also a self-map of $\Omega$. Now let $\psi$ be a partial limit of the sequence of iterates of $\varphi$. Refining the subsequence, if necessary, we see that $\psi$ is a retraction. Arguing as in the proof of \cite[Theorem 6.5]{Vigue-tams}, we see that level-wise $\psi_n$ maps $\Omega(n)$ into $\fV(n)$ and we are done.
\end{proof}

\begin{rem}
One can also argue level-wise instead of discussing nuclearity. Since on each level the Montel theorem holds, if we have a sequence of nc functions $f_n \colon \Omega \to \Omega$, then we can choose a subsequence $f_{n_k^{(1)}}$ that converges on the first level and a subsequence of it $f_{n_k^{(2)}}$ that converges on the second level and so on. Now we set $g_k = f_{n_k^{(k)}}$, and this sequence converges on all levels. The limit is, of course, an nc function.
\end{rem}

\begin{cor} \label{cor:no_level_1_fixed}
Let $\Omega$ be a bounded matrix convex set, and let $f \colon \Omega \to \Omega$ be a nc map. Then the fixed point set of $f$ contains scalar points. In other words, if $f$ has no fixed points on the first level, then $f$ has no fixed points.
\end{cor}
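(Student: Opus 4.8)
The plan is to reduce the statement to the Noncommutative Vigué theorem (Theorem \ref{thm:nc_vigue}) by analyzing the fixed point set of $f$ and deriving a contradiction from the assumption that it misses the first level. First I would let $\fV = \fix(f)$ denote the (global) fixed point set. If $\fV$ is empty there is nothing in it to constrain, but the cleaner route is: suppose for contradiction that $\fV(1) = \emptyset$. Running the averaging argument from the proof of Theorem \ref{thm:nc_vigue} --- set $\varphi_m = \frac{1}{m}\sum_{k=0}^{m-1} f^{\circ k}$, pass to a levelwise-convergent subsequence with limit $\varphi$, which is again a self-map of $\Omega$ since each $\Omega(n)$ is convex, bounded, hence taut --- and then take a partial limit $\psi$ of the iterates of $\varphi$, we obtain an nc holomorphic retraction $\psi\colon \Omega \to \Omega$ with image contained in $\fix(f) = \fV$. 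In particular $\fix(\psi) = \psi(\Omega) \subseteq \fV$, so $\psi(\Omega)(1) \subseteq \fV(1) = \emptyset$, i.e. $\psi$ has no fixed points on the first level.

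Next I would observe that $\psi_1 = \psi|_{\Omega(1)}$ is a holomorphic retraction of the bounded convex domain $\Omega(1) \subset \C^d$ onto the analytic subvariety $\psi_1(\Omega(1))$. A holomorphic retract of a bounded domain in $\C^d$ is never empty --- $\psi_1(\Omega(1))$ is a nonempty set, and every point of it is a fixed point of $\psi_1$. This already contradicts $\fix(\psi)(1) = \emptyset$. So actually the contradiction is immediate once we know $\psi$ is a retraction, because a retraction onto a nonempty target always has fixed points on level one.

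Alternatively, and perhaps more transparently, I would avoid Vigué's theorem and argue directly: since $\Omega$ is a bounded matrix convex set containing (after a translation, or working with the point $0 \in \Omega$) $0$ on every level, the Earle--Hamilton / Cesàro averaging argument shows that $\varphi_1 = \lim \frac{1}{m}\sum f_1^{\circ k}$ exists along a subsequence and is a holomorphic self-map of $\Omega(1)$ whose fixed point set contains $\fix(f)(1) = \emptyset$ --- but a holomorphic self-map of a bounded convex domain need not have fixed points, so this by itself is not yet a contradiction; one must iterate $\varphi$ to get a retraction, and a retract is nonempty. So the two arguments coincide: the essential input is that the limit retraction $\psi$ has nonempty image on every level.

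The main obstacle is making sure the averaging-and-iterating construction of $\psi$ goes through without the smoothness hypothesis of Theorem \ref{thm:nc_vigue} --- here we do not need $\fV$ to be a smooth subvariety, only that $\fix(f)$ is a nonempty nc set closed under direct sums, which is automatic for any nc self-map. Concretely, one must check that $\varphi$ is still a self-map of $\Omega$ (tautness of each $\Omega(n)$, which holds since $\Omega(n)$ is bounded and convex, by \cite[Proposition 2.1.8]{Abate-book}), that the iterates of $\varphi$ have a levelwise partial limit $\psi$ (Montel, as in the Remark after Theorem \ref{thm:nc_vigue}), and that $\psi$ is genuinely a retraction onto a subset of $\fix(f)$ --- this is exactly the content of the cited step from \cite[Theorem 6.5]{Vigue-tams}, and it is purely levelwise, so it transfers verbatim. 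Once $\psi$ is in hand, $\psi(\Omega(1))$ is a nonempty holomorphic retract of $\Omega(1)$, every one of its points is fixed by $\psi$ and hence by $f$, contradicting $\fix(f)(1)=\emptyset$. Therefore $\fix(f)$ meets the first level, and since $f$ respects direct sums, any scalar fixed point $\zeta \in \Omega(1)$ gives the family of fixed points $\{\zeta^{\oplus n}\}_n$.
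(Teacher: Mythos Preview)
Your proposal is correct and takes essentially the same approach as the paper: both obtain an nc holomorphic retraction $\psi$ onto $\fix(f)$ via Theorem \ref{thm:nc_vigue} (you unpack the averaging-and-iterating construction, the paper simply cites the theorem), and then use that $\psi$ is graded to conclude $\fix(f)(1) = \psi(\Omega(1)) \neq \emptyset$. Your worry about the smoothness hypothesis is unnecessary, since the proof of Theorem \ref{thm:nc_vigue} already notes that every fixed point set is levelwise smooth by classical Vigu\'e; the one point you gloss over---why tautness forces $\varphi$ (and then $\psi$) to land in $\Omega$ rather than $\partial\Omega$ on level $1$ when you only know $\fV(n)\neq\emptyset$ for some $n>1$---is handled by the direct-sum argument in the proof of Theorem \ref{thm:nc_bedford}.
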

\begin{proof}
Let $\fV$ be the collection of fixed points of $f$. By the nc Vigu\'{e} theorem, we have that $\fV$ is the image of some nc holomorphic retraction. Since nc maps are graded, we have that $\fV(1) \neq \emptyset$.
\end{proof}

In \cite{Bedford}, Bedford proved an important dichotomy for converging iterates of self-maps of a taut domain. The following result is a straightforward generalization of Bedford's theorem to the setting of uniformly bounded matrix convex domains.

\begin{thm}[Noncommutative Bedford theorem] \label{thm:nc_bedford}
Let $\Omega \subset \M_d$ be a uniformly bounded matrix convex domain. Let $f \colon \Omega \to \Omega$ be an nc self-map. Let $f^{\circ n} = \underbrace{f\circ f\circ \cdots \circ f}_{n \text{ times}}$ be the sequence of iterates of $f$. Suppose the sequence $f^{\circ n_k}$ converges to $g$ level-wise on compacta. In that case, we have that either for every $n \in \N$, $g(\Omega) \subset \partial \Omega$ or there exists an nc analytic subvariety of $\fV \subset \Omega$, an nc automorphism $\varphi \colon \fV \to \fV$ and a holomorphic retraction $\psi \colon \Omega \to \fV$, such that $g = \varphi \circ \psi$.
\end{thm}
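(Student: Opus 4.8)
The plan is to reduce the noncommutative statement to the classical Bedford dichotomy applied level by level, using the nc machinery (nuclearity/Montel for nc functions, the nc Vigu\'e theorem) to glue the levels together coherently. First I would recall Bedford's classical theorem: for a taut domain $W$, if a subsequence $f^{\circ n_k}$ of iterates converges to $g$, then either $g(W)\subset\partial W$, or $g$ is a retraction composed with an automorphism of its image, and in the latter case the image is an analytic retract. Each level $\Omega(n)$ is a bounded convex domain in $\C^{N(n)}$, hence taut by \cite[Proposition 2.1.8]{Abate-book}, so Bedford applies to $f_n^{\circ n_k}\to g_n$ on every level.

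The key point is that the dichotomy is \emph{simultaneous} across levels. I would argue: suppose $g_n(\Omega(n))\subset\partial\Omega(n)$ for one value of $n$; I claim it then holds for all $n$. If $\bX\in\Omega(m)$ with $m<n$, then $\bX^{\oplus k}\in\Omega(mk)$ for suitable $k\geq n$... more cleanly, use that $\Omega$ is matrix convex so any $\Omega(m)$ sits inside $\Omega(mn)$ via ampliation $\bX\mapsto I_n\otimes\bX$, and $g$ respects ampliations ($g(I_n\otimes\bX)=I_n\otimes g(\bX)$ since $g$ respects direct sums). A boundary point of $\Omega(mn)$ pulls back: if $I_n\otimes g(\bX)\in\partial\Omega(mn)$ but $g(\bX)\in\Omega(m)$ interior, one gets a contradiction with matrix convexity (the ampliation of an interior point is interior, as one can fit a small ball). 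Conversely if $g_n$ is not boundary-valued for any one $n$, then by Bedford $g_n$ is $\varphi_n\circ\psi_n$ with $\psi_n$ a retraction onto a retract $\fV(n)$; the complementary implication of the level-wise argument forces non-boundary behavior at every level. Hence we are in the second case on all levels simultaneously.

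Next I would assemble the global objects. Define $\fV=\bigsqcup_n\fV(n)$ where $\fV(n)=g_n(\Omega(n))$; one checks $\fV$ is an nc subset (closed under direct sums because $g$ is, and level-wise an analytic retract hence a smooth analytic subvariety) and relatively bifull by the standard nc-function arguments used in the proof of Theorem \ref{thm:nc_vigue}. To get the retraction $\psi\colon\Omega\to\fV$: following Bedford/Vigu\'e, consider a further partial limit of the iterates of $g$ (or of the averaged maps $\frac1m\sum f_k$ as in the proof of Theorem \ref{thm:nc_vigue}); using the nc Montel theorem (nuclearity of $\cT(\Omega)$), extract a subsequential limit $\psi$ which is an nc map, and refine so that $\psi\circ\psi=\psi$ and $\psi(\Omega)=\fV$. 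The map $\varphi$ is then defined on $\fV$ by $\varphi=g|_{\fV}$; level-wise Bedford tells us $\varphi_n$ is an automorphism of $\fV(n)$, and the graded/direct-sum-respecting structure makes $\varphi$ an nc automorphism of $\fV$. Finally $g=\varphi\circ\psi$ is checked level-wise, where it is exactly Bedford's conclusion, so it holds as an identity of nc maps.

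The main obstacle I expect is the simultaneity of the dichotomy — making rigorous that ``boundary-valued on one level'' forces ``boundary-valued on all levels,'' and dually for the non-boundary case, which requires a clean lemma that ampliation $\bX\mapsto I_k\otimes\bX$ sends interior points of $\Omega(n)$ to interior points of $\Omega(kn)$ and boundary to boundary (for a matrix convex domain). Granting that, everything else is a level-wise application of the classical Bedford theorem packaged through the nc formalism already developed (Montel via nuclearity, the retract/bifullness bookkeeping from the proof of Theorem \ref{thm:nc_vigue}), with the only genuinely nc input being that all the limiting maps automatically respect direct sums and similarities and are therefore nc maps, subvarieties, retracts, and automorphisms in the nc sense.
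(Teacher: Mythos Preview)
Your dichotomy argument is correct and matches the paper's. The gap is in the second half: you construct $\psi$ as a partial limit of iterates of $g$ (or via Vigu\'e's averaging) and then assert that ``$g=\varphi\circ\psi$ is checked level-wise, where it is exactly Bedford's conclusion.'' This does not follow. Level-wise Bedford produces $g_n=\varphi_n\circ\psi_n^{\mathrm{Bed}}$ for a \emph{specific} retraction $\psi_n^{\mathrm{Bed}}$, and retractions onto a given retract $\fV(n)$ are not unique, so there is no reason your nc $\psi$ coincides with $\psi_n^{\mathrm{Bed}}$ on each level. Equivalently, $g=\varphi\circ\psi$ amounts to $g\circ\psi=g$, and a generic partial limit of $g^{\circ m}$ has level-wise form $\eta_n\circ\psi_n^{\mathrm{Bed}}$ where $\eta_n$ is a partial limit of the automorphisms $\varphi_n^{m}$; you would need $\eta_n=\id_{\fV(n)}$ for all $n$, which ``refine so that $\psi\circ\psi=\psi$'' does not by itself arrange. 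The averaging alternative fails outright: Vigu\'e's construction retracts onto $\fix(f)$, which is in general strictly smaller than $g(\Omega)$ --- take $f$ an irrational rotation of $\D$, where $\fix(f)=\{0\}$ but $g(\D)=\D$.

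The paper sidesteps this by rerunning Bedford's original argument directly at the nc level rather than quoting it level by level. After refining so that $m_k=n_{k+1}-n_k$ and $\ell_k=n_{k+1}-2n_k$ are increasing, nc Montel gives $\psi=\lim f^{\circ m_k}$ and $\tau=\lim f^{\circ \ell_k}$ as nc maps. The identities $\psi\circ g=g\circ\psi=g$ and $\tau\circ g=g\circ\tau=\psi$ are then immediate from $f^{\circ n_{k+1}}=f^{\circ m_k}\circ f^{\circ n_k}$ and $f^{\circ m_k}=f^{\circ\ell_k}\circ f^{\circ n_k}$, so the factorization $g=\varphi\circ\psi$ with $\varphi=g|_{\fV}$ and $\fV=\fix(\psi)$ is built in rather than verified after the fact. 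A chain-rule comparison of $\ker Dg_n$ and $\ker D\psi_n$ (using $\tau$) shows $\psi$ is a retraction onto $\fV$, and $\tau|_{\fV}$ inverts $\varphi$. Because the index sequences $m_k$, $\ell_k$ are chosen once, independently of the level, all limits are automatically nc and no gluing step is needed.
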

\begin{proof}
Due to tautness on every level, $g_n(\Omega(n))$ is either a subset of $\partial \Omega(n)$ or of $\Omega(n)$. Now assume that there exists $n_0 \in \N$, such that $g_{n_0}(\Omega(n_0)) \subset \partial \Omega(n_0)$. Let $\bX \in \Omega(1)$, then $I_{n_0} \otimes g(\bX) = g(I_{n_0} \otimes \bX) \in \partial \Omega(n_0)$. Since $\Omega$ is closed under direct sums and direct summands, $g(\bX) \in \partial \Omega(1)$. By tautness $g(\Omega(1)) \subset \partial \Omega(1)$. Thus, for every $n \in \N$, $I_n \otimes g(\bX) \in \partial \Omega(n)$ and applying tautness again we conclude that $g_n(\Omega(n)) \subset \partial \Omega(n)$.

Assume that $g(\Omega) \subset \Omega$. The proof proceeds as the proof of \cite[Theorem 1.1]{Bedford}. We record the proof here for the sake of completeness. Passing to subsequences, we may assume that $m_k = n_{k+1} - n_k$ and $\ell_k = n_{k+1} - 2 n_k$ are strictly increasing, and the sequences $f^{\circ m_k}$ and $f^{\circ \ell_k}$ converge (using the fact that the space of all level-wise holomorphic nc functions is a Montel space). Denote
\[
\lim_{k\to\infty} f^{\circ m_k} = \psi,\, \lim_{k\to \infty} f^{\circ \ell_k} = \tau.
\]
Since $f^{\circ m_k} \circ f^{\circ n_k} = f^{\circ n_{k+1}} = f^{\circ  n_k} \circ f^{\circ m_k}$, we have that $\psi\circ g = g = g \circ \psi$. Similarly, $\tau \circ g = \psi = g \circ \tau$. In particular, $\psi$ fixes the image of $g$. Applying the argument above we conclude that $\psi(\Omega) \subset \Omega$ and $\tau(\Omega) \subset \Omega$. Set $\fV = \fix(\psi)$ and note that $g(\Omega) \subset \fV$. For every $n \in \N$ and every $\bZ \in \Omega(n)$, we have $\ker  D g_n(\bZ) = \ker D\psi_n(\bZ)$ since by the chain rule $Dg_n(\psi_n (\bZ)) D\psi_n(\bZ) = Dg_n(\bZ)$ and $D\psi_n (\bZ) = D\tau_n (g_n(\bZ)) D g_n(\bZ)$. Note that $\fW(n) = \psi_n^{-1}(\fV(n))$ is an analytic subvariety of $\Omega(n)$ cut out by $\psi(\psi(\bZ)) - \psi(\bZ)$. The dimension of the fiber of $\fW(n)$ over every point $\bZ \in g_n(\Omega(n))$ is precisely $dn^2 - \dim \ker D\psi_n(\bZ) = dn^2 - \dim \ker Dg_n(\bZ)$. Thus $\fW(n)$ is of dimension $dn^2$, and we conclude that $\psi$ is a holomorphic retract with image $\fV$. Hence $\fV$ is a smooth nc analytic subvariety of $\Omega$.

Lastly note that for every $\bZ \in \fV$, $\tau(g(\bZ)) = g(\tau(\bZ)) = \psi(\bZ) = \bZ$. Hence $g(\fV) = \fV$ and $g|_{\fV}$ is an automorphism of $\fV$ with inverse $\tau|_{\fV}$. Set $\varphi = g|_{\fV}$ and thus $g = \varphi \circ \psi$.
\end{proof}

\subsection{Application to the isomorphism problem}

Recall that $H^{\infty}(\fB_d)$ is the operator algebra of bounded nc analytic functions on $\fB_d$. Moreover, $H^{\infty}(\fB_d)$ is the multiplier algebra of the noncommutative reproducing kernel Hilbert space of the nc Szego kernel. This implies that $H^{\infty}(\fB_d)$ is a dual operator algebra \cite{3S1}. The Hilbert space that $H^{\infty}(\fB_d)$ acts upon can be identified with the full Fock space $\cF^2_d$ \cite{BMV-ncrkhs,3S1}. Thus, $H^{\infty}(\fB_d)$ is unitarily equivalent to the weak-* closed subalgebra of operators on the Fock space $\cF^2_d$, generated by the left creation operators (the regular free semigroup algebra in the language of Davidson and Pitts \cite{DavPit-alg, DavPit-NPint, DavPit-inv}). 

Given a subset of bounded nc functions $S\subset H^{\infty}(\fB_d)$ we consider the nc analytic subvariety cut out by $S$:
\[
\fV(S) = \left\{ \bZ \in \fB_d \mid \forall f \in S,\, f(\bZ)= 0 \right\}.
\]
It is immediate that $\fV(S) = \fV(\cI)$, where $\cI$ is the weak-* closed two-sided ideal generated by $S$. Conversely, given a set of points $\fS \subset \fB_d$, we define
\[
\cI(\fS) = \left\{ f \in H^{\infty}(\fB_d) \mid f|_{\fS} = 0 \right\}.
\]
Clearly, $\cI(\fS)$ is weak-* closed two sided ideal in $H^{\infty}(\fB_d)$. In this section, we say that $\fV \subset \fB_d$ is an nc analytic subvariety if $\fV(\cI(\fV)) = \fV$ (in other words, $\fV$ is Zariski closed in the appropriate sense).

Given a non-empty analytic subvariety $\fV \subset \fB_d$, we have that $H^{\infty}(\fB_d)/\cI(\fV)$ is the algebra of bounded nc analytic functions on $\fV$. It can also be identified with the left multiplier algebra of an appropriate nc reproducing kernel Hilbert space on $\fV$. We will denote this algebra by $H^{\infty}(\fV)$.

The following results are necessary for our discussion:

\begin{thm}[{\cite[Theorem 6.12, Corollary 6.14]{3S1}}] \label{thm:3s1_isom}
Let $\fV \subset \fB_d$ and $\fW \subset \fB_e$ be two nc analytic varieties. Let $\alpha \colon H^{\infty}(\fV) \to H^{\infty}(\fW)$ be a completely isometric isomorphism. Then there exists nc maps $g \colon \fB_e \to \fB_d$ and $f \colon \fB_d \to \fB_e$, such that
\[
f \circ g|_{\fW} = \operatorname{id}_{\fW},\quad g \circ f|_{\fV} = \operatorname{id}_{\fV}.
\]
Moreover, composition with $g$ implements $\alpha$.
\end{thm}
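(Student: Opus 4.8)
The statement is \cite[Theorem 6.12, Corollary 6.14]{3S1}; here is the shape of the argument I would follow. Recall that $H^{\infty}(\fV)=H^{\infty}(\fB_d)/\cI(\fV)$ is a dual operator algebra, realized as the left multiplier algebra of an nc reproducing kernel Hilbert space on $\fV$; the images $z_1|_{\fV},\dots,z_d|_{\fV}$ of the coordinate functions form a row contraction in $M_{1,d}(H^{\infty}(\fV))$, and the nc polynomials in them are weak-$*$ dense in $H^{\infty}(\fV)$. First I would set $g=(g_1,\dots,g_d)$, taking $g_j$ a representative in $H^{\infty}(\fB_e)$ of $\alpha(z_j|_{\fV})\in H^{\infty}(\fW)$, chosen with $(g_1,\dots,g_d)$ row-contractive (possible by Nevanlinna--Pick interpolation in $H^{\infty}(\fB_e)$, since $\alpha$ is completely contractive). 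Then $g$ is a row-contractive nc analytic map $\fB_e\to\overline{\fB_d}$, and the maximum modulus principle for the row ball \cite[Lemma 6.1]{3S1} gives $g(\fB_e)\subset\fB_d$ unless $g$ is constant --- the latter forcing $H^{\infty}(\fW)=\C$, hence the trivial case in which $\fW$, and by symmetry $\fV$, is the set of ampliations of a single scalar point. Symmetrically, $\alpha^{-1}$ produces $f\colon\fB_d\to\fB_e$.

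Next I would prove that $\alpha$ is implemented by composition with $g$. For an nc polynomial $p$, multiplicativity gives $\alpha\big(p(z|_{\fV})\big)=p(g)|_{\fW}=(p\circ g)|_{\fW}$. Since the nc polynomials are weak-$*$ dense in $H^{\infty}(\fV)$, since $\alpha$ is weak-$*$ continuous (a completely isometric isomorphism of dual operator algebras with unique preduals is automatically normal), and since $h\mapsto(h\circ g)|_{\fW}$ is weak-$*$ continuous, this identity propagates to give $\alpha(h)=(h\circ g)|_{\fW}$ for every $h\in H^{\infty}(\fV)$. Applying this with the class of an $h\in\cI(\fV)$, which is zero in $H^{\infty}(\fV)$, yields $(h\circ g)|_{\fW}=0$, i.e.\ $h(g(w))=0$ for all $w\in\fW$ and all $h\in\cI(\fV)$. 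Since $\fV$ is an nc analytic variety in the sense of this section, $\fV=\fV(\cI(\fV))$, so this forces $g(\fW)\subset\fV$; symmetrically $f(\fV)\subset\fW$.

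Finally, running the same reasoning for $\alpha^{-1}$ shows it is composition with $f$, so for every $h\in H^{\infty}(\fV)$
\[
h=\alpha^{-1}\big(\alpha(h)\big)=\big(\alpha(h)\circ f\big)\big|_{\fV}=\big((h\circ g)\circ f\big)\big|_{\fV}=h\circ(g\circ f)\big|_{\fV},
\]
and taking $h=z_j|_{\fV}$ for $j=1,\dots,d$ identifies the $j$-th coordinate of $g\circ f$ with that of the identity on $\fV$; hence $g\circ f|_{\fV}=\operatorname{id}_{\fV}$, and symmetrically $f\circ g|_{\fW}=\operatorname{id}_{\fW}$.

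The step I expect to be the main obstacle is the propagation from the polynomial identity to $\alpha(h)=(h\circ g)|_{\fW}$ for all $h$: it relies on weak-$*$ density of the polynomials in $H^{\infty}(\fV)$ and on automatic weak-$*$ continuity of $\alpha$, which in turn uses that these multiplier algebras have unique preduals --- the technical core of \cite{3S1}. A secondary issue needing care is the choice of row-contractive representatives and the maximum-modulus control of where $g$ and $f$ land, together with the degenerate constant case.
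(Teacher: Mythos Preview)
The paper does not contain a proof of this theorem; it is quoted verbatim from \cite[Theorem~6.12, Corollary~6.14]{3S1} and used as a black box in the proof of Theorem~\ref{thm:nc_DRS}. Your sketch is a reasonable outline of the argument in \cite{3S1}, so there is nothing to compare against here.
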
 

\begin{thm}[{\cite[Theorem 3.12]{Sha18}}] \label{thm:fixed_points}
Let $f \colon \fB_d \to \fB_d$ be an nc map, such that $f(0) = 0$. Let $\fix(f)$ be the set of fixed points of $f$. If $V \subset \C^d$ is a linear subspace, such that $V \cap \fB_d(1) = \fix(f)(1)$, then for every $n \in \N$ $\fix(f)(n) = \left(V \otimes M_n \right) \cap \fB_d(n)$.
\end{thm}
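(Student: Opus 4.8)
The plan is to establish, for every $n$, the identity $\fix(f)(n)=\ker\!\big(Df_n(0)-I\big)\cap\fB_d(n)$ and then to identify this kernel with $V\otimes M_n$. Write $L=\Delta f(0,0)=Df_1(0)\in B(\C^d)$; by the ampliation identity $\Delta f(I_n\otimes\bX,I_n\otimes\bX)=I_{M_n}\otimes\Delta f(\bX,\bX)$ the derivative of $f_n$ at the origin is $L^{(n)}:=I_{M_n}\otimes L$ on $M_n(\C^d)\cong M_n\otimes\C^d$, so that $\ker(L^{(n)}-I)=M_n\otimes\ker(L-I)$. At level $1$ one has $\fB_d(1)=\BB_d$ and $f_1(0)=0$, so the classical Cartan rigidity theorem for the ball \cite{Rudin-ball_book} gives $\fix(f)(1)=\ker(L-I)\cap\BB_d$; comparing with the hypothesis $\fix(f)(1)=V\cap\BB_d$ and using that two linear subspaces of $\C^d$ coinciding near $0$ are equal, we get $V=\ker(L-I)$. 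So it suffices to prove $\fix(f)(n)=\ker(L^{(n)}-I)\cap\fB_d(n)$ for $n\ge 2$. The subtlety is that for $n\ge 2$ the domain $\fB_d(n)$ is a matrix ball, which carries non-affine holomorphic retracts, so the classical argument does not run levelwise and the noncommutative coherence of $f$ must enter.

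The first reduction is to a retraction. The set $\fix(f)$ is a relatively bifull nc subset of $\fB_d$ that is levelwise smooth (classical Vigu\'e), so by Theorem \ref{thm:nc_vigue} it is the image of an nc holomorphic retraction $\psi\colon\fB_d\to\fB_d$; since $\psi$ is produced as a subsequential limit of Ces\`aro means of the iterates $f^{\circ k}$ followed by a limit of iterates of that mean — all fixing $0$ — we have $\psi(0)=0$, $\psi\circ\psi=\psi$ and $\operatorname{im}\psi=\fix(\psi)=\fix(f)$. Thus we may replace $f$ by $\psi$ and assume $f$ is an nc retraction fixing $0$. Then $\psi_1$ is a holomorphic retraction of $\BB_d$ fixing $0$, hence (classical, \cite{Rudin-ball_book}) the orthogonal projection $P$ onto $V=\operatorname{im}P$; after a unitary change of coordinates we take $V$ to be a coordinate subspace, so that $\pi:=I_{M_\bullet}\otimes P$ is a completely contractive nc self-map of $\fB_d$ restricting to the identity on the sub-ball $\fB_V:=\pi(\fB_d)$.

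The heart of the matter is an nc Cartan uniqueness principle: if $h\colon\fB_E\to\fB_E$ is nc with $h(0)=0$ and $Dh_1(0)=\operatorname{id}$, then $h=\operatorname{id}$ — proved by taking the least degree $k\ge 2$ of a nonvanishing homogeneous part of the Taylor--Taylor series of $h$, observing that the degree-$k$ part of $h^{\circ m}$ equals $m$ times that of $h$, and bounding it independently of $m$ by the Cauchy estimates for bounded nc functions, forcing it to vanish. Applying this to $\pi\circ\psi|_{\fB_V}\colon\fB_V\to\fB_V$ (nc, fixes $0$, level-$1$ derivative $P|_V=\operatorname{id}_V$) gives $\pi\circ\psi|_{\fB_V}=\operatorname{id}_{\fB_V}$, so $\psi_n(\bX)=\bX+\Phi_n(\bX)$ for $\bX\in\fB_V(n)$ with $\Phi_n(\bX)\in(\ker P)\otimes M_n$, $\Phi_n(0)=0$. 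The Schwarz lemma on the Banach ball $\fB_d(n)$ gives $\|\bX+\Phi_n(\bX)\|\le\|\bX\|$; since $V$ is a coordinate subspace the row norm splits orthogonally along $V$ and $\ker P$, so $\|\bX+\Phi_n(\bX)\|^2=\big\|\bX\bX^*+\Phi_n(\bX)\Phi_n(\bX)^*\big\|\ge\|\bX\|^2$, whence $\Phi_n(\bX)\Phi_n(\bX)^*=0$ whenever $\bX\bX^*$ is scalar, i.e. whenever $\bX=sA$ with $A$ a row co-isometry. As the scaled row co-isometries are not contained in any proper complex subvariety of $\fB_V(n)$ (this reduces to the maximal total reality of $U(n)$ in $M_n$), the holomorphic map $\Phi_n$ vanishes identically, so $\psi_n|_{\fB_V(n)}=\operatorname{id}$ and $\fB_V(n)\subseteq\fix(\psi)(n)$. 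Since $\fix(\psi)(n)$ is a connected complex submanifold of $\fB_d(n)$ of dimension $\dim\ker(L^{(n)}-I)=n^2\dim V=\dim\fB_V(n)$, we conclude $\fix(f)(n)=\fix(\psi)(n)=\fB_V(n)=(V\otimes M_n)\cap\fB_d(n)$.

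The main obstacle is the last paragraph: propagating the level-$1$ linearity of the retraction to all levels. Knowing $\psi_1$ is linear does not make $\psi$ linear (distinct nc monomials agree at level $1$), so one must genuinely exploit both the idempotency $\psi\circ\psi=\psi$ (through the nc Cartan uniqueness principle) and the self-map property on every level (through the Schwarz lemma and the orthogonal splitting of the row norm). The delicate point is excluding the possibility that $\fix(f)(n)$ is a holomorphically twisted graph over $(V\otimes M_n)\cap\fB_d(n)$ — precisely where the matrix-ball pathologies are killed by the ambient noncommutative structure — and one must be careful that the classical inputs (Cartan rigidity, affineness of ball retracts) are used only at level $1$.
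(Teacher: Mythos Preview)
The paper does not contain a proof of this statement: Theorem~\ref{thm:fixed_points} is quoted from \cite[Theorem~3.12]{Sha18} and used as a black box (the paragraph following it is only a reformulation, not an argument), so there is nothing in the present paper to compare your proof against.

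On its own merits your argument is essentially correct, and it is a pleasant use of the paper's own nc Vigu\'e theorem (Theorem~\ref{thm:nc_vigue}) to pass from $f$ to an nc retraction $\psi$ with the same fixed-point set and $\psi(0)=0$, followed by the nc Cartan uniqueness principle on $\pi\circ\psi|_{\fB_V}$, the levelwise Schwarz inequality combined with the orthogonal splitting of the row norm along $V\oplus V^{\perp}$, and finally a dimension/connectedness count to upgrade $\fB_V(n)\subseteq\fix(\psi)(n)$ to equality. One step is misstated but, fortunately, unused: you assert that a holomorphic retraction of $\BB_d$ fixing $0$ must \emph{equal} the orthogonal projection $P$ onto its image. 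That is false --- for instance $(z_1,z_2)\mapsto(z_1+\tfrac12 z_2^{2},\,0)$ is a nonlinear holomorphic retraction of $\BB_2$ onto the $z_1$-axis --- but your subsequent argument only needs that $\psi_1$ fixes $V\cap\BB_d$ pointwise (so that $D(\pi\circ\psi)_1(0)|_V=\id_V$), while $\pi$ is defined via the orthogonal projection $P$ independently of $\psi_1$. The uniqueness-set claim for scaled row co-isometries in $\fB_V(n)$ is also correct; a clean way to carry out the reduction you indicate is to freeze $X_2,\ldots,X_k$ with $\sum_{j\ge2}X_jX_j^{*}<I$, observe that the admissible $X_1$ then fill out $A\cdot U(n)$ for an invertible $A$, invoke the maximal total reality of $U(n)\subset M_n$ to get vanishing in $X_1$, and finish by analytic continuation in the remaining variables.
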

This theorem can be read differently. Recall that for an analytic self-map of $\fB_d(1)$ that fixes the origin, the fixed point set coincides with the fixed subspace of $f'(0)$ intersected with $\fB_d(1)$. Since for every $n \in \N$, $\Delta f(0_n,0_n) = f'(0) \otimes I_{M_n}$ is the derivative of $f_n$ at $0_n$. We see that the fixed point set of $f_n$ is precisely the set of fixed points of $\Delta f(0_n,0_n)$ intersected with $\fB_d(n)$.

Now with these theorems at hand, we can prove the following theorem, which is a strengthening of both \cite[Theorem 4.1]{Sha18} and \cite[Theorem 4.5]{DRS15}. Recall that for $S \subset \M_d$, the matrix span of $S$ is $\matsp(S) = \sqcup_{n \in \N} \matsp(S)(n)$, where
\[
\matsp(S)(n) = \operatorname{Span}\left\{ (I_d \otimes T)(\bX) \mid \bX \in S(n),\, T \in\cL(M_n)\right\}.
\]

\begin{thm} \label{thm:nc_DRS}
Let $\fV \subset \fB_d$ and $\fW \subset \fB_e$ be two nc analytic varieties. If $H^{\infty}(\fV)$ and $H^{\infty}(\fW)$ are completely isometrically isomorphic, then there exists $k \in \N$ and an automorphism $F\colon \fB_k \to \fB_k$, such that $\fV, \fW \subset \fB_k$ and $F$ maps $\fV$ onto $\fW$.
\end{thm}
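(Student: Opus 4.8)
The plan is to mimic the commutative argument of Davidson--Ramsey--Shalit \cite[Theorem 4.5]{DRS15}, using Theorem \ref{thm:3s1_isom} to turn the algebraic isomorphism into geometric maps, and then using the nc Bedford and Vigu\'{e} machinery of Section \ref{sec:nc_vigue} together with Theorem \ref{thm:fixed_points} to ``straighten'' those maps into a single automorphism. First I would invoke Theorem \ref{thm:3s1_isom}: the completely isometric isomorphism $\alpha \colon H^{\infty}(\fV) \to H^{\infty}(\fW)$ produces nc maps $g \colon \fB_e \to \fB_d$ and $f \colon \fB_d \to \fB_e$ with $f\circ g|_{\fW} = \id_{\fW}$ and $g\circ f|_{\fV} = \id_{\fV}$. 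Set $k = d+e$, embed $\fB_d$ and $\fB_e$ as the obvious ``corners'' of $\fB_k$, and replace $f,g$ by the maps $\widetilde f, \widetilde g \colon \fB_k \to \fB_k$ that apply $f$ (resp.\ $g$) on the respective coordinates; concretely one wants an nc self-map $h \colon \fB_k\to\fB_k$ whose restriction to $\fB_d$ is $g\circ f$-like and which carries $\fV$ into $\fB_e$ in the right way. (This is exactly the embedding trick used in \cite{DRS15} and in \cite[Theorem 4.1]{Sha18}, just carried out level-wise; because everything respects direct sums and unitary similarities, the resulting maps are genuine nc maps.)

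Next I would study $h = \widetilde g \circ \widetilde f \colon \fB_k \to \fB_k$, which restricts to the identity on $\fV$ and so has $\fV$ contained in its fixed-point set. After a preliminary automorphism of $\fB_k$ I may assume $h(0) = 0$ (move a scalar fixed point to the origin, using that $\aut(\fB_k)$ acts transitively on $\fB_k(1)$). Now $\fV$ sits inside the fixed-point set of $h$, and by Theorem \ref{thm:fixed_points} the fixed-point set of $h$ is of the form $(V \otimes M_n) \cap \fB_k(n)$ for a linear subspace $V \subset \C^k$; thus $\fV$ is contained in a ``linear slice'' $\fB_V := (V\otimes\M)\cap\fB_k$, and similarly $\fW$ is contained in a linear slice coming from $\widetilde f\circ\widetilde g$. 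The pair $(\widetilde f,\widetilde g)$ then restricts to mutually inverse nc maps between these linear slices carrying $\fV$ to $\fW$, so after identifying each slice with $\fB_{\dim V}$ via a coordinate unitary, I would be reduced to: $\fV,\fW$ are nc analytic varieties in $\fB_m$ (some $m \le k$) and there are nc maps $F\colon\fB_m\to\fB_m$, $G\colon\fB_m\to\fB_m$ with $F\circ G|_{\fW}=\id$, $G\circ F|_{\fV}=\id$, $F(0)=G(0)=0$, and $F$ restricted to the ambient linear slice is already bijective. To finish I would apply the Cartan-type rigidity built into Theorem \ref{thm:fixed_points} / Corollary \ref{cor:no_level_1_fixed}: since $G\circ F$ fixes $\fV$ pointwise and fixes $0$, its derivative at $0$ fixes a subspace containing $\matsp(\fV)$; iterating $G\circ F$ and passing to a Ces\`aro/subsequence limit (nc Bedford, Theorem \ref{thm:nc_bedford}, using that $\fB_m$ is taut level-wise and is a Montel space of nc functions) produces an nc retraction $\psi$ onto a smooth nc subvariety through $0$, and the argument of \cite[Theorem 4.1]{Sha18} shows $F$ itself, possibly after composing with a linear automorphism correcting $\Delta F(0,0)$ on the complementary subspace, is an automorphism of $\fB_m$ taking $\fV$ onto $\fW$. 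Finally pull back by the preliminary automorphism and re-embed into $\fB_k$.

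The main obstacle I anticipate is the step of promoting the biholomorphic pair $(F,G)$ between the varieties $\fV$ and $\fW$ (which a priori only satisfy $F\circ G = \id$ on $\fW$, not everywhere) to an honest \emph{automorphism} of the ambient ball --- in the commutative case this is where one needs the fixed-point/retraction analysis and a Cartan uniqueness argument, and here one must check that the nc versions go through: that $\Delta(G\circ F)(0,0)$ is diagonalizable with the ``fixed'' block equal to the identity and the complementary block a contraction one can correct by a genuine nc automorphism of $\fB_m$, and that the resulting corrected map is globally injective and surjective. This requires the full strength of Theorem \ref{thm:fixed_points} (to know the fixed-point sets are exactly linear slices, hence that $\matsp(\fV) = V\otimes\M$) together with the nc Bedford dichotomy to rule out the ``boundary'' alternative --- and the boundary alternative is ruled out precisely because $G\circ F$ has interior fixed points, namely all of $\fV$. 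A secondary technical point is checking that the various corner embeddings $\fB_d,\fB_e\hookrightarrow\fB_k$ and the maps built from $f,g$ really are nc (respect direct sums and similarities); this is routine but must be spelled out.
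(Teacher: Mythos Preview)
Your skeleton matches the paper's: invoke Theorem \ref{thm:3s1_isom} to get $f,g$, study $h=g\circ f$, produce a scalar fixed point via Corollary \ref{cor:no_level_1_fixed}, move it to the origin, and apply Theorem \ref{thm:fixed_points}. From that point on, however, the paper is much shorter, and the ``main obstacle'' you flag is not an obstacle at all.

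The paper's simplification is to first invoke \cite[Lemma 8.2]{3S1} and assume $\matsp(\fV)(n)=\C^d\otimes M_n$ and $\matsp(\fW)(n)=\C^e\otimes M_n$ for $n\gg 0$. There is then no need for the $\fB_{d+e}$ embedding: one works with $h=g\circ f\colon\fB_d\to\fB_d$ directly. After moving a scalar fixed point to $0$, Theorem \ref{thm:fixed_points} says $\fix(h)=\fB_V$ for a linear $V\subset\C^d$; since $\fV\subset\fix(h)$ and $\fV$ matrix-spans $\C^d$, necessarily $V=\C^d$ and $g\circ f=\id_{\fB_d}$. Symmetrically $f\circ g=\id_{\fB_e}$, so $d=e$ and $f$ itself is already the desired automorphism. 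The only input from Section \ref{sec:nc_vigue} is Corollary \ref{cor:no_level_1_fixed}; nc Bedford, Ces\`aro averaging, and ``correcting $\Delta F(0,0)$ on a complementary subspace'' are never used.

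Even if you forgo the \cite[Lemma 8.2]{3S1} reduction, your endgame is over-engineered. Once $\fix(g\circ f)=\fB_V$ is a linear slice, $f(0)$ is a scalar fixed point of $f\circ g$, so a second automorphism on the $\fB_e$ side arranges $\fix(f\circ g)=\fB_W$ to be a linear slice as well. For $\bZ\in\fB_V$ one has $(f\circ g)(f(\bZ))=f((g\circ f)(\bZ))=f(\bZ)$, hence $f(\fB_V)\subset\fB_W$, and symmetrically $g(\fB_W)\subset\fB_V$; since $g\circ f|_{\fB_V}=\id$ and $f\circ g|_{\fB_W}=\id$ \emph{by definition} of the fixed sets, $f|_{\fB_V}$ is automatically a biholomorphism onto $\fB_W$ with inverse $g|_{\fB_W}$. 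So the pair is already an automorphism between the slices and no iteration, retraction, or derivative-correction argument is required.
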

\begin{proof}
Applying \cite[Lemma 8.2]{3S1} we can may assume that for $n >> 0$, we have that   $\matsp(\fV)(n) = \C^d \otimes M_n$ and $\matsp(\fW)(n) = \C^e \otimes M_n$. Let $g \colon \fB_e \to \fB_d$ and $f \colon \fB_e \to \fB_d$ be the nc maps guaranteed by Theorem \ref{thm:3s1_isom}. Set $h = g \circ f$ and observe that $\fV \subset \fix(h)$. However, by Corollary \ref{cor:no_level_1_fixed}, $\fix(h)(1) \neq \emptyset$. Pre- and post-composing with automorphisms of $\fB_d$ (they induce unitary equivalence on $H^{\infty}(\fB_d)$), we may assume that $0 \in \fix(h)(1)$. Thus, by Theorem \ref{thm:fixed_points}, there exists a subspace $V \subset \C^d$, such that $\fix(h)(n) = V \otimes M_n$. As mentioned above, $\fix(h)(n)$ coincides with the fixed points of the linear operator $\Delta f(0_n,0_n) = f'(0) \otimes I_{M_n}$. Therefore, $\matsp(\fV)(n) \cap \fB_d(n) \subset \fix(h)(n)$. Thus, by assumption on $\fV$, $\fix(h)(n) = \C^d \otimes M_n$, for every $n \in \N$. Applying the same argument to $f \circ g$, we see that $d =e$, and $f$ and $g$ are automorphisms of $\fB_d$ that map $\fV$ onto $\fW$ and vice versa.
\end{proof}

\section{Horospheres in generalized balls} \label{sec:horospheres}

Let us assume that $Q(0) = 0$ and $Q$ is nc analytic. For every $\bZ \in \D_Q$,
\[
\tdel(0,\bZ) = \left\| D_{Q(\bZ)}^{-1/2} Q(\bZ) \right\|.
\]
Let $T = Q(\bZ) Q(\bZ)^*$, then
\[
\tdel(0,\bZ)^2 =   \left\| T(I - T)^{-1} \right\| = \sup_{x \in \sigma(T)} \frac{x}{1-x}.
\]
Since the function $\frac{x}{1-x}$ is monotonically increasing on $[0,1)$, $\tdel(0,\bZ)^2 = \frac{\|T\|}{1 - \|T\|} = \frac{\|Q(\bZ)\|^2}{1 - \|Q(\bZ)\|^2}$. 

\begin{lem}
Let $T$ be a positive operator with $\|T\| < 1$, then $(1 - \|T\|)(1 - T)^{-1} \leq 1$.
\end{lem}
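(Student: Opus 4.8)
The plan is to reduce this operator inequality to the elementary scalar fact that a positive operator $T$ satisfies $\sigma(T)\subseteq[0,\|T\|]$, using the continuous functional calculus. First I would observe that since $T\geq 0$ and $\|T\|<1$, we have $1-T\geq(1-\|T\|)I>0$, so $1-T$ is positive and invertible, $(1-T)^{-1}$ is a positive operator commuting with $T$, and therefore $(1-\|T\|)(1-T)^{-1}$ is itself a positive operator. Thus the claim is equivalent to showing that the spectrum of this operator lies in $(0,1]$.

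The cleanest route is a conjugation argument. Let $S=(1-T)^{1/2}$, which is positive and invertible (as $1-T>0$). Conjugating by the invertible self-adjoint operator $S^{-1}$ preserves the Loewner order, so
\[
(1-\|T\|)(1-T)^{-1}\leq I
\quad\Longleftrightarrow\quad
(1-\|T\|)I\leq S^2=1-T
\quad\Longleftrightarrow\quad
T\leq\|T\|\,I .
\]
The last inequality is immediate: $T$ is positive, so $\sigma(T)\subseteq[0,\|T\|]$, and hence $\|T\|I-T\geq 0$ by the functional calculus applied to the function $t\mapsto\|T\|-t$ on $\sigma(T)$.

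Alternatively one can argue purely spectrally, without introducing $S$: since $(1-T)^{-1}$ is a continuous function of $T$, the spectral mapping theorem gives that the spectrum of $(1-\|T\|)(1-T)^{-1}$ equals $\{(1-\|T\|)/(1-x) : x\in\sigma(T)\}$; because $0\leq x\leq\|T\|<1$ we get $0<(1-\|T\|)/(1-x)\leq 1$, and a self-adjoint operator with spectrum contained in $(0,1]$ is $\leq I$.

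I do not expect a real obstacle here: the only points requiring (routine) care are that conjugation by the invertible positive operator $S$ preserves the inequality, and that all functional-calculus manipulations are legitimate — which they are, since everything in sight commutes and is a norm-continuous function of the single self-adjoint operator $T$. One could even bypass the functional calculus entirely by noting that for any unit vector $\xi$, $\langle T\xi,\xi\rangle\leq\|T\|$, which already gives $T\leq\|T\|I$, and then run the conjugation step.
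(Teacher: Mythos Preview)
Your proof is correct, and your conjugation argument is exactly the paper's approach: the paper starts from $T\leq\|T\|I$, hence $1-T\geq(1-\|T\|)I$, and then conjugates by $(1-T)^{-1/2}$ to obtain $(1-\|T\|)(1-T)^{-1}\leq I$. Your spectral alternative is a mild variation that reaches the same conclusion.
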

\begin{proof}
Since $T \leq \|T\|$, then $1 - T \geq 1 - \|T\|$. Now multiplying on left and right by $(1-T)^{-1/2}$, we get $1 \geq (1 - \|T\|)(1-T)^{-1}$.
\end{proof}

\begin{cor}
For every $\bZ \in \D_Q$, then $\tdel(0,\bW)^{-1} \tdel(\bZ,\bW)$ is bounded as $\bW$ approaches a point on the boundary.
\end{cor}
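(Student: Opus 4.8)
The plan is to bound $\tdel(\bZ,\bW)$ from above by a constant depending only on $\bZ$ times $\bigl(1 - \|Q(\bW)\|^2\bigr)^{-1/2}$, and then to divide by the exact value $\tdel(0,\bW) = \|Q(\bW)\|\,\bigl(1 - \|Q(\bW)\|^2\bigr)^{-1/2}$ computed just above. First I would apply Lemma \ref{lem:D_Q_nc_lempert} to write
\[
\tdel(\bZ,\bW) = \bigl\| D_{Q(\bZ)}^{-1/2}\bigl(Q(\bZ) - Q(\bW)\bigr) D_{Q(\bW)^*}^{-1/2}\bigr\|
\]
and estimate the right-hand side by the product of three operator norms. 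The factor $\bigl\|D_{Q(\bZ)}^{-1/2}\bigr\|$ is a finite constant independent of $\bW$, since $\bZ \in \D_Q$ makes $D_{Q(\bZ)} = I - Q(\bZ)Q(\bZ)^*$ strictly positive; the middle factor satisfies $\|Q(\bZ) - Q(\bW)\| \le \|Q(\bZ)\| + 1$ because $\|Q(\bW)\| < 1$ for $\bW \in \D_Q$; and for the last factor I would use the lemma immediately preceding this corollary, applied to the positive contraction $T = Q(\bW)^*Q(\bW)$, to get $D_{Q(\bW)^*}^{-1} \le \bigl(1 - \|Q(\bW)\|^2\bigr)^{-1} I$, hence $\bigl\|D_{Q(\bW)^*}^{-1/2}\bigr\| \le \bigl(1 - \|Q(\bW)\|^2\bigr)^{-1/2}$.

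Setting $C_{\bZ} = \bigl(1 + \|Q(\bZ)\|\bigr)\bigl\|D_{Q(\bZ)}^{-1/2}\bigr\|$, this gives $\tdel(\bZ,\bW) \le C_{\bZ}\bigl(1 - \|Q(\bW)\|^2\bigr)^{-1/2}$, and therefore
\[
\frac{\tdel(\bZ,\bW)}{\tdel(0,\bW)} \le \frac{C_{\bZ}}{\|Q(\bW)\|}.
\]
It then remains to control the denominator as $\bW$ approaches a boundary point $\bW_0$. By continuity of $Q$ one has $\|Q(\bW)\| \to \|Q(\bW_0)\|$; since $\bW_0 \in \partial\D_Q$, the operator $Q(\bW_0)Q(\bW_0)^*$ is a norm limit of operators $<I$, so it is $\le I$ and has $1$ in its spectrum, i.e. $\|Q(\bW_0)\| = 1$. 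Hence $\|Q(\bW)\|$ is bounded away from $0$ for $\bW$ near $\bW_0$ (in particular $\tdel(0,\bW) \ne 0$ there, so the ratio is defined), and the displayed bound stays finite, with $\limsup_{\bW \to \bW_0}\tdel(0,\bW)^{-1}\tdel(\bZ,\bW) \le C_{\bZ}$.

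The proof is essentially this single submultiplicativity estimate together with the two displayed formulas for $\tdel(0,\cdot)$ and $\tdel(\bZ,\cdot)$; the only point that requires any attention is the last one — that $\bW$ cannot tend to a boundary point along zeros of $Q$ — and it is dispatched by the observation that boundary points of $\D_Q$ satisfy $\|Q\| = 1$. (If one wants the sharper constant, note that $C_{\bZ} = \sqrt{(1+\|Q(\bZ)\|)/(1-\|Q(\bZ)\|)}$ after simplification, but mere boundedness is all that is claimed.)
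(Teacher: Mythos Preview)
Your proof is correct and follows essentially the same approach as the paper: both combine the explicit formula $\tdel(0,\bW)=\|Q(\bW)\|(1-\|Q(\bW)\|^2)^{-1/2}$ with Lemma~\ref{lem:D_Q_nc_lempert} for $\tdel(\bZ,\bW)$, invoke the preceding lemma on $T=Q(\bW)^*Q(\bW)$ to control the $D_{Q(\bW)^*}^{-1/2}$ factor, and use $\|Q(\bW)\|\to 1$ at the boundary. The only cosmetic difference is that the paper keeps $D_{Q(\bZ)}^{-1/2}(Q(\bZ)-Q(\bW))$ together and applies the operator inequality $(1-\|Q(\bW)\|^2)D_{Q(\bW)^*}^{-1}\le I$ inside the norm, whereas you split into three norms via submultiplicativity; the resulting bounds differ only by harmless constants.
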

\begin{proof}
Note that
\[
\tdel(0,\bW)^{-1} \tdel(\bZ,\bW) = \\ \sqrt{\frac{1 - \|Q(\bW)\|^2}{\|Q(\bW)\|^2}} \left\| D_{Q(\bZ)}^{-1/2} (Q(\bZ) - Q(\bW)) D_{Q(\bW)^*}^{-1/2} \right\|.
\]
Note that $\|Q(\bW)\| \to 1$, as $\bW$ approaches the boundary of $\D_Q$. By the preceding lemma applied to $T = Q(\bW)^* Q(\bW)$,
\begin{multline*}
D_{Q(\bZ)}^{-1/2} (Q(\bZ) - Q(\bW))(1 - \|Q(\bW)\|^2)D_{Q(\bW)^*} (Q(\bZ)^* - Q(\bW)^*) D_{Q(\bZ)}^{-1/2} \leq \\ D_{Q(\bZ)}^{-1/2} (Q(\bZ) - Q(\bW)) (Q(\bZ)^* - Q(\bW)^*) D_{Q(\bZ)}^{-1/2}.
\end{multline*}
The right-hand side is bounded in norm, and we are done.
\end{proof}

\begin{defn}
Let $\zeta \in \partial \D_Q(1)$. We denote by $w$ points in $\D_Q(1)$. The big and small horospheres of radius $R > 0$ at $\zeta$ are
\[
F_R(\zeta)(n) = \left\{ \bZ \in \D_Q \mid \liminf_{w \to \zeta} \tdel(0,w )^{-1} \tdel(\bZ,w \otimes I_n) < R \right\},
\]
\[
E_R(\zeta)(n) = \left\{ \bZ \in \D_Q \mid \limsup_{w \to \zeta} \tdel(0,w)^{-1} \tdel(\bZ,w \otimes I_n) < R \right\}.
\]
\end{defn}
Note that since $\D_Q$ is unitary invariant and closed under direct sums, for every $\bW \in \D_Q$, $\bW \otimes I_n \in \D_Q$, since it is up to a canonical shuffle $I_n \otimes \bW = \bW^{\oplus n} \in D_Q$. Moreover, since $Q$ is nc we have that $Q(\bW \otimes I_n) = Q(\bW) \otimes I_n$. For a point $w \in \D_Q(1)$, we have $w \otimes I_n = I_n \otimes w$.

\begin{lem} \label{lem:small_horosphere_convex}
If $\D_Q$ is matrix convex, then for every $R > 0$ and $\zeta \in \partial \D_Q(1)$, $E_R(\zeta)$ is matrix convex.
\end{lem}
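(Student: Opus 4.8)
The plan is to reduce the matrix convexity of $E_R(\zeta)$ to a pointwise estimate on the nc Lempert function under the compression $\bZ \mapsto V^*\bZ V$ for an isometry $V\colon \C^m \to \C^n$. Fix such a $V$, fix $\bX \in E_R(\zeta)(n)$, and set $\bY = V^*\bX V \in \D_Q(m)$; we must show $\bY \in E_R(\zeta)(m)$, i.e. that $\limsup_{w\to\zeta}\tdel(0,w)^{-1}\tdel(\bY, w\otimes I_m) < R$. The key observation is that since $\D_Q$ is matrix convex and $V$ is an isometry, so is $V \oplus I_m$ (or rather, working at the right matrix sizes, the block isometry $V$ tensored appropriately), and the point $w\otimes I_n$ is carried to $w\otimes I_m$ under the corresponding compression: explicitly, writing $W$ for the isometry $\C^m \to \C^n$, one has $W^*(\bX) W = \bY$ and $W^*(w\otimes I_n) W = w\otimes I_m$ for a scalar $w$, because $w\otimes I_n$ is central. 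So I would apply Lemma~\ref{lem:delta_and_isom} with $\bX \in \Omega(n)$, $w\otimes I_n \in \Omega(n)$, and the isometry $W$ on both sides, together with $\bZ = \bX - w\otimes I_n$, to obtain
\[
\tdel(\bY, w\otimes I_m) = \tdel(W^*\bX W, W^*(w\otimes I_n)W) \leq \tdel(\bX, w\otimes I_n).
\]

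With that inequality in hand the rest is immediate: multiply both sides by the positive scalar $\tdel(0,w)^{-1}$, which does not depend on $\bX$ or $\bY$, and take $\limsup$ as $w\to\zeta$ inside $\D_Q(1)$. This gives
\[
\limsup_{w\to\zeta}\tdel(0,w)^{-1}\tdel(\bY, w\otimes I_m) \leq \limsup_{w\to\zeta}\tdel(0,w)^{-1}\tdel(\bX, w\otimes I_n) < R,
\]
the last strict inequality being the hypothesis $\bX \in E_R(\zeta)(n)$. Hence $\bY = V^*\bX V \in E_R(\zeta)(m)$. Since $E_R(\zeta)$ is an nc set (closed under direct sums: a direct sum of points with $\limsup < R$ again has $\limsup < R$, because $\tdel$ of a direct sum against $w\otimes I$ is the max of the two, and the max of two functions bounded by something $<R$ in the limsup is again $<R$) and it is closed under unitary conjugation (a special case of the compression statement, or directly from unitary invariance of $\tdel$), this establishes matrix convexity.

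The only genuine point requiring care—and the place I would slow down—is the bookkeeping in identifying $W^*(w\otimes I_n)W$ with $w\otimes I_m$ and $W^*\bX W$ with $V^*\bX V$ under the canonical shuffle $I_n\otimes w = w^{\oplus n}$ noted in the paragraph before the lemma; once the isometry is set up so that it acts as $V$ on the "$\C^d$-leg" and as the identity on the ampliation leg, both identities are formal. I do not expect any analytic obstacle, since Lemma~\ref{lem:delta_and_isom} already packages the monotonicity of $\delta$ under compression for matrix convex $\Omega$, and the $\limsup$ passes through the scalar factor and the inequality trivially. The corresponding statement for the big horosphere $F_R(\zeta)$ is false in general (the $\liminf$ does not behave monotonically under the inequality in the needed direction), which is why only $E_R$ is asserted to be matrix convex.
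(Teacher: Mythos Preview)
Your proof is correct and follows essentially the same route as the paper: apply Lemma~\ref{lem:delta_and_isom} with the same isometry on both slots, use $V^*(w\otimes I_n)V = w\otimes I_m$ for scalar $w$, then pass the resulting inequality through the $\limsup$; closure under direct sums comes from the max formula for $\tdel$. One small aside: your remark that $F_R(\zeta)$ fails to be matrix convex because of the compression step is not quite right---the pointwise inequality $\tdel(\bY,w\otimes I_m)\le\tdel(\bX,w\otimes I_n)$ survives $\liminf$ just as well as $\limsup$; the actual obstruction for $F_R$ is in the direct-sum step, since $\liminf\max\{f,g\}$ can exceed $\max\{\liminf f,\liminf g\}$.
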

\begin{proof}
Let $\bZ \in E_R(\zeta)(n)$ and $V \colon \C^m \to \C^n$ an isometry. Then, for $w \in \D_Q(1)$, $V^* (w \otimes I_n) V = w \otimes I_m$ and thus, by Lemma \ref{lem:delta_and_isom}, we have that $\tdel(V^*\bZ V, w \otimes I_m) \leq \tdel(\bZ, w \otimes I_n)$ and thus $V^*\bZ V \in E_R(\zeta)(n)$. Moreover, if $\bZ_1 \in E_R(\zeta)(n)$ and $\bZ_2 \in E_R(\zeta)(m)$, then 
\[
\tdel(\bZ_1 \oplus \bZ_2, w \otimes I_{n+m}) = \max\left\{ \tdel(\bZ_1, w\otimes I_n), \tdel(\bZ_2, w\otimes I_m)\right\}.
\]
Hence, $E_R(\zeta)$ is closed under direct sums.
\end{proof}

The following lemma lists some properties of the horospheres. These properties follow immediately from the definition, and we include them for the sake of completeness (see  \cite{Abate-horospheres, AbateRaissy}).

\begin{lem} \label{lem:horospheres_properties}
Fix a domain bounded $\D_Q$.
\begin{enumerate}
    \item[(i)] For all $R > 0$, $E_R(\zeta) \subset F_R(\zeta)$.
    
    \item[(ii)] For $0 < R_1 < R_2$, $\overline{E_{R_1}(\zeta)} \subset E_{R_2}(\zeta)$ and $\overline{F_{R_1}(\zeta)} \subset F_{R_2}(\zeta)$.
    
    \item[(ii)] $\D_Q = \bigcup_{R> 0} E_R(\zeta) = \bigcup_{R > 0} F_R(\zeta)$.
    
    \item[(iii)] $\bigcap_{R > 0} E_R(\zeta) = \bigcap_{R > 0} F_R(\zeta) = \emptyset$.
    
\end{enumerate}
\end{lem}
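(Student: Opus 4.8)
The statement to prove collects four elementary properties of big and small horospheres. The plan is to verify each item directly from the definitions, using only basic facts about the quantities $\tdel(0,w)^{-1}\tdel(\bZ, w\otimes I_n)$ together with the earlier lemmas.

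\textbf{Item (i).} For any fixed point $\bZ$, the function $w \mapsto \tdel(0,w)^{-1}\tdel(\bZ, w\otimes I_n)$ satisfies $\liminf_{w\to\zeta}(\cdots) \le \limsup_{w\to\zeta}(\cdots)$ trivially. Hence if the $\limsup$ is $< R$ then so is the $\liminf$, which gives $E_R(\zeta)(n)\subset F_R(\zeta)(n)$ for every $n$, i.e.\ $E_R(\zeta)\subset F_R(\zeta)$.

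\textbf{Items (ii).} There are two assertions labelled (ii). For monotonicity in $R$, suppose $0<R_1<R_2$. If $\bZ\in E_{R_1}(\zeta)$, then $\limsup_{w\to\zeta}(\cdots)<R_1<R_2$, so $\bZ\in E_{R_2}(\zeta)$, and the strict inequality gives a neighborhood of $\bZ$ still inside $E_{R_2}(\zeta)$ (using continuity of $\bZ\mapsto \tdel(\bZ,w\otimes I_n)$ in $\bZ$ for each fixed $w$, which follows from the formula in Lemma~\ref{lem:D_Q_nc_lempert}, and a Fatou-type argument to pass the $\limsup$ under the limit), whence $\overline{E_{R_1}(\zeta)}\subset E_{R_2}(\zeta)$; the same argument works for $F$. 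For the other (ii), $\D_Q = \bigcup_{R>0} E_R(\zeta)$: given $\bZ\in\D_Q(n)$, the Corollary just before the definition shows $\tdel(0,w)^{-1}\tdel(\bZ, w\otimes I_n)$ is bounded as $w\to\zeta$ (the Corollary is stated for $\bZ$ and $\bW\to$ boundary; here $w\otimes I_n$ approaches a boundary point of $\D_Q(n)$ since $\zeta\otimes I_n\in\partial\D_Q(n)$, and $Q(w\otimes I_n)=Q(w)\otimes I_n$ so $\|Q(w\otimes I_n)\|=\|Q(w)\|\to 1$), so the $\limsup$ is finite and $\bZ\in E_R(\zeta)$ for $R$ large; since $E_R\subset F_R$, also $\D_Q=\bigcup_R F_R(\zeta)$.

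\textbf{Item (iii).} For $\bigcap_{R>0}E_R(\zeta)=\bigcap_{R>0}F_R(\zeta)=\emptyset$ it suffices, since $E_R\subset F_R$, to show $\bigcap_{R>0}F_R(\zeta)=\emptyset$: if $\bZ$ lay in every $F_R(\zeta)$, then $\liminf_{w\to\zeta}\tdel(0,w)^{-1}\tdel(\bZ, w\otimes I_n)=0$. Using $\tdel(\bZ, w\otimes I_n)\ge \tdel(0,w\otimes I_n)-\tdel(0,\bZ)=\tdel(0,w)-\tdel(0,\bZ)$ by the triangle inequality for $\tdel$ (restriction of the path-distance, hence a triangle inequality holds; alternatively use that $\tdel(0,w\otimes I_n)=\tdel(0,w)$ by the direct-sum property and symmetry), and $\tdel(0,w)\to\infty$ as $w\to\zeta\in\partial\D_Q(1)$, we get $\tdel(0,w)^{-1}\tdel(\bZ,w\otimes I_n)\ge 1 - \tdel(0,\bZ)/\tdel(0,w)\to 1$, so the $\liminf$ is at least $1>0$, a contradiction. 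Hence no such $\bZ$ exists and the intersection is empty.

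\textbf{Main obstacle.} The routine-looking points are genuinely routine; the one place needing care is the topological statement $\overline{E_{R_1}(\zeta)}\subset E_{R_2}(\zeta)$ (and its $F$-analogue), because passing to the closure requires interchanging a limit in $\bZ$ with the $\limsup$ (resp.\ $\liminf$) in $w$. The clean way is to note that for each fixed $w$ the map $\bZ\mapsto \tdel(\bZ, w\otimes I_n)$ is continuous (indeed locally Lipschitz) by the explicit expression in Lemma~\ref{lem:D_Q_nc_lempert}, so $\bZ\mapsto \tdel(0,w)^{-1}\tdel(\bZ,w\otimes I_n)$ is continuous, and then $\limsup_{w\to\zeta}$ of a family of continuous functions is upper semicontinuous; upper semicontinuity of $\bZ\mapsto\limsup_{w\to\zeta}(\cdots)$ together with its value being $\le R_1 < R_2$ on $E_{R_1}(\zeta)$ yields the same bound on the closure, giving membership in $E_{R_2}(\zeta)$. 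Everything else is immediate from the definitions and the cited lemmas.
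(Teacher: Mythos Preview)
The paper does not actually prove this lemma, stating only that the properties ``follow immediately from the definition'' with a reference to the classical analogues of Abate. Your arguments for (i) and for the union statement are fine, but two of your steps contain genuine errors.

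In item (iii) you invoke a triangle inequality for $\tdel$, calling it the ``restriction of the path-distance.'' This is not correct: $\tdel$ is the nc Lempert function, and the distance $\widetilde{d}_{\Omega}$ is built \emph{from} $\tdel$ by infimising over chains, so one only has $\widetilde{d}_{\Omega}\le\tdel$; the Lempert function itself need not satisfy a triangle inequality. Your resulting bound $\liminf\ge 1$ is in fact false already in the disc: with $Q=\id$, $\zeta=1$, $z=\tfrac12$, the paper's own ball computation gives the limit $|1-z|/\sqrt{1-|z|^2}=1/\sqrt{3}<1$. A correct route uses the explicit formula of Lemma~\ref{lem:D_Q_nc_lempert} directly: since $D_{Q(\bZ)}^{-1/2}\ge I$ one obtains
\[
\tdel(\bZ,w\otimes I_n)\ \ge\ \tdel(0,w)-\frac{\|Q(\bZ)\|}{\sqrt{1-\|Q(w)\|^2}},
\]
and dividing by $\tdel(0,w)=\|Q(w)\|/\sqrt{1-\|Q(w)\|^2}$ yields $\tdel(0,w)^{-1}\tdel(\bZ,w\otimes I_n)\ge 1-\|Q(\bZ)\|/\|Q(w)\|\to 1-\|Q(\bZ)\|>0$.

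For the closure assertion in (ii) you claim that a $\limsup_{w\to\zeta}$ of functions continuous in $\bZ$ is upper semicontinuous in $\bZ$. This is false in general: the supremum over a punctured neighbourhood is \emph{lower} semicontinuous, and an infimum of LSC functions need not be USC (e.g.\ $f(w,z)=\max(0,1-|z-\sqrt{w}|/w)$ on $(0,1)\times\R$ gives $\limsup_{w\to0^+}f(w,z)=1$ for $z>0$ and $0$ at $z=0$). What is actually needed is continuity in $\bZ$ that is locally \emph{uniform} in $w$, and this is again available from the formula: writing $A=Q(\bZ)$, $A_k=Q(\bZ_k)$ and estimating the difference of norms, one finds
\[
\bigl|\tdel(0,w)^{-1}\tdel(\bZ,w\otimes I_n)-\tdel(0,w)^{-1}\tdel(\bZ_k,w\otimes I_n)\bigr|\le\frac{\|D_A^{-1/2}A-D_{A_k}^{-1/2}A_k\|}{\|Q(w)\|}+\|D_A^{-1/2}-D_{A_k}^{-1/2}\|,
\]
which tends to $0$ with $k$ uniformly for $w$ near $\zeta$. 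In Abate's classical Kobayashi-metric setting this uniformity is automatic because $z\mapsto k_D(z,w)-k_D(z_0,w)$ is $1$-Lipschitz by the triangle inequality; that mechanism is unavailable for $\tdel$.
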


\begin{example}
In the classical case of the ball, we recover the classical definition of a horosphere in the ball. Indeed, if $\zeta = \left(\begin{smallmatrix} 1 & 0 & \ldots & 0 \end{smallmatrix} \right)$. In the case of the ball $Q(\bW)$ is the operator $\bW$, hence for $w \in \BB_d$, we have $Q(w) Q(w)^* = \|w\|^2$ and $Q(w)^* Q(w)$ is a rank one operator in $M_d$.

For $z, w \in \BB_d$, 
\[
\tdel(0,w)^{-2} \tdel(z,w)^2 = \frac{1 - \|w\|^2}{\|w\|^2 (1- \|z\|^2)} (Q(z) - Q(w))(1 - Q(w)^* Q(w))^{-1} ( Q(z) - Q(w))^*
\]
Write $v = Q(w)^*$, a column vector, and let $P_v$ be the orthogonal projection on the subspace spanned by $v$. Note that $P_v = \frac{1}{\|v\|^2} v v^*$. Therefore,
\begin{multline*}
(1 - Q(w)^* Q(w))^{-1} = (1 - vv^*)^{-1} = (1 - \|v\|^2 P_v)^{-1} = \\ \sum_{n=0}^{\infty} \|v\|^{2n} P_v^n = I + \frac{\|v\|^2}{1-\|v\|^2} P_v.
\end{multline*}
Now multiplying by $1 - \|Q(w)\|^2$ we get that
\[
(1 - \|Q(w)\|^2)(1 - Q(w)^* Q(w))^{-1} = (1 - \|Q(w)\|^2) + \|Q(w)\|^2 P_{Q(w)^*}.
\]
Thus, as $w \to \zeta$, we get that $\lim_{w \to \zeta} (1 - \|Q(w)\|^2)(1 - Q(w)^* Q(w))^{-1} = P_{Q(\zeta)^*}$. Therefore,
\[
\lim_{w \to \zeta} \tdel(0,w)^{-2} \tdel(z,w)^2 = \frac{|1 - z_1|^2}{1 - \|z\|^2}.
\]
Therefore, $E_R(\zeta)(1) = F_R(\zeta)(1)$ in this case and are the classical horospheres in the ball. Moreover, note that since the row $Q(w)$ is scalar, we have that $Q(w) \otimes I_n = I_n \otimes Q(w)$. Hence we have
\begin{multline*}
(1 - \|w\|)^2 (Q(\bZ) - Q(w) \otimes I_n) (1 - Q(w)^* Q(w) \otimes I_n)^{-1} (Q(\bZ) - Q(w) \otimes I_n)^* \stackrel{w \to \zeta}{\longrightarrow} \\ (Q(\bZ) - Q(\zeta) \otimes I_n) P_{\zeta} (Q(\bZ) - Q(\zeta) \otimes I_n)^* = (I - Z_1)(I-Z_1)^*.
\end{multline*}
Hence, 
\[
\lim_{w \to \zeta} \tdel(0,w)^{-2} \tdel(\bZ,w\otimes I_n)^2 = \|D_{\bZ}^{-1/2} (I - Z_1)\|.
\]
In particular, the limit exists. Thus, $E_R(\zeta) = F_R(\zeta)$. Moreover, for $\bZ \in E_r(\zeta)$
\[
(I - Z_1)(I-Z_1)^* < R^2 (I - \bZ \bZ^*).
\]
Thus, we recover Popescu's definition \cite{Popescu-composition} of a horosphere in the free ball.
\end{example}

\begin{lem} \label{lem:triangle}
For three distinct points $\bX,\bZ, \bW \in D_Q(n)$. Then,
\[
\delta(\bX,\bW)^{-1} \delta(\bZ,\bW) \leq   \dfrac{\delta(\bZ,\bX)}{\|Q(\bW)\| - \|Q(\bX)\|} + \|D_{Q(\bZ)^*}^{-1/2} D_{Q(\bX)^*}^{1/2}\|.
\]
\end{lem}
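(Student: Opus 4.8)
The plan is to reduce everything to the explicit formula for $\tdel$ on $\D_Q$ from Lemma~\ref{lem:D_Q_nc_lempert} (here the two-argument $\delta(\cdot,\cdot)$ of the statement is $\tdel(\cdot,\cdot)$, and the denominator $\|Q(\bW)\|-\|Q(\bX)\|$ forces the implicit hypothesis $\|Q(\bW)\|>\|Q(\bX)\|$) and then run an operator-norm triangle inequality, taking care which defect operator is peeled off on which side. By symmetry of $\tdel$ (the remark after Lemma~\ref{lem:H_L_nc_lempert}) and Lemma~\ref{lem:D_Q_nc_lempert}, write $\tdel(\bZ,\bW)=\tdel(\bW,\bZ)=\|D_{Q(\bW)}^{-1/2}(Q(\bW)-Q(\bZ))D_{Q(\bZ)^*}^{-1/2}\|$ and split $Q(\bW)-Q(\bZ)=(Q(\bW)-Q(\bX))+(Q(\bX)-Q(\bZ))$, so $\tdel(\bZ,\bW)\le T_1+T_2$ with $T_1=\|D_{Q(\bW)}^{-1/2}(Q(\bW)-Q(\bX))D_{Q(\bZ)^*}^{-1/2}\|$ and $T_2=\|D_{Q(\bW)}^{-1/2}(Q(\bX)-Q(\bZ))D_{Q(\bZ)^*}^{-1/2}\|$. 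Using symmetry in the first place rather than the direct formula for $\tdel(\bZ,\bW)$ is what makes the correct adjoints appear in the final estimate.

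For $T_1$, factor $D_{Q(\bZ)^*}^{-1/2}=D_{Q(\bX)^*}^{-1/2}\bigl(D_{Q(\bX)^*}^{1/2}D_{Q(\bZ)^*}^{-1/2}\bigr)$; submultiplicativity, Lemma~\ref{lem:D_Q_nc_lempert}, and $\|C^*\|=\|C\|$ (both defect operators are self-adjoint) give $T_1\le\tdel(\bX,\bW)\,\|D_{Q(\bZ)^*}^{-1/2}D_{Q(\bX)^*}^{1/2}\|$, which is $\tdel(\bX,\bW)$ times the last term of the claim. So the crux is to bound $T_2$ by $\tdel(\bX,\bW)\tdel(\bZ,\bX)/(\|Q(\bW)\|-\|Q(\bX)\|)$. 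Factoring $D_{Q(\bW)}^{-1/2}=\bigl(D_{Q(\bW)}^{-1/2}D_{Q(\bX)}^{1/2}\bigr)D_{Q(\bX)}^{-1/2}$ gives $T_2\le\|D_{Q(\bW)}^{-1/2}D_{Q(\bX)}^{1/2}\|\,\tdel(\bX,\bZ)$; since $0\le D_{Q(\bX)}\le I$ and $D_{Q(\bW)}^{-1}\le(1-\|Q(\bW)\|^2)^{-1}I$ we get $\|D_{Q(\bW)}^{-1/2}D_{Q(\bX)}^{1/2}\|^2=\|D_{Q(\bX)}^{1/2}D_{Q(\bW)}^{-1}D_{Q(\bX)}^{1/2}\|\le(1-\|Q(\bW)\|^2)^{-1}$, hence $T_2\le\tdel(\bX,\bZ)/\sqrt{1-\|Q(\bW)\|^2}$.

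The remaining task, and the main obstacle, is to absorb the factor $(1-\|Q(\bW)\|^2)^{-1/2}$: one needs the sharp lower bound $\tdel(\bX,\bW)\ge(\|Q(\bW)\|-\|Q(\bX)\|)/\sqrt{1-\|Q(\bW)\|^2}$ — the naive estimate $\tdel(\bX,\bW)\ge\|Q(\bX)-Q(\bW)\|\ge\|Q(\bW)\|-\|Q(\bX)\|$ is too weak. Because $D_{Q(\bX)}^{-1/2}\ge I$, one has $\tdel(\bX,\bW)\ge\|(Q(\bW)-Q(\bX))D_{Q(\bW)^*}^{-1/2}\|\ge\|Q(\bW)D_{Q(\bW)^*}^{-1/2}\|-\|Q(\bX)D_{Q(\bW)^*}^{-1/2}\|$ by the reverse triangle inequality. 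The subtracted term is $\le\|Q(\bX)\|/\sqrt{1-\|Q(\bW)\|^2}$, while the identity $Q(\bW)D_{Q(\bW)^*}^{-1}Q(\bW)^*=D_{Q(\bW)}^{-1}-I$ — the power-series identity from the proof of Lemma~\ref{lem:D_Q_nc_lempert} applied to $T=Q(\bW)^*$ — gives $\|Q(\bW)D_{Q(\bW)^*}^{-1/2}\|^2=\|D_{Q(\bW)}^{-1}-I\|=\|Q(\bW)\|^2/(1-\|Q(\bW)\|^2)$; subtracting yields exactly the claimed lower bound. Thus retaining the factor $D_{Q(\bW)^*}^{-1/2}$ and computing its interaction with $Q(\bW)$ exactly, rather than discarding it, is precisely what produces the denominator $\|Q(\bW)\|-\|Q(\bX)\|$ in place of the weaker $\sqrt{1-\|Q(\bW)\|^2}$.

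Feeding this lower bound into the estimate for $T_2$ gives $T_2\le\tdel(\bX,\bW)\tdel(\bZ,\bX)/(\|Q(\bW)\|-\|Q(\bX)\|)$; adding the bound for $T_1$, factoring out $\tdel(\bX,\bW)$, and dividing by it (legitimate since $\|Q(\bW)\|>\|Q(\bX)\|$ forces $\tdel(\bX,\bW)>0$) yields the inequality of the lemma.
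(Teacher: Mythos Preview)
Your proof is correct and follows essentially the same route as the paper's: insert $Q(\bX)$ and apply the triangle inequality, peel off defect factors to recover $\tdel(\bZ,\bX)$ and $\tdel(\bX,\bW)$, bound the leftover factor by $(1-\|Q(\bW)\|^2)^{-1/2}$, and then use the identical lower bound $\tdel(\bX,\bW)\geq(\|Q(\bW)\|-\|Q(\bX)\|)/\sqrt{1-\|Q(\bW)\|^2}$ obtained from $D_{Q(\bX)}^{-1/2}\geq I$ together with the exact computation of $\|Q(\bW)D_{Q(\bW)^*}^{-1/2}\|$. The only cosmetic difference is that you invoke the symmetry $\tdel(\bZ,\bW)=\tdel(\bW,\bZ)$ at the outset, which swaps which summand carries $\tdel(\bX,\bW)$ versus $\tdel(\bZ,\bX)$ and makes the adjoints in the term $\|D_{Q(\bZ)^*}^{-1/2}D_{Q(\bX)^*}^{1/2}\|$ match the statement directly.
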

\begin{proof}
By the formula in Lemma \ref{lem:D_Q_nc_lempert}
\begin{multline*}
\delta(\bZ,\bW) = \| D_{Q(\bZ)}^{-1/2} (Q(\bZ) - Q(\bW)) D_{Q(\bW)^*}^{-1/2}\| \leq \\ \|D_{Q(\bZ)}^{-1/2} (Q(\bZ) - Q(\bX))D_{Q(\bW)^*}^{-1/2}\| + \|D_{Q(\bZ)}^{-1/2} (Q(\bX) - Q(\bW))D_{Q(\bW)^*}^{-1/2}\| \leq \\ \delta(\bZ,\bX) \|D_{Q(\bX)^*}^{1/2} D_{Q(\bW)^*}^{-1/2}\| + \delta(\bX,\bW) \|D_{Q(\bZ)^*}^{-1/2} D_{Q(\bX)^*}^{1/2}\|.
\end{multline*}
Since the points are distinct $\delta(\bX,\bW) \neq 0$. Moreover, since $Q(\bX)$ is a contraction, we have that $\|D_{Q(\bX)^*}^{1/2}\| \leq 1$. Hence, we obtain the following inequality:
\[
\delta(\bX,\bW)^{-1} \delta(\bZ,\bW) \leq \delta(\bZ,\bX) \frac{\| D_{Q(\bW)^*}^{-1/2}\|}{\delta(\bX,\bW)} + \|D_{Q(\bZ)^*}^{-1/2} D_{Q(\bX)^*}^{1/2}\|.
\]
We need to bound $\frac{\| D_{Q(\bW)^*}^{-1/2}\|}{\delta(\bX,\bW)}$. By functional calculus, $\| D_{Q(\bW)^*}^{-1/2}\| = \frac{1}{\sqrt{1 - \|Q(\bW)\|^2}}$. Hence, we will bound $\sqrt{1 - \|Q(\bW)\|^2} \delta(\bX,\bW)$ from bellow. Again, since $Q(\bX)$ is a strict contraction, $(1-Q(\bX)Q(\bX)^*)^{-1} \geq 1$. Let us write $$T = D_{Q(\bX)}^{-1/2} (Q(\bX) - Q(\bW)) D_{Q(\bW)^*}^{-1/2},$$  Then,
\[
T^* T \geq D_{Q(\bW)^*}^{-1/2} (Q(\bX)^* - Q(\bW)^*)(Q(\bX) - Q(\bW)) D_{Q(\bW)^*}^{-1/2}. 
\]
Hence,
\begin{multline*}
\sqrt{1 - \|Q(\bW)\|^2} \delta(\bX,\bW) = \sqrt{1 - \|Q(\bW)\|^2} \|T\| \geq  \\ \sqrt{1 - \|Q(\bW)\|^2} \|(Q(\bX) - Q(\bW)) D_{Q(\bW)^*}^{-1/2}\| \geq \\ \sqrt{1 - \|Q(\bW)\|^2} \left( \|Q(\bW) D_{Q(\bW)^*}^{-1/2}\| - \|Q(\bX) D_{Q(\bW)^*}^{-1/2}\| \right)
\end{multline*}
Now, if $A = Q(\bW) D_{Q(\bW)^*}^{-1/2}$, then $A^* A = Q(\bW)^* Q(\bW) D_{Q(\bW)^*}^{-1}$. Thus, by functional calculus, $\|A\| =\frac{\|Q(\bW)\|}{\sqrt{1 - \|Q(\bW)\|^2}}$. Now, for the second summand, we have
\[
\sqrt{1 - \|Q(\bW)\|^2} \|Q(\bX) D_{Q(\bW)^*}^{-1/2}\| \leq \\ \sqrt{1 - \|Q(\bW)\|^2} \|Q(\bX)\| \|D_{Q(\bW)^*}^{-1/2}\| = \|Q(\bX)\|.
\]
In summary,
\[
\sqrt{1 - \|Q(\bW)\|^2} \delta(\bX,\bW) \geq \|Q(\bW)\| - \|Q(\bX)\|.
\]
Thus,
\[
\delta(\bX,\bW)^{-1} \delta(\bZ,\bW) \leq  \dfrac{\delta(\bZ,\bW)}{\|Q(\bW)\| - \|Q(\bX)\|} + \|D_{Q(\bZ)^*}^{-1/2} D_{Q(\bX)^*}^{1/2}\|.
\]
\end{proof}

\begin{thm}[Noncommutative Wolff theorem] \label{thm:nc_wolff_lemma}
Let $0 \in \Omega = \D_Q$ be a bounded matrix convex set. Let $f \colon \Omega \to \Omega$ be an nc map with no fixed points. Then there exists $\xi \in \partial \Omega(1)$, such that for every $R > 0$ and $n \in \N$, $f^{\circ n}(E_R(\xi)) \subseteq F_R(\xi)$.
\end{thm}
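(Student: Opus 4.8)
The plan is to carry over Abate's proof of his horosphere theorem, with Lemma~\ref{lem:fixed_point_scalar} playing the role of the Earle--Hamilton theorem and the triangle-type inequality from the proof of Lemma~\ref{lem:triangle} playing the role of the triangle inequality for the Kobayashi distance.

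\emph{Construction of $\xi$.} For $t\in(0,1)$ set $f_t=tf$. Because $\Omega=\D_Q$ is matrix convex with $0\in\Omega$ we have $t\,\Omega\subseteq\Omega$, so $f_t$ is an nc self-map of $\Omega$ with $f_t(\Omega)\subseteq t\,\Omega$; by Lemma~\ref{lem:fixed_point_scalar} it has a unique fixed point $x_t\in\Omega(1)$, and the fixed point of $(f_t)_n$ is $x_t\otimes I_n$. Since $\Omega(1)$ is bounded, pick $t_k\uparrow 1$ with $x_{t_k}\to\xi\in\overline{\Omega(1)}$. If $\xi$ were interior then $f(\xi)=\lim_k f(x_{t_k})=\lim_k x_{t_k}/t_k=\xi$, contradicting the absence of fixed points; hence $\xi\in\partial\Omega(1)$, so $\|Q(\xi)\|=1$.

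\emph{Core estimate.} Fix $n\in\N$ and $\bZ\in E_R(\xi)(n)$. By the distance-decreasing property of $\tdel$ (\cite[Proposition~3.2]{BelVin17}; compare the corollary after Lemma~\ref{lem:finsler_seminorm}) applied to the self-map $f_t$ at $x_t\otimes I_n$ and $\bZ$, together with $f_t(x_t\otimes I_n)=x_t\otimes I_n$ and $f_t(\bZ)=t\,f(\bZ)$, one gets $\tdel(x_t\otimes I_n,\,t f(\bZ))\le\tdel(x_t\otimes I_n,\,\bZ)$. Inserting the middle point $t f(\bZ)$ into the triangle inequality from the proof of Lemma~\ref{lem:triangle} yields
\[
\tdel(f(\bZ),\,x_t\otimes I_n)\ \le\ \tdel(f(\bZ),\,t f(\bZ))\,\big\|D_{Q(t f(\bZ))^*}^{1/2}D_{Q(x_t\otimes I_n)^*}^{-1/2}\big\|\ +\ \tdel(t f(\bZ),\,x_t\otimes I_n)\,\big\|D_{Q(f(\bZ))^*}^{-1/2}D_{Q(t f(\bZ))^*}^{1/2}\big\|.
\]
Now $f(\bZ)$ is a fixed interior point of $\D_Q(n)$, so $t f(\bZ)\to f(\bZ)$ with $\|Q(f(\bZ))\|<1$ as $t\to 1^-$; hence $\tdel(t f(\bZ),f(\bZ))\to 0$, the second defect factor tends to $1$, and the first defect factor is at most $\|D_{Q(x_t)^*}^{-1/2}\|=(1-\|Q(x_t)\|^2)^{-1/2}$. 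Combining with the previous display and dividing by $\tdel(0,x_t)=\|Q(x_t)\|(1-\|Q(x_t)\|^2)^{-1/2}$ gives, as $t\to 1^-$,
\[
\frac{\tdel(f(\bZ),\,x_t\otimes I_n)}{\tdel(0,x_t)}\ \le\ \frac{\tdel(t f(\bZ),\,f(\bZ))}{\|Q(x_t)\|}\ +\ (1+o(1))\,\frac{\tdel(\bZ,\,x_t\otimes I_n)}{\tdel(0,x_t)}.
\]
The first summand tends to $0$; the quotient $\tdel(\bZ,x_t\otimes I_n)/\tdel(0,x_t)$ is bounded by the corollary preceding the definition of the horospheres, so the factor $1+o(1)$ is harmless; and since $x_{t_k}\to\xi$ and $\bZ\in E_R(\xi)(n)$,
\[
\limsup_{k}\frac{\tdel(\bZ,\,x_{t_k}\otimes I_n)}{\tdel(0,x_{t_k})}\ \le\ \limsup_{w\to\xi}\frac{\tdel(\bZ,\,w\otimes I_n)}{\tdel(0,w)}\ <\ R.
\]
Therefore $\limsup_k \tdel(f(\bZ),x_{t_k}\otimes I_n)/\tdel(0,x_{t_k})<R$, and as $x_{t_k}\to\xi$ this forces $\liminf_{w\to\xi}\tdel(f(\bZ),w\otimes I_n)/\tdel(0,w)<R$, i.e.\ $f(\bZ)\in F_R(\xi)(n)$. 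Hence $f(E_R(\xi))\subseteq F_R(\xi)$ for every $R>0$.

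\emph{Iterates, and the main difficulty.} Each $f^{\circ n}$ is again an nc self-map of $\D_Q$, and it is fixed-point-free: a fixed point of $f^{\circ n}$ would, by Corollary~\ref{cor:no_level_1_fixed}, produce a periodic point of $f_1$ on the bounded convex domain $\D_Q(1)$, which is impossible (average along the periodic orbit and pass to a Montel limit of the Ces\`aro means; tautness of $\D_Q(1)$ forces the limit to map $\D_Q(1)$ into itself, and its values are then fixed points of $f_1$). Running the construction above for $f^{\circ n}$ produces $\xi^{(n)}\in\partial\Omega(1)$ with $f^{\circ n}(E_R(\xi^{(n)}))\subseteq F_R(\xi^{(n)})$; but $\xi^{(n)}=\lim_{t\to1^-}(\text{fixed point of }t f^{\circ n})$ is the Denjoy--Wolff point of $f_1^{\circ n}$, which equals that of $f_1$, i.e.\ $\xi^{(n)}=\xi$. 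Thus $f^{\circ n}(E_R(\xi))\subseteq F_R(\xi)$ for all $R>0$ and $n\in\N$. The one genuinely delicate point is the core estimate: as $t\to1^-$ the points $x_t\otimes I_n$ run to the boundary, so $\tdel(0,x_t)$ and $\tdel(\bZ,x_t\otimes I_n)$ blow up and the defect operators $D_{Q(x_t)^*}$ degenerate; the argument survives only because the correction factors coming from the \emph{interior} point $f(\bZ)$ stay bounded (one even tends to $1$), the small quantity $\tdel(t f(\bZ),f(\bZ))$ beats the blow-up of $\|D_{Q(x_t)^*}^{-1/2}\|$ once divided by $\tdel(0,x_t)$, and the ratio $\tdel(\bZ,x_t\otimes I_n)/\tdel(0,x_t)$ is a priori bounded, so that bookkeeping these three facts (together with the identification $\xi^{(n)}=\xi$) is where the care is needed.
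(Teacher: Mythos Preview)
Your core estimate for $n=1$ (i.e., for $f$ itself) is correct and closely parallels the paper's argument; the use of $f_t=tf$ in place of the paper's $f_m(\bZ)=f(r_m\bZ)$ is an inessential variation.

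The genuine gap is in the ``Iterates'' paragraph. You run the construction separately for each $f^{\circ n}$, obtaining a boundary point $\xi^{(n)}$ as a subsequential limit of the fixed points of $t\,f^{\circ n}$, and then assert $\xi^{(n)}=\xi$ because ``it is the Denjoy--Wolff point of $f_1^{\circ n}$, which equals that of $f_1$.'' This is unjustified: the theorem does not assume that $\Omega(1)$ is strictly convex or has simple boundary, so there is no reason for $f_1$ to have a \emph{unique} Denjoy--Wolff point, nor have you shown that a subsequential limit of the fixed points of $t f_1$ (or of $t f_1^{\circ n}$) must coincide with such a point. In general the accumulation set of $\{x_t\}$ can be large, and the accumulation set for the fixed points of $t f^{\circ n}$ can differ from it.

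The paper avoids this entirely, and the fix is easy once you see it: do not replace $f$ by $f^{\circ n}$ and re-approximate. Instead, note that the \emph{same} $x_t$ is the unique fixed point of $(tf)^{\circ n}$ for every $n$ (since $(tf)^{\circ n}(\Omega)\subset t\Omega$ and $x_t$ is already fixed by $tf$), and that $(tf)^{\circ n}\to f^{\circ n}$ pointwise as $t\to 1$. Then the distance-decreasing property gives $\tdel\big((tf)^{\circ n}(\bZ),\,x_t\otimes I_n\big)\le\tdel(\bZ,\,x_t\otimes I_n)$, and your triangle-type estimate passes from $(tf)^{\circ n}(\bZ)$ to $f^{\circ n}(\bZ)$ exactly as before. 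This produces the \emph{same} $\xi$ for all $n$, with no appeal to uniqueness of a Denjoy--Wolff point. (The paper does precisely this, with $f_m^{\circ n}$ in place of $(tf)^{\circ n}$.)

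A minor point: your sketch that a periodic point of $f_1$ forces a fixed point via ``average along the periodic orbit \ldots\ its values are then fixed points of $f_1$'' is not quite right as stated---the Ces\`aro limit need not take values in $\fix(f_1)$ directly. The correct (and standard) argument on a taut domain is that a periodic orbit prevents the iterates of $f_1$ from being compactly divergent, hence $f_1$ has a fixed point; alternatively, invoke Vigu\'e's retraction argument in full. With the fix above, however, you never need this step at all: once $(tf)^{\circ n}$ is used, the single sequence $x_{t_k}\to\xi$ handles every iterate.
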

\begin{proof}
Let $0 < r_m < 1$ be a sequence that monotonically increases to $1$. Let $f_m(\bZ) = f(r_m\bZ)$. By Lemma \ref{lem:fixed_point_scalar}, $f_m$ has a unique fixed point $w_m \in \Omega(1)$. Any accumulation point of $w_m$ in $\Omega(1)$ will be a fixed point of $f$. Hence (passing to a subsequence, if necessary), we may assume that $w_m \to \xi \in \partial \Omega$. Fix $R > 0$, and let $\bZ \in E_R(\xi)(n)$. Then
\[
\limsup_{m\to \infty} \tdel(0,w_m)^{-1} \tdel(\bZ,w_m \otimes I_n) \leq \limsup_{w \to \xi} \tdel(0,w)^{-1} \tdel(\bZ,w \otimes I_n) < R.
\]
Fix an arbitrary $\varepsilon > 0$ small enough. Then, for every $m$ big enough,
\[
\tdel(0,w_m)^{-1} \tdel(\bZ,w_m \otimes I_n) < R - \varepsilon.
\]
By \cite[Corollary 3.4]{BelVin17} and the fact that $w_m$ is a fixed point of $f_m^{\circ n}$, for every $n \in \N$, we conclude that for every $m > m_0$,
\[
\tdel(f_m^{\circ n}(\bZ), f_m^{\circ n}(w_m) \otimes I_n) = \tdel(f_m^{\circ n}(\bZ), w_m \otimes I_n) \leq \tdel(\bZ,w_m\otimes I_n) <  \tdel(0,w_m)(R - \varepsilon).
\]

Now, fix $n \in \N$. For every $\bX \in \Omega$, $\lim_{m\to\infty} f^{\circ n}_m(\bX) = f^{\circ n}(\bX)$. By the continuity of $\tdel$, we have that $\lim_{m\to\infty} \tdel(f^{\circ n}_m(\bX), f^{\circ n}(\bX)) = 0$. By Lemma \ref{lem:triangle},
\begin{multline*}
\tdel(f^{\circ n}_m(\bZ), w_m \otimes I_n)^{-1} \tdel(f^{\circ n}(\bZ), w_m \otimes I_n) \leq \\  \frac{\tdel(f^{\circ n}(\bZ),f^{\circ n}_m(\bZ))}{\|Q(w_m)\| - \|Q(f^{\circ n}_m(\bZ))\|} + \|D_{Q(f^{\circ n}(\bZ))^*}^{-1/2} D_{Q(f^{\circ n}_m(\bZ))^*}^{1/2}\|.
\end{multline*}
Since $\lim_{m\to\infty} \frac{1}{\|Q(w_m)\| - \|Q(f^{\circ n}_m(\bZ))\|} = \frac{1}{1 - \|Q(f^{\circ n}(\bZ))\|}$, we get that
\[
\limsup_{m\to\infty} \tdel(f^{\circ n}_m(\bZ), w_m \otimes I_n)^{-1} \tdel(f^{\circ n}(\bZ), w_m \otimes I_n) \leq 1.
\]
In particular, for $m >> 0$,
\[
\tdel(f^{\circ n}(\bZ), w_m \otimes I_n) < (1 +\varepsilon) \tdel(f^{\circ n}_m(\bZ), w_m \otimes I_n) \leq (1 +\varepsilon) \tdel(\bZ, w_m\otimes I_n).
\]
Hence, if we take $0< \delta < \frac{\varepsilon}{R - \varepsilon}$, then again for every $m$ big enough,
\[
\tdel(0,w_m)^{-1} \tdel(f^{\circ n}(\bZ), w_m \otimes I_n) < (1 + \delta)(R - \varepsilon) < R.
\]
This implies that
\[
\liminf_{w \to \zeta} \tdel(0,w)^{-1} \tdel(f^{\circ n}(\bZ), w \otimes I_n) \leq \\ \liminf_{m \to \infty} \tdel(0,w_m)^{-1} \tdel(f^{\circ n}(\bZ), w_m \otimes I_n) < R.
\]
Thus, by definition, $f^{\circ n}(\bZ) \in F_R(\zeta)$.
\end{proof}

\bibliographystyle{abbrv}
\bibliography{nc_denjoy_wolff}
\end{document}